\newcommand{\mdiv}{{\rm div}}
\newcommand{\R}{\mathbb{R}}
\newcommand{\E}{\mathbb{E}}
\newcommand{\cC}{\mathcal{C}}
\newcommand{\cF}{\mathcal{F}}
\newcommand{\cP}{\mathcal{P}}
\newcommand{\cL}{\mathcal{L}}
\newcommand{\PP}{\mathbb{P}}
\newtheorem{theorem}{Theorem}[section]
\newtheorem{corollary}{Corollary}[section]
\newtheorem{lemma}{Lemma}[section]
\newtheorem{proposition}{Proposition}[section]
\theoremstyle{definition}
\newtheorem{remark}{Remark}[section]
\theoremstyle{definition}
\newtheorem{definition}{Definition}[section]
\DeclareMathOperator{\Tr}{Tr}
\DeclareMathOperator{\Sp}{S_p}
\DeclareMathOperator{\argmax}{argmax}
\title{Optimal Control of Diffusion Processes with Terminal Constraint in Law}
\author{Samuel Daudin \footnote{PSL Research University, Université Paris-Dauphine, CEREMADE, Place de Lattre de Tassigny, F- 75016 Paris, France}}
\begin{document}


\maketitle

\begin{abstract}
Stochastic optimal control problems with constraints on the probability distribution of the final output are considered. Necessary conditions for optimality in the form of a coupled system of partial differential equations involving a forward Fokker-Planck equation and a backward Hamilton-Jacobi-Bellman equation are proved using convex duality techniques.
\end{abstract}

\section*{Introduction}

This paper is devoted to the study of stochastic optimal control problems with constraints on the law $\cL(X_T)$ of the controlled process at the terminal time. Our problem takes the following form~:

$$\inf_{\alpha_t \in \mathcal{A}} \mathbb{E} \left[ \int_0^T (f_1(t,X_t,\alpha_t) + f_2(t, \mathcal{L}(X_t)))dt + g(\mathcal{L}(X_T)) \right]$$

\noindent under the constraint $\Psi(\mathcal{L}(X_T)) \leq 0$ for the diffusion:

$$dX_t = b(t,X_t,\alpha_t)dt + \sqrt{2}\sigma(t,X_t,\alpha_t)dB_t$$

\noindent with the initial condition given by $\mathcal{L}(X_0) = m_0$ for some $m_0$ in  $\mathcal{P}_2(\R^d)$, the space of probability measures over $\R^d$ with finite second order moment. Here, $f_1 : [0,T] \times \R^d \times A \rightarrow \R$ and $f_2 : [0,T] \times \mathcal{P}_2(\R^d) \rightarrow \R$ are the instantaneous costs, $g : \mathcal{P}_2(\R^d) \rightarrow \R$ is the terminal cost, $\Psi : \mathcal{P}_2(\R^d) \rightarrow \R$ is the final constraint, $b : [0,T] \times \R^d \times A \rightarrow \R^d$ and $\sigma : [0,T]\times \R^d \times A \rightarrow \mathbb{S}_d(\R)$ are respectively the drift and the volatility of the controlled process $X$ and $\alpha$ and is the control process valued in the control space $A$. We look in particular for optimal Markov policies, that is control processes $(\alpha_t)$ which are optimal among all admissible controls and for which there exists some measurable function $\alpha : [0,T] \times \R^d \rightarrow A$ such that, for all $t\in [0,T]$, $\alpha_t = \alpha(t,X_t)$.

We are going to show that optimal Markov policies are related to the solutions of the following system of partial differential equations, where the unknown $(\lambda,\phi,m)$ belongs to $\R^+ \times \mathcal{C}_b^{1,2}([0,T]\times \R^d) \times \mathcal{C}^0([0,T], \mathcal{P}_2(\R^d) ) $~:

\begin{subnumcases}{}
 \displaystyle -\partial_t u(t,x) + H(t,x,Du(t,x),D^2u(t,x)) = \frac{\delta f_2}{\delta m}(t,m(t),x) & in $[0,T]\times \R^d$ \label{hjb} \\
\notag \displaystyle \partial_t m - \mdiv (\partial_p H(t,x,D u(t,x),D^2u(t,x))m) \\
\displaystyle \hspace{50pt} + \sum_{i,j} \partial_{ij}^2 ((\partial_M H(t,x,Du(t,x),D^2u(t,x)))_{ij}m) = 0 & in $[0,T]\times \R^d$ \label{fp} \\
\displaystyle u(T,x) = \lambda \frac{\delta \Psi}{\delta m}(m(T),x) + \frac{\delta g}{\delta m}(m(T),x) \mbox{     in   }  \R^d, 
m(0) = m_0  \\
 \lambda \Psi(m(T)) = 0, \mbox{    }
  \Psi(m(T)) \leq 0,  \mbox{    }
 \lambda \geq 0,
\end{subnumcases}

\noindent where $H(t,x,p,M) := \sup_{a \in A} \left \{ -b(t,x,a).p - \sigma^t\sigma(t,x,a).M- f_1(t,x,a) \right \} $ is the Hamiltonian of the system. The forward equation, Equation \ref{fp} is a Fokker-Planck equation which describes the evolution of the probability distribution $m$ of the optimally controlled process. The backward equation, Equation \ref{hjb} is an Hamilton-Jacobi-Bellman equation satisfied by the adjoint state $u$. The nonnegative parameter $\lambda$ is the Lagrange multiplier associated to the terminal constraint. The forward and backward equations are coupled through the source term for the HJB equation, the terminal condition for the HJB equation and the exclusion condition $\lambda \Psi(m(T)) = 0$.

Our main result, Theorem \ref{MainGeneral} states that, under suitable growth and regularity assumptions, optimal Markov policies $\alpha \in L^0([0,T]\times\R^d,A)$ exist and satisfy~:

$$ \alpha(t,x) \in \argmax_{a \in A} \left \{ -b(t,x,a).Du(t,x) -\sigma^t\sigma(t,x,a).D^2u(t,x)- f_1(t,x,a) \right \}$$

\noindent for some solution $(\lambda, u, m)$ of the above system of PDEs. Notice that we do not a priori require $\Psi$ to be a convex function. When $ \displaystyle \Psi(m)=\int_{\R^d} h(x) m(dx)$ for some function $h : \R^d \rightarrow \R$ and for all $m \in \cP_2(\R^d)$, we say that the constraint is linear. When the costs $f_2$ and $g$ are linear as well we recover the problem of stochastic optimal control under expectation constraint (as in \cite{Bouchard2010}, \cite{Chow2020}, \cite{Pfeiffer2020a}).

Such problems arise in economy and finance when an agent tries to minimize a cost (maximize a utility function) under constraints on the probability distribution of the final output. These types of constraints can take into account the risk given by the dispersion of the cost. There has recently been a surge of interest for this kind of problems. For instance \cite{Guo2019} and \cite{Guo2020} use similar formulations to study respectively the problem of calibration of local-stochastic volatility models and the problem of portfolio allocation with prescribed terminal wealth distribution. Probability constraints of the form $\PP \left[ h(X_T) \leq 0  \right ] \leq 1 - \epsilon $ also fall into our analysis since they can be written as functions of the law $\cL(X_T)$ of $X_T$. In state constrained problems, the constraint is directly imposed on the process $X_T$ and must be satisfied almost-surely. Such constraints might be too stringent or even impossible to satisfy and probability constraints might allow to find controls with a better reward and a controlled probability of failure/success.   

Stochastic control problems with terminal constraints have been extensively studied in the literature. Optimal control problems under stochastic target constraints have been studied in Bouchard, Elie and Imbert \cite{Bouchard2009} using the geometric dynamic programming principle proposed in Soner and Touzi \cite{Soner2002} . In Föllmer and Leukert \cite{Follmer1999}, the authors introduce the notion of quantile hedging to relax almost-sure constraints into probability constraints. In Yong and Zhou \cite{Yong1999} Chapter 3, necessary optimality conditions are proved in the form of a system of forward/backward stochastic differential equations. More recently the problem with constraints on the law of the process has been studied in Pfeiffer \cite{Pfeiffer2020} and in Pfeiffer, Tan and Zhou \cite{Pfeiffer2020a}. In these works, the authors prove that the problem can be reduced to a “standard" problem (without terminal constraint) by adding a term involving $ \lambda^* h$ ---in the case where the constraint has the form $\E \left [h(X_T) \right ] \leq 0$--- to the final cost for some optimal Lagrange multiplier $\lambda^*$.  A dual problem over the Lagrange multipliers associated to the constraints is exhibited using abstract duality results. In Pfeiffer, Tan and Zhou \cite{Pfeiffer2020a}, the authors provide necessary and sufficient optimality conditions for problems with multiple equality and inequality expectation constraints with much less restrictions on the data than we do and in a path dependent framework. However \cite{Pfeiffer2020a} needs to assume some controllability condition (Assumption $3.1.ii$) and works with a compact control set. In our framework, the corresponding controllability condition would be to assume a priori that there exist some control $\alpha$ such that $\E( h(X_T^{\alpha} )) <0$. In our analysis, we are able to prove such controllability condition when $H$ satisfies suitable assumptions.

The novelty of the present work is to provide a framework in which both controllability and existence of strong regular solutions for the Stochastic Control problem can be proved. We also believe that our necessary conditions for optimality can lead to efficient numerical methods using techniques already developed for similar kind of coupled PDE systems as in Achdou and Capuzzo Dolcetta \cite{Achdou2010}. We are also able to handle costs of mean-field type.

Our strategy is to study a relaxed problem which is an optimal control problem for the Fokker-Planck equation and then rely on the regularity of the data to show that optimal controls for the relaxed problem yield optimal controls for the original problem. The relaxed problem is the following~:

\begin{equation*}
\inf_{(m,\omega,W)} \int_0^T \int_{\R^d} L( t,x,\frac{d\omega}{dt \otimes dm}(t,x),\frac{dW}{dt \otimes dm}(t,x) )  dm(t)(x)dt + \int_0^T f_2(t,m(t))dt + g(m(T))
\end{equation*}

\noindent where 
$$L(t,x,q,N):= \sup_{(p,M) \in \R^d \times \mathbb{S}_d(\R)} \left \{ -p.q - M.N - H(t,x,p,M)     \right \} = H^*(t,x,-q,-N)$$
and the infimum is taken over the triples $(m,\omega,W) \in \mathcal{C}^0([0,T],\mathcal{P}_1(\R^d)) \times \mathcal{M}([0,T]\times \R^d,\R^d) \times  \mathcal{M}([0,T]\times \R^d,\mathbb{S}_d(\R))$ for which $\omega$ and $W$ are absolutely continuous with respect to $m(t) \otimes dt$ and $(m,\omega,W)$ satisfy in the sense of distribution the Fokker-Planck equation: 
$$ \partial_t m + \mdiv \omega -\sum_{i,j} \partial^2_{ij} W_{ij} = 0 $$ 
together with the initial condition $m(0)=m_0$ and the terminal constraint $\Psi(m(T)) \leq 0$. Notice that here and in the following, we denote by $\mathbb{S}_d(\R)$ the space of symmetric matrices of size $d$, endowed with the inner product $M.N:= \Tr(MN)$ and by $\mathcal{M}([0,T] \times \R^d, \R^d)$ (respectively by $\mathcal{M}([0,T]\times \R^d,\mathbb{S}_d(\R))$)  the space of  $\R^d$-valued (respectively $\mathbb{S}_d(\R)$-valued) Borel measures on $[0,T] \times \R^d$ with finite total variation.

In order to study the relaxed problem, we rely on duality techniques that originated in the theory of Optimal Transport (see  \cite{Rachev1998}, \cite{Villani2003}, \cite{Villani2007} and \cite{Benamou2000}) and were further developed in the theory of Mean Field Games. Indeed, when the game has a potential structure --- see for instance Lasry, Lions \cite{Lasry2007}, Cardaliaguet, Graber, Porretta and Tonon \cite{Cardaliaguet2015}, Briani and Cardaliaguet \cite{Briani2018} and Orrieri, Porretta and Savaré \cite{Orrieri2019} ---  the system of partial differential equations which describes the distribution of the players and the value function of a typical infinitesimal player can be obtained as optimality conditions for an optimal control problem for the Fokker-Planck equation. In this framework, the necessary conditions are obtained through convex duality techniques, using generally  the Fenchel-Rockafellar theorem as in \cite{Cardaliaguet2015}, \cite{Briani2018}  or the Von-Neumann theorem as in \cite{Orrieri2019}. We follow this path and --- when the final constraint as well as the costs $f_2$ and $g$ are linear --- we are able to exhibit a dual problem, which is an optimal control problem for the HJB equation involving the Lagrange multiplier $\lambda \in \R^+$ associated to the terminal constraint. It takes the following form~:

\begin{equation*}
\sup_{(\lambda,\phi)} \int_{\R^d} \phi(0,x) dm_0(x) 
\end{equation*}

\noindent where the supremum runs over the couples $(\lambda,\phi) \in \R^+ \times \mathcal{C}_b^{1,2}([0,T]\times \R^d)$ satisfying

\begin{equation*}
\left \{
\begin{array}{ll}
-\partial_t \phi(t,x) + H(t,x,D\phi(t,x),D^2 \phi(t,x)) \leq f_2'(t,x) \mbox{ in     } [0,T]\times \R^d  \\
\phi(T,x) \leq  \lambda h(x) + g'(x) \mbox{ in   } \R^d
\end{array}
\right.
\end{equation*}
and where $f_2': [0,T] \times \R^d \rightarrow \R$ and $g':\R^d \rightarrow \R$ are such that $\displaystyle f_2(t,m) = \int_{\R^d} f_2'(t,x)dm(x)$ and $\displaystyle g(m) = \int_{\R^d} g'(x)dm(x)$.

The necessary conditions for optimality then follow from the lack of duality gap between the relaxed and the dual problems. We can then address more general constraints $\Psi : \mathcal{P}_2(\R^d) \rightarrow \R$ and costs $f_2 : [0,T] \times \mathcal{P}_2 (\R^d) \rightarrow \R$, $g: \mathcal{P}_2(\R^d)$ by “linearizing" the costs and the constraint around solutions of the relaxed problem.

Using convex duality techniques to solve optimal control problems for diffusion processes is of course not new. It can be traced back at least to Fleming and Vermes \cite{Fleming1989} where the philosophy is very close to ours. In Tan and Touzi, \cite{Tan2013} the authors extend the usual Monge-Kantorovitch optimal transportation problem to a stochastic framework. The mass is transported along a continuous semimartingale and the initial and terminal distributions are prescribed. Studying optimal control problems for the Fokker-Planck equation in order to understand the stochastic control problem is less common and it seems adapted to problems where the constraints only act on the law of the process. We refer to the works of Blaquière \cite{Blaquiere1992} and more recently, Mikami \cite{Mikami2015} and Mikami and Thieullen \cite{Mikami2006} where similar approaches are developed in connection with the so-called Schrödinger problem. This approach has been followed recently by Guo, Loeper and Wang \cite{Guo2019} and Guo, Loeper, Langrené and Ning \cite{Guo2020} for problems with various expectation constraints. In both papers, the authors show that their original problem is in duality with a problem of optimal control of sub-solutions of an HJB equation. This dual problem is solved numerically. Our relaxation is in the spirit of classical works in convex analysis (see \cite{Ekeland1999a}) but usually probabilists prefer to study another relaxation of the initial problem through the martingale problem (see Stroock and Varadhan \cite{Stroock1997}), as in El Karoui, Jeanblanc-Picqué and Nguyen \cite{ElKaroui1987} or Lacker \cite{Lacker2015}. These different ways to relax the initial problem are, of course, connected and the correspondences between the diffusion processes, the martingale problem and the Fokker-Planck equation are now well established starting from the seminal work of \cite{Stroock1997} and more recently Figalli \cite{Figalli2008} and Trevisan \cite{Trevisan2016}.

Under very general assumptions, as in \cite{Fleming1989}, one is usually able to see that the original problem is in duality with a problem of optimal control of the HJB equation. However, existence of solutions for this dual problem is much harder to come by and requires particular structural conditions. Essentially, the dual problem has a solution if the Hamilton-Jacobi-Bellman equation admits a regular solution. This is of course rather difficult to obtain. Regularity results for the Hamilton-Jacobi-Bellman equation where the control appears in the volatility as in Fleming and Soner \cite{Fleming2006} Chapter $IV.4$, usually rely upon three things~: the regularity of the coefficients of the diffusion and of the costs functionals, the compactness of the control set and finally the uniform parabolicity of the equation. The last point means that there must be some $\Lambda^- >0$ such that the volatility coefficient satisfies (uniformly in the time/state/control variables)  $\sigma^t\sigma \geq \Lambda^- I_d$.  

In studying terminal constraints, compact control sets are not satisfactory since we would not be able to show, in full generality, that the constraint can indeed be reached with a finite cost. Part of the challenge of the paper is to find a framework in which the process is sufficiently “controllable" but the HJB equation is still solvable. For that we need to impose restrictions on the coefficients.

In particular, we require some growth assumptions on the Hamiltonians and its derivatives. This allows us to use the weak Bernstein method as in Ishii and Lions \cite{Ishii1990}, Barles \cite{Barles1991}, Lions and Souganidis \cite{Lions2005} and Armstrong and Cardaliaguet \cite{Armstrong2018} (among others) to prove that the viscosity solution of the HJB equation is Lipschitz in time and space. 

As it is well-known, controllability for such systems is related to the coercivity of the Hamiltonian $H$ in the momentum variable.  As we will show, imposing a strictly super-linear polynomial growth (in $p$) for $H(t,x,p,0) : = \sup_{a \in A} -b(t,x,a).p - f_1(t,x,a) $ allows to show that the agent can take (with a relaxed control) any instantaneous drift without paying too big a cost.

The rest of the paper is organized as follows : in Section \ref{sec: Statement of the problem and Hypothesis} we present our assumptions and the precise statement of the problem. We also give our main results there. In Section \ref{sec: A relaxed Problem}  we introduce and study the problem of optimal control of the Fokker-Planck equation. Our main results, Theorems \ref{MainGeneral} and \ref{MainConv} are then proved in Section \ref{sec: Proof of the Main Theorem}. Finally we give in Section \ref{sec: HJB} a detailed study of the Hamilton-Jacobi-Bellman equation which is crucial to our analysis.


\section{Main results}

\label{sec: Statement of the problem and Hypothesis}

In this section we first present our notations and our standing assumptions. Then we briefly discuss some properties of the Lagrangian $L$ and finally we state our main results.

\subsection{Notations and functional spaces}

The $d$-dimensional euclidean space is denoted by $\R^d$ and the space of real matrices of size $d$ by $\mathbb{M}_{d}(\R)$. The space of symmetric matrices of size $d \times d$ is denoted by $\mathbb{S}_d(\R)$.  The subset of $\mathbb{S}_d(\R)$ consisting of positive symmetric matrices is denoted by $\mathbb{S}^+_d(\R)$ and $\mathbb{S}^{++}_d(\R)$ is the subset of $\mathbb{S}_d(\R)$ consisting of definite-positive symmetric matrices. Recall that $\mathbb{S}^{++}_d(\R)$ is endowed with a smooth (analytic) square root~: $\sqrt{.} : \mathbb{S}^{++}_d(\R) \rightarrow \mathbb{S}^{++}_d(\R) $ (see for instance \cite{Stroock1997} Lemma 5.2.1). Sometimes we will use $\Sp(A)$ to denote the set of eigenvalues of a square matrix $A$. The euclidean space $\R^d$ is endowed with its canonical scalar product : $x.y := \sum_{i=1}^d x_iy_i $ and the associated norm $|x|^2 := \sum_{i=1}^d x_i^2$. The space $\mathbb{M}_d(\R)$ is endowed with its canonical scalar product : $M.N := \Tr (^tMN) $ and the associated norm $|M|^2 := \Tr(^tMM)$ where $\Tr(M)$ is the trace of $M$ and $^tM$ is the transpose of $M$. Sometimes we will use the operator norm on $\mathbb{M}_d(\R)$ : $\left \vvvert M \right \vvvert := \sup_{x \in \R^d} \frac{|Mx|}{|x|}$.
For two real numbers $r_1$ and $r_2$, $r_1 \wedge r_2$ is the minimum of $r_1$ and $r_2$ and $r_1 \vee r_2$ is the maximum of $r_1$ and $r_2$.
If $\eta$ is a $\sigma$-finite positive measure on a measurable space $(\Omega, \cF)$ and if $\mu$ is a $\sigma$-finite vector measure on $(\Omega, \cF)$, we write $\mu << \eta$ if $\mu$ is absolutely continuous with respect to $\eta$ and we write $\frac{d\mu}{d\eta} \in L^1(\eta)$ for the Radon-Nikodym derivative of $\mu$ with respect to $\eta$.
If $E$ is a locally compact, complete, separable metric space and $l \geq 1$ is an integer, $\mathcal{C}_0(E, \R^l) $ is the space of $\R^l$-valued continuous functions on $X$, vanishing at infinity. It is endowed with the topology of uniform convergence. Its topological dual $\left( \mathcal{C}_0(E, \R^l) \right) ^*$ can be identified thanks to Riesz theorem as the space $\mathcal{M}(E, \R^l) $ of $\R^l$-valued Borel measures with finite total variation on $E$, normed by total variation. We will often consider the weak-* topology on $ \mathcal{M}(E, \R^l)  $. When $l =1$ we simply note $\mathcal{C}_0(E) $  and $ \mathcal{M}(E)$. $\mathcal{M}^+(E) \subset \mathcal{M}(E)$ is the cone of finite non-negative measures.
The set of Borel probability measures over $E$ is denoted by $\mathcal{P}(E)$.
If $r \geq 1$, $\mathcal{P}_r(E)$ is the set of Borel probability measures over $E$ with finite moment of order $r$. It is endowed with the topology given by the Wasserstein distance $d_r$ of order $r$. If $X$ is a random variable taking values into $(\R^d, \mathcal{B}(\R^d))$, its law is denoted by $\mathcal{L}(X) \in \cP(\R^d)$.
We say that $U : \mathcal{P}_1(\R^d) \rightarrow \R$ if $\mathcal{C}^1$ if there is a bounded continuous function $\frac{\delta U}{\delta m} :\mathcal{P}_1(\R^d) \times \R^d \rightarrow \R $ such that, for any $m_1,m_2 \in \cP_1(\R^d)$,

$$ U(m_1) - U(m_2) = \int_0^1 \int_{\R^d} \frac{\delta U}{\delta m}  ( (1-t)m_2 + tm_1,x)(m_1-m_2)(dx)dt. $$

This derivative is defined up to an additive constant and we use the standard normalization convention~: $ \displaystyle \int_{\R^d} \frac{\delta U}{\delta m}(m,x)m(dx) = 0$. See \cite{Carmona2018a} for details on the notion(s) of derivatives in the space of measures.

We consider a finite, fixed horizon $T >0$.
The set of continuous functions from $[0,T]$ to $\cP(\R^d)$ and from $[0,T]$ to $\cP_r(\R^d)$ for $r\geq 1$ are respectively denoted by $\mathcal{C}^0([0,T], \cP(\R^d))$ and by $\mathcal{C}^0([0,T], \cP_r(\R^d))$.
The space of measurable functions defined on $[0,T] \times \R^d$ with values into the measurable space $Y$ is denoted by $L^0([0,T] \times \R^d, Y)$.
If $ u : [0,T] \times \R^d \rightarrow \R $ is sufficiently smooth, $Du : [0,T] \times \R^d \rightarrow \R^d$ and $D^2u : [0,T] \times \R^d \rightarrow \mathbb{S}^d(\R)$ denote respectively the differential and the Hessian of $u$ with respect to the space variable $x$.
The space of continuous functions $u$ on $[0,T] \times \R^d$ for which $\partial_t u$, $Du$ and $D^2u$ exist and are continuous is denoted by $\cC^{1,2}([0,T] \times \R^d)$ and $\cC^{1,2}_b([0,T] \times \R^d)$ is the subspace of $\cC^{1,2}([0,T] \times \R^d)$ consisting of functions $u$ for which $u$, $\partial_tu$, $Du$ and $D^2 u$ are bounded.
If $n \in \mathbb{N}^*$ and $\alpha \in (0,1)$, $\mathcal{C}_b^{n+ \alpha}(\R^d)$ is the space of bounded continuous real functions on $\R^d$ for which the first $n$-derivatives are continuous and bounded and the $n$-th derivative is $\alpha$-Hölder continuous. We say that $\phi :[0,T] \times \R^d \rightarrow \R^d$ is in $\cC_b^{\frac{n+\alpha}{2},n+\alpha}([0,T]\times \R^d)$ if $\phi$ is continuous in both variables together with all derivatives $D_t^rD_x^s\phi$ with $2r+s \leq n$. Moreover, $\|\phi \|_{ \frac{n+\alpha}{2},n+\alpha}$ is bounded where
\begin{align*}
\|\phi\|_{ \frac{n+\alpha}{2},n+\alpha} &:= \sum_{2r+s \leq n} \|D_t^rD_x^s\phi \|_{\infty} +\sum_{2r+s=n} \sup_{t \in [0,T]} \|D_t^rD_x^s \phi(t,.) \|_{\alpha} \\
&+ \sum_{0<n+\alpha -2r-s<2} \sup_{x \in \R^d} \|D_t^r D_x^s \phi(.,x) \|_{\frac{n+\alpha-2r-s}{2}}.
\end{align*}

\subsection{Assumptions}

In all the following, $A$ is a closed subset of an euclidean space, $T > 0$ is a finite horizon and $r_2 \geq r_1 > 1$ are two parameters. The conjugate exponents of $r_1$and $r_2$ are respectively denoted by $r_1^*$ and $r_2^*$. The data are:

$$(b, \sigma,f_1) : [0,T] \times \R^d \times A \rightarrow \R^d \times \mathbb{S}_+^d(\R) \times \R, $$
$$f_2:[0,T] \times \mathcal{P}_1(\R^d) \rightarrow \R, $$
$$g : \mathcal{P}_1(\R^d)  \rightarrow \R,$$
$$\Psi : \cP_1(\R^d) \rightarrow \R, $$
$$m_0 \in \mathcal{P}_{r_1^* \vee 2}(\R^d).$$

We define the Hamiltonian of the system, for all $(t,x,p,M) \in [0,T] \times \R^d \times \R^d \times \mathbb{S}^d(\R) $~:

$$ H(t,x,p,M) = \sup_{a \in A} \left\lbrace -b(t,x,a).p -\sigma(t,x,a) ^t\sigma(t,x,a) .M  - f_1(t,x,a) \right\rbrace $$

\begin{enumerate}

\item \textbf{Assumptions on $b,\sigma,f_1,f_2$ and $g$}

\begin{enumerate}
\item For all $R >0$, $b$, $\sigma$ and $f_1$ as well as the partial derivatives  $\partial_x b$, $\partial_tb$, $\partial_{xx}^2b$,  $\partial_x \sigma$, $\partial_t \sigma$, $\partial_{xx}^2 \sigma$, $\partial_x f_1$, $\partial_t f_1$, $\partial_{xx}^2 f_1$, are continuous and bounded on $[0,T] \times \R^d \times \left( A \cap B(0,R) \right) $ ;  $\partial_{x} b$ and $\partial_x \sigma$ are globally bounded.
\item $b$ has at most a linear growth and $\sigma$ satisfies $\Lambda^- I_d \leq \sigma^t\sigma(t,x,a) \leq \Lambda^+ I_d$ for some $\Lambda^+ \geq \Lambda^- >0$ uniformly in $(t,x,a)$.
\label{Uniformellipticity}
\item $f_1$ is continuous and coercive with respect to $a$: there is $\delta >0$ and $C_1,C_2>0$ such that, for all $(t,x,a)$, $f_1(t,x,a) \geq C_1|a|^{1+\delta} -C_2$.
\item $f_2$ is continuous, bounded and has one linear derivative in $m$. The first order functional derivative $\displaystyle \frac{\delta f_2}{\delta m}: [0,T] \times \mathcal{P}_1(\R^d) \times \R^d \rightarrow \R$ is globally Lipschitz continuous, bounded and $\displaystyle x \rightarrow \frac{\delta f_2}{\delta m}(t,m,x)$ belongs to $\mathcal{C}_b^{3+\alpha}(\R^d)$ with bounds uniform in $(t,m)$.
\item $g$ is continuous, bounded and has one functional derivative in $m$ such that $ \displaystyle x \rightarrow  \frac{\delta g}{\delta m}(m,x) $ belongs to $\mathcal{C}_b^{3+\alpha}(\R^d)$ with bounds uniform in $m$.
\label{HoldergAss}
\end{enumerate}
\label{IA}

\item \textbf{Assumptions on the Hamiltonian}

\label{HAss}
\begin{enumerate}

\item $H$ is $C^1$ in $(t,x,p,M)$. The partial derivatives $\partial_xH$, $\partial_p H$ and $\partial_M H$ are Lipschitz in $[0,T] \times \R^d \times B(0,R) \times B(0,R)$ for all $R>0$.
\label{C1HAss}
\item  There is some $\alpha_1,\alpha_2 >0$ and $C_H >0$ such that, for all $(t,x,p) \in [0,T] \times \R^d \times \R^d$,
$$\alpha_1 |p|^{r_1} - C_H \leq H(t,x,p,0) \leq \alpha_2 |p|^{r_2} +C_H. $$
\label{GrA}
\item $\partial_t H(t,x,p,M)$ is bounded over $[0,T] \times \R^d \times \R^d \times \mathbb{S}_d(\R)$.
\label{DerGrDtHA}
\item There is some positive constant $C_{D_pH}$ and an exponent $\nu \geq 1$ such that, for all $(t,x,p,M) \in [0,T] \times \R^d \times \R^d \times \mathbb{S}_d(\R)$
$$|D_pH(t,x,p,M)| \leq C_{D_pH}(1 + |p|^\nu).$$
\label{DerGrDpHA}
\item $D_xH$ is uniformly in $(t,x,p) \in [0,T] \times \R^d \times \R^d$ Lipschitz continuous in $M$.
\label{DerGrDxMHA}
\item 
\begin{enumerate}
\item \textbf{Either} $f_2 =0$ and the limit
$ \displaystyle \lim_{|p| \rightarrow +\infty} \frac{|p|^2 + \partial_xH(t,x,p,0).p}{H^2(t,x,p,0)} = 0 $ holds uniformly in $(t,x) \in [0,T] \times \R^d$
\label{DerGrDxHA1} 
\item \textbf{or} $f_2 \neq 0$ and there is some $C_{D_xH} >0$ such that $\displaystyle |D_xH(t,x,p,0) | \leq C_{D_xH} (1+ |p|)$.
\label{DerGrDxHA2}

\end{enumerate}
\label{DerGrDxHA}

\end{enumerate}

\item \textbf{Assumptions on the constraint $\Psi$}
\label{FCg}
\begin{enumerate}
\item $\Psi$ is continuous and admits a functional derivative such that $ \displaystyle x \rightarrow  \frac{\delta \Psi}{\delta m}(m,x) $ belongs to $\mathcal{C}^{3+\alpha}_b(\R^d)$ with bounds uniform in $m$.

\item There is at least one $m \in \mathcal{P}_1(\R^d)$ such that $\Psi(m)<0$.

\item For all $m \in \mathcal{P}_1(\R^d)$ such that $\Psi(m)=0$ there exists $x_0 \in \R^d$ such that $\displaystyle \frac{\delta \Psi}{\delta m}(m,x_0) <0$.
\label{TransCondU}
\end{enumerate}

\end{enumerate}

\begin{remark} Assumption \ref{IA} is sufficient to uniquely define the controlled process $X^{\alpha}$ for  any control $\alpha \in \mathcal{A}$ (see below for the definitions). If $A$ were compact with $f_2=0$, we would be in the setting of \cite{Fleming2006} Chapter IV.4 and these assumptions would guarantee the existence of a smooth value function (in $\cC_b^{1,2} ([0,T]\times \R^d)$). 
\end{remark}

\begin{remark} The upper bound in Assumption \ref{GrA} is a coercivity assumption on the cost $f_1$ relatively to the drift $b$. Taking the definition of $H$, we see that it is equivalent to ask that, for all $(t,x,a) \in [0,T] \times \R^d \times A$, $f_1(t,x,a) \geq \alpha_2' |b(t,x,a)|^{r_2^*} - C_H, $ for some $\alpha_2' >0$. It will be a source of compactness throughout the paper. The lower bound in Assumption \ref{GrA} is a “weak"-controllability condition and we will discuss it further in Lemma \ref{GrH_1 explained}.
\end{remark}

\begin{remark} Using the Envelope theorem (see for instance \cite{Milgrom2002}) we see that $H$ being $\mathcal{C}^1$ ---Assumption \ref{C1HAss}--- in the $p,M$-variables implies that, for any $a(t,x,p,M) \in A$ such that $H(t,x,p,M) $ $= -b(t,x,a(t,x,p,M)).p -\sigma ^t\sigma (t,x,a(t,x,p,M)) - f_1(t,x,a(t,x,p,M))$ we get $\partial_pH(t,x,p,M) =$ $-b(t,x,a(t,x,p,M))$ and  

\noindent $\partial_MH(t,x,p,M) =$ $-\sigma ^t\sigma (t,x, a(t,x,p,M))$. Consequently, drift and volatility must agree on potentially different optimal controls with common values $-\partial_pH(t,x,p,M) $ and $\sqrt{-\partial_MH(t,x,p,M) }$ respectively. Notice that the growth conditions on the cost $f_1$ and the drift $b$ ensure that for any $(t,x,p,M) \in [0,T] \times \R^d \times \R^d \times \mathbb{S}_d(\R) $, there exists at least one such $a(t,x,p,M)$ in $A$.
\end{remark}

\begin{remark} Using the envelope theorem and the uniform ellipticity condition in Assumption \ref{Uniformellipticity} we see that for all $(t,x,p,M)$, $\displaystyle \Lambda^-I_d \leq -\partial_M H(t,x,p,M) \leq \Lambda^+I_d$, a fact that we will repeatedly used throughout the paper.
\end{remark} 

\begin{remark} We use (the restrictive) Assumptions \ref{DerGrDtHA}, \ref{DerGrDpHA}, \ref{DerGrDxMHA}, \ref{DerGrDxHA} in order to find Lipschitz estimates for the solution of the Hamilton-Jacobi-Bellman equation and to deduce that it is well-posed in $\cC_b^{1,2} ([0,T]\times \R^d)$. Assumptions \ref{C1HAss} is then sufficient to show that the solution is actually in $\cC_b^{\frac{3+\alpha}{2},3+\alpha}([0,T] \times \R^d)$. When Assumption \ref{GrA} hold, Assumption \ref{DerGrDxHA2} is stronger than Assumption \ref{DerGrDxHA1} but we use it to find Lipschitz estimates which are independent from the time regularity of the source term of the HJB equation. 
\end{remark}

\begin{remark} Assumption \ref{TransCondU} is a tranversality condition. When $\Psi$ is convex, this assumption is equivalent to the existence of some probability measure $m \in \cP_1(\R^d)$ such that $\Psi(m) <0$.
\end{remark}

The following observations will be useful in order to translate the properties of the Hamiltonian $H$ into properties of the Lagrangian $L$ defined for all $(t,x,q,N) \in [0,T] \times \R^d \times \R^d \times \mathbb{S}_d(\R)$ by

$$L(t,x,q,M) := \sup_{(p,M) \in \R^d \times \mathbb{S}_d(\R) } \left\lbrace -p.q -M.N -H(t,x,p,M) \right \rbrace. $$

\noindent Taking convex conjugates in \ref{GrA} we see that this assumption can be reformulated in terms of $L$: for all $(t,x,q) \in [0,T] \times \R^d \times \R^d$,

\begin{equation}
\label{EstimateH'}
\alpha_2' |q|^{r_2^*} - C_H \leq L(t,x,q,0) \leq \alpha_1' |q|^{r_1^*} + C_H
\end{equation}

\noindent where, for $i=1,2$, $\displaystyle  \alpha_i' = \alpha_i^{-\frac{1}{r_i-1}}~(~r_i~-~1~)~r_i^{\frac{-r_i}{r_i-1}}$ and $r_i^* = \frac{r_i}{r_i-1}$ is the conjugate exponent of $r_i$.

Throughout the article, the following dual representation for $L$ will be useful. 

\begin{lemma}

\label{dual representation for $H_1^*$ and $H_2^*$}

Under Assumption \ref{IA} above, for all $(t,x,p,M) \in [0,T] \times \R^d \times \R^d \times \mathbb{S}_d(\R)$, $L(t,x,p,M) <+\infty$ if and only if there is $\mathbf{q}_A \in \mathcal{P}_1(A)$ such that $ \displaystyle \int_A b(t,x,a) d\mathbf{q}_A(a) = q$ and $\displaystyle \int_A \sigma ^t\sigma(t,x,a) d\mathbf{q}_A(a) = N$ and in this case

$$L(t,x,q,N) = \min_{ \mathbf{q}_A } \int_A f(t,x,a)  d\mathbf{q}_A(a)  $$

\noindent where the minimum is taken over the $\mathbf{q}_A \in \mathcal{P}_1(A)$ such that $ \displaystyle \int_A b(t,x,a) d\mathbf{q}_A(a) = q$ and $\displaystyle \int_A \sigma ^t\sigma(t,x,a) d\mathbf{q}_A(a)$ $= N$.
\end{lemma}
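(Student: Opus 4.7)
The plan is to introduce the candidate relaxation
\[\tilde L(t,x,q,N) := \inf\Big\{\int_A f_1(t,x,a)\,d\mathbf{q}_A(a) : \mathbf{q}_A\in\cP_1(A),\ \int_A b\,d\mathbf{q}_A = q,\ \int_A \sigma ^t\sigma\,d\mathbf{q}_A = N\Big\},\]
with the convention $\inf\emptyset=+\infty$, and to prove $L=\tilde L$ by Fenchel--Moreau duality in the variables $(q,N)\in\R^d\times\mathbb{S}_d(\R)$, freezing $(t,x)$. The easy direction $L\le\tilde L$ follows from Jensen: for any admissible $\mathbf{q}_A$ and any $(p,M)$, the very definition of $H$ gives $H(t,x,p,M)\ge -p.q-M.N-\int f_1\,d\mathbf{q}_A$, and taking $\sup_{p,M}$ then $\inf_{\mathbf{q}_A}$ yields the bound. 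Together with the reverse inequality this will give both the announced formula and the equivalence between finiteness of $L$ and existence of an admissible $\mathbf{q}_A$.

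For the reverse inequality I would verify that $\tilde L$ is proper, convex and lower semicontinuous in $(q,N)$. Properness comes from $\delta_a\in\cP_1(A)$ for each $a\in A$; convexity follows from the affinity of the moment constraints in $\mathbf{q}_A$. Lower semicontinuity is the technical heart: given $(q_n,N_n)\to(q,N)$ and near-minimizers $\mathbf{q}_A^n$, the coercivity $f_1(t,x,a)\ge C_1|a|^{1+\delta}-C_2$ from Assumption \ref{IA} provides a uniform bound on $\int |a|^{1+\delta}\,d\mathbf{q}_A^n$, hence tightness of $(\mathbf{q}_A^n)$ and uniform integrability of $a\mapsto|a|$. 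Extracting a weak limit $\mathbf{q}_A\in\cP_1(A)$, the at-most-linear growth of $b$ and the boundedness of $\sigma ^t\sigma$ let the moment constraints pass to the limit, while Fatou applied to $f_1$ (bounded below by $-C_2$ and continuous) gives $\int f_1\,d\mathbf{q}_A\le\liminf\int f_1\,d\mathbf{q}_A^n$. The same compactness argument run at a fixed $(q,N)$ with $\tilde L(q,N)<+\infty$ shows that the infimum is attained as a minimum.

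It remains to compute the Fenchel conjugate. Exchanging the supremum in $(q,N)$ with the infimum in $\mathbf{q}_A$, and using that $\sup_{\mathbf{q}_A\in\cP_1(A)}\int_A g\,d\mathbf{q}_A = \sup_{a\in A} g(a)$ for any continuous $g$ bounded above (the supremum being attained on Dirac masses), one obtains
\[\tilde L^*(t,x,p,M)=\sup_{a\in A}\big(p.b(t,x,a)+M.\sigma ^t\sigma(t,x,a)-f_1(t,x,a)\big) = H(t,x,-p,-M).\]
Since $\tilde L$ is proper, convex and l.s.c., Fenchel--Moreau yields $\tilde L=\tilde L^{**}$, and the substitution $(p,M)\mapsto(-p,-M)$ in the biconjugate identifies $\tilde L^{**}(q,N)$ with $L(q,N)=H^*(t,x,-q,-N)$. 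Hence $L=\tilde L$, which delivers the lemma. The only delicate step, where I expect to spend the most care, is the passage to the limit in the linear-growth constraint $\int b\,d\mathbf{q}_A^n=q_n$ on the non-compact set $A$: the super-linear coercivity of $f_1$ is precisely what supplies the uniform integrability needed to conclude.
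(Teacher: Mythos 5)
Your proof is correct, but it takes a genuinely different route from the paper's. The paper writes $L$ as $\sup_{p,M}\inf_{\mathbf{q}_A}$ of the Lagrangian $(p,M,\mathbf{q}_A)\mapsto p.q+M.N+\int_A(f_1-b.p-\sigma\,^t\sigma.M)\,d\mathbf{q}_A$, exchanges the $\sup$ and the $\inf$ by appealing to the Von Neumann minimax theorem, and justifies the exchange by the coercivity of $f_1$ together with lower-semicontinuity results from Ambrosio--Gigli--Savar\'e. You instead define the relaxed functional $\tilde L$ directly, verify by hand that it is proper, convex, and lower semicontinuous in $(q,N)$ --- using the same coercivity of $f_1$ to get tightness of near-minimizers, uniform integrability of $a\mapsto|a|$, and passage to the limit in the moment constraints --- then compute $\tilde L^{*}$ and invoke Fenchel--Moreau biconjugation. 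The analytical core (coercivity providing the compactness) is identical, but your packaging is more elementary: you only need Fenchel--Moreau rather than an abstract minimax theorem, and the attainment of the minimum falls out of your compactness argument rather than being read off as a by-product of Von Neumann. Two minor points: the ``easy direction'' $L\le\tilde L$ is not Jensen but simply the fact that the supremum defining $H$ dominates any $\mathbf{q}_A$-average; and when you pass to the limit in $\int_A b\,d\mathbf{q}_A^n=q_n$ on the non-compact $A$, narrow convergence alone is not enough --- you should invoke de la Vall\'ee Poussin explicitly (the uniform bound on $\int|a|^{1+\delta}\,d\mathbf{q}_A^n$ makes $a\mapsto|a|$, hence $a\mapsto b(t,x,a)$ with its linear growth, uniformly integrable along the sequence).
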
 

\begin{proof}
It is elementary to show that for all $(t,x,p,M) \in [0,T] \times \R^d \times \R^d \times \mathbb{S}_d(\R)$,
$$ H(t,x,p,M) = \sup_{\mathbf{q}_A \in \mathcal{P}_1(A)} \left \{ \int_A (-b(t,x,a).p-\sigma^t\sigma(t,x,a) - f_1(t,x,a))d\mathbf{q}_A (a) \right \} $$
and therefore $L$ reads as follows for all $(t,x,q,N) \in [0,T] \times \R^d \times \R^d \times \mathbb{S}_d(\R)$,

$$L (t,x,q,N) = \sup_{p,M} \left \{ \inf_{\mathbf{q}_A \in \mathcal{P}_1(A)} \left \{ p.q + M.N + \int_A (f(t,x,a) - b(t,x,a).p - \sigma^t\sigma(t,x,a).M ) d\mathbf{q}_A (a) \right \} \right \}.$$
The result follows by exchanging the “sup" and the “inf". To this end we use Von Neumann Theorem \ref{VN} in the Appendix. The coercivity of $f_1$ as well as results of \cite{Ambrosio2005} (Proposition 7.1.5) about the lower semicontinuity of functions defined on the space of probability measures allow to ensure that the use of the minmax theorem is licit.
\end{proof}

\noindent From this dual representation we can see that the lower-bound on $H(t,x,p,0)$ ---or equivalently the upper-bound on $L(t,x,q,0)$--- is a “weak"-controllability condition. It ensures that the agent can take any drift with a relaxed (i.e measure-valued) control without paying more than the $r_1^*$-power of the drift~:

\begin{lemma}
Fix $(t,x) \in [0,T] \times \R^d$. It holds that $H(t,x,p,0) \geq \alpha_1 |p|^{r_1} -C_H$ for all $p \in \R^d$ if and only if, for all $q \in \R^d$ there exists $\mathbf{q}_{A} \in \mathcal{P}_1(A)$ such that $\displaystyle  q = \int_{A} b(t,x,a) d\mathbf{q}_A(a) $ and $\displaystyle   \int_{A} f_1(t,x,a) d\mathbf{q}_A(a) \leq \alpha _1' |q|^{r_1^*} + C_H$.
\label{GrH_1 explained}
\end{lemma}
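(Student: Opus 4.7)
The argument is an application of Legendre-Fenchel duality, and closely mirrors the proof of Lemma \ref{dual representation for $H_1^*$ and $H_2^*$} in the restricted setting where only the drift variable is at play. Fix $(t,x)\in [0,T]\times\R^d$, set $H_1(p):=H(t,x,p,0)$, and introduce its convex conjugate
$$L_1(q):=\sup_{p\in\R^d}\bigl\{-p\cdot q - H_1(p)\bigr\}.$$
The function $H_1$ is convex (as a supremum of affine functions of $p$) and, by the upper bound in Assumption \ref{GrA}, finite everywhere on $\R^d$, hence continuous; Fenchel-Moreau then yields the biconjugation identity $H_1(p)=\sup_{q\in\R^d}\{-p\cdot q - L_1(q)\}$. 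I plan to show that each of the two equivalent conditions in the lemma amounts to the same quantitative estimate on $L_1$, namely $L_1(q)\leq\alpha_1'|q|^{r_1^*}+C_H$ for every $q$.

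The first step is to establish the dual representation
$$L_1(q)=\min\Bigl\{\int_A f_1(t,x,a)\,d\mathbf{q}_A(a)\ :\ \mathbf{q}_A\in\mathcal{P}_1(A),\ \int_A b(t,x,a)\,d\mathbf{q}_A(a)=q\Bigr\},$$
with the minimum attained whenever finite (and with the convention $\inf\emptyset=+\infty$). The proof follows the exact template of Lemma \ref{dual representation for $H_1^*$ and $H_2^*$}: one uses the relaxed formula $H_1(p)=\sup_{\mathbf{q}_A\in\mathcal{P}_1(A)}\int_A(-b(t,x,a)\cdot p - f_1(t,x,a))\,d\mathbf{q}_A(a)$, substitutes it into the definition of $L_1$, and exchanges the order of sup and inf using the Von Neumann theorem recalled in the appendix. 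The coercivity of $f_1$ in $a$ (Assumption \ref{IA}) together with the lower-semicontinuity results of \cite{Ambrosio2005} (Proposition 7.1.5) provide the compactness needed to justify the exchange and secure attainment of the minimum.

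A direct computation of the Legendre transform of a power function shows that the conjugate of $p\mapsto \alpha_1|p|^{r_1}-C_H$ is exactly $q\mapsto \alpha_1'|q|^{r_1^*}+C_H$, with $\alpha_1'$ as in \eqref{EstimateH'}. By monotonicity of the Legendre transform, the lower bound $H_1(p)\geq \alpha_1|p|^{r_1}-C_H$ for all $p$ is equivalent to the upper bound $L_1(q)\leq \alpha_1'|q|^{r_1^*}+C_H$ for all $q$. Combining this equivalence with the dual representation closes both implications: the forward direction extracts the desired $\mathbf{q}_A$ from the attained minimum, while for the reverse direction one notes that the existence of such $\mathbf{q}_A$ at every $q$ gives $L_1(q)\leq \alpha_1'|q|^{r_1^*}+C_H$, and biconjugation transfers this bound back to the lower bound on $H_1$ (alternatively, one can argue directly: $H_1(p)\geq -p\cdot q - \int_A f_1\,d\mathbf{q}_A \geq -p\cdot q - \alpha_1'|q|^{r_1^*}-C_H$ for every $q$, and taking the supremum over $q$ recovers $\alpha_1|p|^{r_1}-C_H$). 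The only non-routine ingredient is attainment in the dual representation, but since this parallels the already-proved Lemma \ref{dual representation for $H_1^*$ and $H_2^*$} in an even simpler constraint geometry, no new obstacle arises.
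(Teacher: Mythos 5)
Your proof is correct and makes explicit exactly the argument the paper leaves implicit: it derives the lemma from the dual representation of Lemma \ref{dual representation for $H_1^*$ and $H_2^*$} (specialized to the drift constraint only, with the same Von Neumann min-max exchange and attainment argument) combined with the Legendre-transform reformulation of Assumption \ref{GrA} recorded in \eqref{EstimateH'}. The paper states the lemma without proof, presenting it as a consequence of the preceding dual representation, and your write-up faithfully supplies the missing details.
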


\noindent For example, the growth condition on $H$ is satisfied if $Conv(Im(b(t,x,.)) = \R^d$ for all $(t,x) \in [0,T] \times \R^d$ and for all $(t,x,a) \in [0,T] \times \R^d \times A$, $ \alpha_2' |b(t,x,a)|^{r_2^*} - C_H \leq f_1(t,x,a) \leq \alpha_1' |b(t,x,a)|^{r_1^*} + C_H. $

\subsection{Main results}

\noindent Throughout the article, we consider a fixed filtered probability space $(\Omega, \mathcal{F}, \mathbb{F}, \mathbb{P})$ with $\mathbb{F} = (\mathcal{F}_t)_{t\geq 0} $ satisfying the usual conditions and supporting an adapted, standard $d$-dimensional Brownian motion $(B_t)_{t\geq 0}$. We fix a $\mathcal{F}_0$-measurable random variable $X_0$, independent of $(B_t)$ and such that $X_0$ belongs to $L^{r_1^*\vee 2}(\mathbb{P})$. The control process  $\alpha = (\alpha_t)_{t \geq 0}$ is a progressively measurable process valued in $A$ with finite $\mathbb{L}^2(\Omega \times [0,T))$-norm. We denote by $\mathcal{A}$ the set of control processes.  From the Cauchy-Lipschitz theorem, we know that for every $\alpha \in \mathcal{A}$, there exists a unique $\mathbb{F}$-adapted process $X^{\alpha}$ satisfying :

$$ dX_t = b(t,X_t,\alpha_t)dt + \sqrt{2}\sigma(t,X_t,\alpha_t) dB_t $$

\noindent with the initial condition $X_0^{\alpha} = X_0$. A particular class of controls which is of interest is the one of Markovian controls (or Markov policies). A control process $\alpha$ is a Markovian control if there is a measurable function $\alpha : [0,T] \times \R^d \rightarrow $ such that, for all $t \in [0,T]$, $\alpha_t = \alpha(t,X_t^{\alpha})$. We now introduce the cost functional $J_{SP} : \mathcal{A} \rightarrow \R \cup \{+\infty\} $

$$J_{SP} (\alpha) := \E \left[ \int_0^T \left( f_1(s,X_s^{\alpha},\alpha_s) + f_2(s,\mathcal{L}(X_s^{\alpha})) \right) ds + g(\mathcal{L}(X_T^{\alpha})) \right]. $$

\noindent The optimal control problem we are interested in is to minimize $J_{SP} (\alpha)$ over $\alpha \in \mathcal{A} $ under the constraint $\Psi(\cL(X_T)) \leq 0$.

\noindent If there exists a continuous function $h : \R^d \rightarrow \R $ such that, for all $m \in \cP_1(\R^d) $,  $\displaystyle \Psi(m)  = \int_{\R^d} h(x) dm(x) $ then will say that the final constraint is \textbf{linear}. We define the set of admissible controls $\mathcal{U}_{ad}$ 

$$ \mathcal{U}_{ad} := \left \{ \alpha \in \mathcal{A} \mbox{  :  } \Psi(\cL(X_T^{\alpha}) ) \leq 0 \mbox{  and  } J_{SP} (\alpha) < +\infty  \right \}. $$

\noindent The problem in strong formulation is thus :

\begin{equation}
\inf_{\alpha \in \mathcal{U}_{ad} } J_{SP}(\alpha). 
\tag{SP}
\label{SP}
\end{equation}

\noindent The fact that  $\mathcal{U}_{ad}$ is not empty is not trivial in itself but in our setting we will show that there are indeed admissible controls. Our results are the following :

\begin{theorem}[HJB equation] 
Take $g' \in \mathcal{C}_b^{3+\alpha}(\R^d)$ and $f_2' \in \mathcal{C}_b ([0,T], \mathcal{C}_b^{3+\alpha}(\R^d))$ such that $t \rightarrow f_2'(t,x) \in \mathcal{C}^{\alpha}([0,T])$ for all $x \in \R^d$ with bounds uniform in $x$. Assume further that Assumptions \ref{IA} and \ref{HAss} hold with \ref{DerGrDxHA1} in force if $f_2' =0$ and  \ref{DerGrDxHA2} in force if $f_2' \neq 0$. Then the Hamilton-Jacobi-Bellman equation

\begin{equation*}
\left \{
\begin{array}{ll}
\displaystyle -\partial_t \phi(t,x) + H(t,x, D\phi(t,x),D^2\phi(t,x)) = f_2'(t,x) & \mbox{in } [0,T]\times \R^d\\
\phi(T,x) = g'(x) & \mbox{in } \R^d
\end{array}
\right.
\end{equation*}

\noindent admits a unique strong solution $\phi \in \cC_b^{ \frac{3+\alpha}{2}, 3+\alpha}([0,T] \times \R^d)$.
\label{MainHJB}
\end{theorem}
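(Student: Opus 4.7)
The plan is to prove existence via an approximation procedure combined with a priori estimates, upgrade the regularity by parabolic bootstrap, and deduce uniqueness from a comparison principle. The parabolic structure comes from the uniform ellipticity $\Lambda^- I_d \leq -\partial_M H \leq \Lambda^+ I_d$ and the concavity of $H$ in $M$ (as a supremum of affine maps), while the various items of Assumption \ref{HAss} are tailored precisely to yield the Lipschitz bounds that make the classical theory applicable.

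I would first approximate by truncating the control set to $A_R := A \cap B(0,R)$. The resulting Hamiltonian $H_R$ has globally bounded derivatives on $[0,T]\times\R^d\times\R^d\times \mathbb{S}_d(\R)$ thanks to Assumption \ref{IA}, and the corresponding HJB equation falls within the classical framework of \cite{Fleming2006} (Chapter IV.4), yielding a unique smooth solution $\phi_R \in \cC_b^{\frac{3+\alpha}{2},\,3+\alpha}([0,T]\times\R^d)$. The whole issue is then to obtain $R$-uniform estimates on $\phi_R$ and pass to the limit $R\to\infty$.

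The hardest step, and the one motivating the bulk of Assumption \ref{HAss}, is the uniform Lipschitz bound. A uniform $L^\infty$-bound follows at once from the maximum principle by comparison with the barriers $\|g'\|_\infty + C(T-t)$, using that $H(t,x,0,0)$, $f_2'$ and $g'$ are bounded. For the Lipschitz bound in $x$ I would run the weak Bernstein method in the spirit of \cite{Ishii1990}, \cite{Barles1991}, \cite{Lions2005} and \cite{Armstrong2018}: forming $w := |D\phi_R|^2$, differentiating the equation in $x$ and contracting with $D\phi_R$, and exploiting the uniform ellipticity together with Assumptions \ref{DerGrDtHA}--\ref{DerGrDxHA} to dominate the unpleasant terms involving $\partial_x H$, $\partial_p H$ and $\partial_M H$. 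The dichotomy \ref{DerGrDxHA1} versus \ref{DerGrDxHA2} is precisely what allows the Bernstein inequality to close: in the source-free case one exploits the decay of $(|p|^2+\partial_xH\cdot p)/H^2$ to absorb gradient terms directly into $H$, while in the presence of a source term the linear-in-$p$ growth of $D_xH$ is what saves the day. Lipschitz regularity in $t$ then follows from the equation itself combined with a standard difference-quotient argument.

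Once $\|D\phi_R\|_\infty \leq C$ uniformly in $R$, the Hamiltonian is a uniformly elliptic, concave fully nonlinear operator in $D^2\phi_R$ with coefficients depending on $(t,x,D\phi_R)$ in a uniformly bounded way. Parabolic Evans--Krylov then yields a uniform interior $\cC^{2,\alpha}$ estimate, and a Schauder-type bootstrap on the linearised equation --- legitimate thanks to the $\cC^1$-regularity of $H$ (Assumption \ref{C1HAss}) and the Hölder regularity of $f_2'$ and $g'$ --- upgrades this to a uniform $\cC_b^{\frac{3+\alpha}{2},\,3+\alpha}$ estimate. For $R$ larger than a constant depending only on these bounds, the coercivity of $f_1$ in Assumption \ref{IA} ensures that the argmax defining $H_R$ is attained in the interior of $A_R$, so $H_R=H$ there; extracting a convergent subsequence $\phi_{R_n}\to\phi$ then produces a strong solution of the original equation with the claimed regularity. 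Uniqueness is immediate: the difference of two strong solutions satisfies a linear parabolic equation with bounded coefficients and vanishing terminal data, hence must vanish by the maximum principle.
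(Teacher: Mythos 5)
Your proposal shares the right ingredients (uniform ellipticity, weak Bernstein method, upgrade to smoothness via parabolic regularity, uniqueness by comparison) but deviates from the paper's path in two ways, one of which creates a genuine gap.

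The approach itself is different: you approximate by truncating the control set $A_R = A\cap B(0,R)$ and run $R$-uniform estimates on smooth solutions, whereas the paper works directly with the bounded uniformly continuous viscosity solution of the full equation (comparison principle from Fleming--Soner Ch.~V), proves a Lipschitz estimate there by a doubling-of-variables argument in the spirit of the weak Bernstein method (using the matrix interpolation $Z_\lambda$ from Armstrong--Cardaliaguet), and then deals with the regularity bootstrap via the chain semiconcave (Ishii--Lions) $\Rightarrow$ semiconvex (Imbert) $\Rightarrow$ $Du$ Lipschitz $\Rightarrow$ Wang $\Rightarrow$ Ladyzenskaja, rather than directly via Evans--Krylov. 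These are legitimate alternatives in outline, but your truncation step is problematic in the regime where $f_2'=0$ and Assumption~\ref{DerGrDxHA1} is invoked: the entire closing of the Bernstein (or doubling) inequality in that case rests on the coercivity $H(t,x,p,0)\geq \alpha_1|p|^{r_1}-C_H$ with $r_1>1$, but for any fixed $R$ the truncated Hamiltonian $H_R(t,x,p,0)$ grows at most linearly in $p$, so the contradiction does not close uniformly in $R$. The paper avoids this by never truncating $A$.

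The second, more serious, gap is in your treatment of the time regularity. You write that ``Lipschitz regularity in $t$ then follows from the equation itself combined with a standard difference-quotient argument.'' That argument requires $|f_2'(t-s,x)-f_2'(t,x)|\leq C s$, i.e.\ time-Lipschitz data, but the theorem only assumes $t\mapsto f_2'(t,x)$ is $\alpha$-H\"older. You cannot infer $\|\partial_t u\|_\infty\leq C$ from the equation either, since $D^2u$ is not yet controlled at that stage of the bootstrap. This is precisely the obstruction the paper spends most of the proof of Theorem~\ref{MainHJB} circumventing: it first proves the lemma when $f_2'$ is time-Lipschitz (so the translation argument is available), then mollifies $f_2'$ in time, obtains a space-Lipschitz bound on $u_n$ by a weighted Bernstein argument that does not need coercivity (uses Assumption~\ref{DerGrDxHA2}), and---crucially---obtains a time-Lipschitz bound on $u_n$ \emph{independent of} $\|\partial_t f_2'^{(n)}\|_\infty$ by representing $\partial_t u_n(t_0,x_0)$ via integration against the solution of the adjoint transport equation started from $\delta_{x_0}$ and integrating by parts to trade $\partial_t f_2'^{(n)}$ for $Df_2'^{(n)}$ and $D^2 f_2'^{(n)}$. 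Without some substitute for this duality/adjoint step, your proposal does not establish the needed uniform time estimate when $f_2'$ is merely H\"older in $t$.
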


\begin{theorem}[General Constraint] Under Assumptions \ref{IA}, \ref{HAss} and \ref{FCg}, there exist optimal Markov policies. Moreover, if  $(\alpha_t) \in \mathcal{A}$ is an optimal Markov policy, then there exists $(\lambda, \phi, m) \in \R^+ \times \mathcal{C}_b^{1,2}([0,T]\times \R^d)) \times \mathcal{C}^0([0,T], \mathcal{P}_2(\R^d) ) $ such that, for $m(t) \otimes dt$-almost all $(t,x)$ in $[0,T]\times \R^d$

\begin{align}
  \label{optimalityalpha} H(t,x,D\phi(t,x),D^2\phi(t,x)) = &-b(t,x, \alpha(t,x)) . D\phi(t,x) -\sigma^t\sigma(t,x,\alpha(s,x)).D^2\phi(t,x) \\
& - f_1(t,x, \alpha(t,x)) \notag
\end{align}

\noindent and $(\lambda, \phi, m)$ satisfies the system of optimality conditions :

\begin{equation} 
\left \{
\begin{array}{ll}
\displaystyle -\partial_t \phi(t,x) + H(t,x,D\phi(t,x),D^2\phi(t,x)) = \frac{\delta f_2}{\delta m}(t,m(t),x)  & \mbox{in } [0,T]\times \R^d\\
\displaystyle \partial_t m - \mdiv (\partial_p H(t,x,D \phi(t,x),D^2\phi(t,x))m) \\
\displaystyle \hspace{50pt}+ \sum_{i,j} \partial_{ij}^2 ((\partial_M H(t,x,D\phi(t,x),D^2\phi(t,x)))_{ij}m) = 0 & \mbox{in } [0,T]\times \R^d \\
\displaystyle \phi(T,x) = \lambda \frac{\delta \Psi}{\delta m}(m(T),x) + \frac{\delta g}{\delta m}(m(T),x)  \mbox{      in      } \R^d \mbox{,       }
\displaystyle m(0) = m_0, \\
\displaystyle \lambda \Psi(m(T)) = 0 \mbox{,        }
\displaystyle  \Psi(m(T)) \leq 0 \mbox{,            }
\displaystyle \lambda \geq 0. \\
\end{array}
\right.
\tag{OC}
\label{OC}
\end{equation}

\noindent Furthermore, $m(t)$ is actually the law of the optimally controlled process $X_t^{\alpha}$ and the value of the problem -denoted by $V_{SP}(X_0)$- is given by 
$$V_{SP}(X_0) := \displaystyle \inf_{\alpha \in \mathcal{U}_{ad} } J_{SP}(\alpha) = \int_{\R^d} \phi(0,x) dm_0(x) + \int_0^T f_2(t,m(t))dt + g(m(T)).$$
\label{MainGeneral}
\end{theorem}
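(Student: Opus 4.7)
The plan is to pass through the relaxed formulation (RP) described in the introduction and to combine the convex duality machinery of Section~\ref{sec: A relaxed Problem} with the HJB regularity statement of Theorem~\ref{MainHJB}.

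First I would establish that the infima of \eqref{SP} and (RP) coincide and that every optimal relaxed triple $(m^*,\omega^*,W^*)$ yields an optimal Markov policy for \eqref{SP}. The inequality $\inf \text{(RP)} \leq \inf \text{(SP)}$ is immediate: from any admissible $\alpha \in \mathcal{U}_{ad}$, the triple built from $(\mathcal{L}(X_t^\alpha), b(t,\cdot,\alpha_t)\,\mathcal{L}(X_t^\alpha), \sigma^t\sigma(t,\cdot,\alpha_t)\mathcal{L}(X_t^\alpha))$ is admissible for (RP) via It\^o's formula. For the reverse direction, given an admissible relaxed triple, Lemma~\ref{dual representation for $H_1^*$ and $H_2^*$} yields a Borel family $(t,x)\mapsto \mathbf{q}_A(t,x)\in\mathcal{P}_1(A)$ realizing the Lagrangian whose $b$- and $\sigma^t\sigma$-barycenters are $d\omega/(dt\otimes dm)$ and $dW/(dt\otimes dm)$; a superposition principle in the spirit of Figalli--Trevisan then produces a weak solution of the associated SDE together with a measurable Markov selector $\alpha(t,x)$ inside the support of $\mathbf{q}_A(t,x)$. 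Existence for (RP) follows by the direct method: coercivity of $L$ inherited from assumption~\ref{GrA} (the $r_2^*$-growth from below of $L(t,x,\cdot,0)$) bounds minimizing sequences, tightness and compactness in $\mathcal{C}^0([0,T],\mathcal{P}_1(\R^d))$ follow from standard Fokker--Planck estimates, the cost is lower semicontinuous, and non-emptiness of the feasible set is guaranteed by assumption~\ref{TransCondU}.

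Next I would treat the linear case $\Psi(m)=\int h\,dm$, $f_2(t,m)=\int f_2'(t,\cdot)\,dm$, $g(m)=\int g'\,dm$, where convex duality is cleanest. The relaxed problem is then a convex minimization with an affine Fokker--Planck constraint and the affine terminal inequality $\int h\,dm(T)\leq 0$. I would apply Fenchel--Rockafellar duality in the spirit of \cite{Cardaliaguet2015,Briani2018}, with test functions $\phi\in\mathcal{C}_b^{1,2}([0,T]\times\R^d)$ dualizing the Fokker--Planck constraint and a multiplier $\lambda\geq 0$ dualizing the terminal inequality. The pre-dual is exactly $\sup\int\phi(0,x)\,dm_0$ over pairs $(\lambda,\phi)$ satisfying the HJB subsolution inequality and $\phi(T,\cdot)\leq \lambda h+g'$. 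A Slater-type qualification furnished by \ref{TransCondU} yields absence of duality gap and existence of an optimal multiplier $\lambda^*\geq 0$, bounded \emph{a priori} thanks to \ref{GrA}. Crucially, Theorem~\ref{MainHJB} upgrades the subsolution condition at an optimum into a genuine smooth solution $\phi^*\in\mathcal{C}_b^{(3+\alpha)/2,3+\alpha}$ with equality, and the terminal inequality saturates to $\phi^*(T,x)=\lambda^* h(x)+g'(x)$. Equality in the Fenchel inequality for $L$ and $H$ then forces $d\omega^*/(dt\otimes dm^*)=-\partial_p H(t,x,D\phi^*,D^2\phi^*)$ and $dW^*/(dt\otimes dm^*)=-\partial_M H(t,x,D\phi^*,D^2\phi^*)$, which via the envelope theorem is precisely the pointwise maximality relation~\eqref{optimalityalpha}.

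For the general problem I would linearize around an optimal relaxed triple $(m^*,\omega^*,W^*)$ corresponding to a given optimal Markov policy $\alpha$: substitute $f_2(t,m)$, $g(m)$, $\Psi(m)$ by the affine functionals $m\mapsto\int\frac{\delta f_2}{\delta m}(t,m^*(t),x)\,dm$, $m\mapsto\int\frac{\delta g}{\delta m}(m^*(T),x)\,dm$, $m\mapsto\int\frac{\delta\Psi}{\delta m}(m^*(T),x)\,dm$. The first-order optimality condition shows that $(m^*,\omega^*,W^*)$ remains optimal for the linearized problem, the linearized constraint remains qualified by \ref{TransCondU}, and the regularity of $\frac{\delta g}{\delta m}$, $\frac{\delta\Psi}{\delta m}$, $\frac{\delta f_2}{\delta m}$ in $\mathcal{C}_b^{3+\alpha}$ required by Theorem~\ref{MainHJB} is built into assumptions~\ref{IA} and~\ref{FCg}. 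The previous paragraph then applies verbatim and produces $(\lambda,\phi,m^*)$ satisfying~\eqref{OC}. The identification of $m(t)$ with $\mathcal{L}(X_t^\alpha)$ follows from uniqueness for the Fokker--Planck equation with drift $-\partial_p H(t,x,D\phi,D^2\phi)$, Lipschitz in $x$ because $\phi\in\mathcal{C}_b^{(3+\alpha)/2,3+\alpha}$; the value formula is obtained by differentiating $t\mapsto\int\phi(t,x)\,dm(t)(x)$ and inserting the HJB and Fokker--Planck equations.

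The principal technical obstacle lies in closing the loop between primal and dual sides: the duality argument requires the HJB equation to admit a classical solution for the optimality conditions to make sense pointwise, while Theorem~\ref{MainHJB} requires a H\"older-in-time source term $\frac{\delta f_2}{\delta m}(t,m^*(t),\cdot)$ and sufficiently regular terminal data. The spatial regularity is granted by assumptions~\ref{IA} and~\ref{FCg}, but the H\"older regularity in time of $t\mapsto m^*(t)\in\mathcal{P}_1(\R^d)$ must be derived a priori from the $r_2^*$-growth of $L$, which controls $|d\omega^*|/(dt\otimes dm^*)$ in a suitable $L^p$-space and hence bounds the Wasserstein speed of $m^*$. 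This circular interplay---the dual estimate needs a regular $m^*$, whose regularity comes from primal bounds on $\omega^*$---dictates the precise form of assumption~\ref{GrA} and is where the bulk of the delicate work will concentrate.
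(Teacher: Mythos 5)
Your overall plan --- relax to an optimal control problem for the Fokker--Planck equation, dualize against HJB subsolutions, linearize the nonlinear data and constraint at an optimizer, and then upgrade the dual subsolution to the classical solution of Theorem~\ref{MainHJB} via the envelope theorem --- is essentially the paper's strategy, even though you invoke Fenchel--Rockafellar where the paper uses the Von Neumann minimax theorem; both are interchangeable here.

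However, your opening argument for the reverse inequality $\inf\text{(RP)} \geq \inf\text{(SP)}$ does not work, and it is not merely a presentational issue. Given an arbitrary admissible relaxed triple, Lemma~\ref{dual representation for $H_1^*$ and $H_2^*$} produces a measure-valued $\mathbf{q}_A(t,x) \in \mathcal{P}_1(A)$ whose $b$- and $\sigma^t\sigma$-barycenters equal the densities of $\omega$ and $W$. From this you cannot extract a Markov \emph{selector} $\alpha(t,x) \in \supp\mathbf{q}_A(t,x)$ reproducing those barycenters: unless $\mathbf{q}_A(t,x)$ is a Dirac mass, no single $a$ realizes the averaged drift and volatility, so the relaxed cost and the strong cost differ for such triples. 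Moreover, a Figalli--Trevisan superposition only yields a \emph{weak} solution on \emph{some} auxiliary probability space; it does not produce an admissible element of $\mathcal{A}$, which the paper defines on a fixed stochastic basis $(\Omega,\mathcal{F},\mathbb{F},\mathbb{P})$. The paper's mechanism is the opposite order: first solve the dual problem and obtain $\tilde\phi \in \mathcal{C}_b^{(3+\alpha)/2,\,3+\alpha}$ from Theorem~\ref{MainHJB}; then, because $H \in \mathcal{C}^1$, the envelope theorem (Remark on Assumption~\ref{C1HAss}) forces all maximizers $a$ in~\eqref{optimalityalpha} to share the same $b(t,x,a)$ and $\sigma^t\sigma(t,x,a)$, equal to $-\partial_p H$ and $-\partial_M H$ evaluated on $(D\tilde\phi,D^2\tilde\phi)$. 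These coefficients inherit Lipschitz regularity from $\tilde\phi$, the SDE then has a unique \emph{strong} solution on the given basis, $\tilde\alpha_t := \tilde\alpha(t,\tilde X_t)$ lies in $\mathcal{U}_{ad}$, and equality $J_{SP}(\tilde\alpha) = J_{RP}(\tilde m,\tilde\omega,\tilde W)$ and hence $V_{SP}=V_{RP}$ fall out only \emph{a posteriori}. Your remaining paragraphs carry out the linearization and duality correctly and would in fact supersede the first step, so the fix is simply to delete the superposition argument and let the optimality conditions supply the Markov policy.

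One smaller imprecision: you attribute the bound on the multiplier $\lambda$ to Assumption~\ref{GrA}, but the a priori bound in Lemma~\ref{lemmaHJB} comes from exhibiting a strictly feasible competitor $(\bar m,\bar\omega,\bar W)$ with $\int h\,d\bar m(T) < 0$ (using Lemma~\ref{Reachability} and the transversality condition~\ref{TransCondU}), and then reading the bound off inequality~\eqref{Estimatelambda2020}; Assumption~\ref{GrA} enters only indirectly, through controllability.
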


When the constraint and the costs $f_2$ and $g$ are convex in the measure variable, we are able to show that the conditions are also sufficient :

\begin{theorem}[Convex constraint and convex costs]
If $\Psi$, $f_2$ and $g$ are convex in the measure argument and Assumptions \ref{IA}, \ref{HAss} and \ref{FCg} hold, then the conditions of Theorem \ref{MainGeneral} are also sufficient conditions: if $\alpha \in L^0([0,T] \times \R^d,A)$ satisfies \ref{optimalityalpha} for some  $(\phi,m,\lambda)$ satisfying \ref{OC} then the SDE 
$$dX_t =b(t,X_t,\alpha(t,X_t))dt + \sqrt{2}\sigma(t,X_t,\alpha(t,X_t))dB_t$$
starting from $X_0$ has unique strong solution $X_t$, it holds that $m(t) = \mathcal{L}(X_t)$ and $\alpha_t:=\alpha(t,X_t)$ is a Markovian solution to \ref{SP}.
\label{MainConv}
\end{theorem}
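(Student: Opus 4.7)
I would use a verification argument based on It\^o's formula and the convexity of $\Psi$, $f_2$, $g$, in three stages: strong existence of $X$, the identification $\cL(X_t)=m(t)$, and a convexity comparison against any admissible competitor.

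\emph{Strong existence and identification of the law.} The pointwise optimality \ref{optimalityalpha} states that $\alpha(t,x)$ achieves the supremum in $H(t,x,D\phi(t,x),D^2\phi(t,x))$, so by the envelope theorem (cf.\ the remark following Assumption \ref{HAss}), $m(t)\otimes dt$-almost everywhere,
$$b(t,x,\alpha(t,x))=-\partial_p H(t,x,D\phi,D^2\phi),\qquad \sigma^t\sigma(t,x,\alpha(t,x))=-\partial_M H(t,x,D\phi,D^2\phi).$$
Theorem \ref{MainHJB} ensures $\phi \in \cC_b^{\frac{3+\alpha}{2},3+\alpha}$, so both right-hand sides are bounded and Lipschitz in $x$, with the second uniformly elliptic via $\Lambda^- I_d \leq -\partial_M H \leq \Lambda^+ I_d$. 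Setting $\tilde b(t,x):=-\partial_p H(t,x,D\phi(t,x),D^2\phi(t,x))$ and $\tilde\sigma(t,x):=\sqrt{-\partial_M H(t,x,D\phi(t,x),D^2\phi(t,x))}$ (analytic square root on $\mathbb{S}^{++}_d(\R)$), the regularized SDE $dX_t=\tilde b\,dt+\sqrt{2}\,\tilde\sigma\, dB_t$ has a unique strong solution. Its marginal law satisfies the Fokker--Planck equation from \ref{OC} with the same initial datum $m_0$, hence equals $m(t)$ by parabolic uniqueness, and the envelope identifications then hold along the trajectory of $X$. In particular, $X$ is also a strong solution of the nominal SDE $dX_t=b(t,X_t,\alpha(t,X_t))dt+\sqrt{2}\sigma(t,X_t,\alpha(t,X_t))dB_t$.

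\emph{Verification for $\alpha$ and comparison with $\beta$.} Applying It\^o to $\phi(t,X_t)$, the HJB in \ref{OC} combined with \ref{optimalityalpha} collapses the finite-variation part of $d\phi(t,X_t)$ to $[-f_1(t,X_t,\alpha(t,X_t))-\frac{\delta f_2}{\delta m}(t,m(t),X_t)]dt$; the stochastic part is a true martingale because $D\phi$ and $\sigma$ are bounded. Taking expectations, using the terminal condition for $\phi$, the identification $\cL(X_t)=m(t)$, the normalization $\int_{\R^d}\frac{\delta U}{\delta m}(m,x)\,dm(x)=0$, and the complementary slackness $\lambda\Psi(m(T))=0$ yields
$$J_{SP}(\alpha)=\int_{\R^d}\phi(0,x)\,dm_0(x)+\int_0^T f_2(t,m(t))\,dt+g(m(T)),$$
which is finite and coincides with $V_{SP}(X_0)$, so $\alpha$ is admissible. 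Now fix $\beta\in\mathcal{U}_{ad}$, $Y:=X^\beta$, $n(t):=\cL(Y_t)$. By definition of $H$ as supremum, $H(t,\cdot,D\phi,D^2\phi)\geq -b(t,\cdot,\beta_t).D\phi-\sigma^t\sigma(t,\cdot,\beta_t).D^2\phi-f_1(t,\cdot,\beta_t)$, so It\^o applied to $\phi(t,Y_t)$ gives, after expectation,
$$\int_{\R^d}\phi(T,x)\,dn(T)(x)-\int_{\R^d}\phi(0,x)\,dm_0(x)\geq -\E\int_0^T f_1(t,Y_t,\beta_t)\,dt-\int_0^T\!\!\int_{\R^d}\tfrac{\delta f_2}{\delta m}(t,m(t),x)\,dn(t)(x)\,dt.$$
Convexity of $f_2(t,\cdot)$, $g$, and $\Psi$ with the normalization gives the linearizations $f_2(t,n(t))\geq f_2(t,m(t))+\int\frac{\delta f_2}{\delta m}(t,m(t),x)\,dn(t)(x)$, analogously for $g$, and $\Psi(n(T))\geq \Psi(m(T))+\int\frac{\delta\Psi}{\delta m}(m(T),x)\,dn(T)(x)$. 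The last inequality combined with $\Psi(n(T))\leq 0$, $\lambda\geq 0$, and $\lambda\Psi(m(T))=0$ yields $\lambda\int\frac{\delta\Psi}{\delta m}(m(T),x)\,dn(T)(x)\leq 0$; inserting the terminal condition $\phi(T,x)=\lambda\frac{\delta\Psi}{\delta m}(m(T),x)+\frac{\delta g}{\delta m}(m(T),x)$ and rearranging produces $J_{SP}(\beta)\geq J_{SP}(\alpha)$.

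\emph{Main obstacle.} The delicate step is the first one: the nominal coefficients $b(\cdot,\cdot,\alpha(\cdot,\cdot))$ and $\sigma(\cdot,\cdot,\alpha(\cdot,\cdot))$ are only measurable in $x$, while the envelope substitution by the Lipschitz fields $-\partial_p H,-\partial_M H$ holds only $m\otimes dt$-a.e.\ with $m$ itself unknown. The resolution is to treat the regularized Lipschitz SDE as primitive --- solve it unconditionally, recognize that its law satisfies the Fokker--Planck equation from \ref{OC} with the same initial datum, invoke parabolic uniqueness to match it with $m$, and only then transport the identifications back. All remaining points --- integrability for the It\^o martingale and for $f_1(t,Y_t,\beta_t)$ when $\beta\in\mathcal{U}_{ad}$ (via boundedness of $\phi,D\phi,D^2\phi$ and the coercivity in Assumption \ref{HAss}.\ref{GrA}) --- are routine.
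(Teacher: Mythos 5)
Your proof takes essentially the same route as the paper: construct the Markov process from the envelope identifications and the regularity of $\phi$, identify its law with $m$ via the Fokker--Planck equation, run a verification argument with It\^o's formula, and then use convexity of $\Psi$, $f_2$, $g$ (with the normalization of functional derivatives and complementary slackness) to pass from the linearized-cost comparison to the comparison of the actual costs. The only substantive deviation is that you are more careful than the paper's own proof about the fact that \ref{optimalityalpha} holds only $m(t)\otimes dt$-almost everywhere: you solve the globally Lipschitz SDE driven by $-\partial_p H$ and $\sqrt{-\partial_M H}$ first, match its law with $m$, and only then conclude that $X$ also solves the nominal SDE with coefficients $b(\cdot,\cdot,\alpha)$, $\sigma(\cdot,\cdot,\alpha)$ because these agree with the Lipschitz fields $m\otimes dt$-a.e.\ and hence along the trajectories of $X$. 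The paper implicitly works with an everywhere-maximizing $\tilde\alpha$ and skips this step, so your version is the slightly tighter one; the rest is the same argument with different names for the optimal and competitor laws.
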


\begin{remark} Using standard parabolic PDE techniques and the regularity of $\phi$, we can show that, provided $m_0$ admits a density in $\cC_b^{2+\alpha}(\R^d)$, $m(t)$ in Theorem \ref{MainGeneral} admits a density $m(t,x)$ with respect to the Lebesgue measure such that $m \in \cC_b^{\frac{2+\alpha}{2},2+\alpha}([0,T] \times \R^d)$.
\end{remark} 

\begin{remark} In Theorems \ref{MainGeneral} and \ref{MainConv}, the stochastic basis $(\Omega, \mathcal{F}, \mathbb{F}, \mathbb{P})$ and the Brownian motion $(B_t)$ introduced at the beginning of this section are a priori fixed. In the terminology of stochastic control it means that we deal with strong solutions to the stochastic control problem.
\end{remark}

\begin{remark} In the spirit of the Karush-Kuhn-Tucker theorem, multiple inequality constraints $\displaystyle \Psi_i(m(T)) \leq 0$ $\forall i \in \llbracket 1,n \rrbracket $ can be considered provided they satisfy some qualification condition. We would say that the constraint is qualified at $\tilde{m} \in \mathcal{P}_1(\R^d)$ provided there exists some $m \in \mathcal{P}_1(\R^d)$ such that $\displaystyle \int_{\R^d} \frac{\delta \Psi_i}{\delta m}(\tilde{m},x)dm(x) <0$ for all $i \in \llbracket 1,n \rrbracket$ such that $\Psi_i(\tilde{m})= 0$. If $n=2$ a sufficient condition would be $\displaystyle \frac{\delta \Psi_i}{\delta m}(\tilde{m},.) \in L^2(\R^d)$ for $i=1,2$ and $\displaystyle \int_{\R^d} \frac{\delta \Psi_1}{\delta m}(\tilde{m},x) \frac{\delta \Psi_2}{\delta m}(\tilde{m},x) dx >0$. For $n\geq 2$ the condition would be satisfied everywhere if the constraints $\Psi_i$ are convex, satisfy Assumption \ref{FCg} and if there is some $m \in \mathcal{P}_1(\R^d)$ such that $\Psi_i(m) <0$ for all $i \in \llbracket 1,n \rrbracket$.
\end{remark}

\section{A relaxed problem: optimal control of the Fokker-Planck equation}

\label{sec: A relaxed Problem}

\begin{definition}
The relaxed problem is 
\begin{equation}
\label{RPg}
\tag{RP}
 \inf_{(m,\omega,W) \in \mathbb{K}} J_{RP}(m,\omega,W)
 \end{equation}
where $\mathbb{K}$ is the set of  triples $(m,\omega, W) \in \mathcal{C}^0([0,T],\mathcal{P}_1(\R^d)) \times \mathcal{M}([0,T]\times \R^d,\R^d) \times  \mathcal{M}([0,T]\times \R^d,\mathbb{S}_d(\R))$ such that $\omega$ and $W$ are absolutely continuous with respect to $m(t)\otimes dt$, 
\begin{equation}
 \partial_t m + \mdiv \omega -\sum_{i,j} \partial^2_{ij} W_{ij} = 0 
 \label{FPEncoreUne}
 \end{equation}
holds in the sense of distributions, $m(0)=m_0$ and $\Psi(m(T)) \leq 0$. The cost $J_{RP}$ is defined on $\mathbb{K}$ by

\begin{align*}
J_{RP}(m,\omega,W) &:= \int_0^T \int_{\R^d} L \left( t,x,\frac{d\omega}{dt \otimes dm}(t,x), \frac{dW}{dt \otimes dm}(t,x) \right) dm(t)(x)dt \\
& +\int_0^T f_2(t,m(t))dt +g(m(T)).
\end{align*}
\end{definition}

 Notice that the first term in the objective function $J_{RP}$ is convex in the variables $(m, \omega, W)$ and that the Fokker-Planck equation and the initial condition are linear in $(m , \omega, W )$. Therefore the problem is linear/convex when the final constraint as well as the costs $f_2$ and $g$ are convex.

\noindent We say that $(m,\omega, W)$ in $\mathcal{C}^0([0,T],\mathcal{P}_1(\R^d)) \times \mathcal{M}([0,T]\times \R^d,\R^d) \times  \mathcal{M}([0,T]\times \R^d,\mathbb{S}_d(\R))$ satisfies the Fokker-Planck equation (FPE) \ref{FPEncoreUne} with initital condition $m(0)=m_0$ if and only if, for all $\varphi \in \cC(\R^d)$ with compact support and all $\phi \in \mathcal{C}^{1,2} ((0,T) \times \R^d) $ with compact support we have

$$ \displaystyle \int_0^T \int_{\R^d} \partial_t \phi(t,x) dm(t)(x)dt +  \int_0^T \int_{\R^d} D \phi(t,x) . d\omega(t, x) + \int_0^T \int_{\R^d} D^2 \phi(t,x) . dW(t,x) = 0$$
 and the initial condition $\displaystyle \int_{\R^d} \varphi(x) dm(0)(x) = \int_{\R^d} \varphi(x) dm_0(x)$.

\noindent Moreover, if $\omega$ and $W$ are absolutely continuous with respect to $m(t) \otimes dt$ the above relations hold if $\varphi$ and $\phi$ are respectively taken in $\mathcal{C}_b(\R^d)$ and $\mathcal{C}^{1,2}_b((0,T) \times \R^d)$ (see \cite{Trevisan2016} Remark 2.3). In this case, we have for all $\phi \in \mathcal{C}_b^{1,2} ([0,T] \times \R^d)$ and for all $t_1,t_2 \in [0,T]$

\begin{align*}
\int_{\R^d} \phi(t_2,x)dm(t_2)(x) &= \int_{\R^d} \phi(t_1,x)dm(t_1)(x) \\
&+ \int_{t_1}^{t_2} \left[ \partial_t\phi(t,x) + D\phi(t,x). \frac{d\omega}{dm \otimes dt}(t,x) + \frac{dW}{dm \otimes dt}(t,x).D^2\phi(t,x) \right]dm(t)(x)dt. 
\end{align*}

\noindent Let us recall some known results about the link between solutions of the FPE and solutions to the SDE.

\begin{proposition} 
\label{Equivalence FPE/SDE}

\noindent
\begin{enumerate}

\item Suppose that $m$ is a solution to the Fokker-Planck equation
\begin{equation}
\left \{
\begin{array}{ll}
\partial_t m + \mdiv (b(t,x) m) - \sum_{i,j} \partial^2_{i,j} \left ( (\sigma^t\sigma(t,x) )_{ij} m \right) =0 \\
m(0) = m_0.
\end{array}
\right.
\label{FPE eq FPE/SDE}
\end{equation}
 with coefficients $b : [0,T] \times \R^d \rightarrow \R^d$, $\sigma : [0,T] \times \R^d \rightarrow \mathbb{M}_d(\R)$, Borel functions satisfying $$ \int_{0}^T \int_{\R^d} \left( |b(t,x)| + |\sigma(t,x)|^2 \right) dm(t)(x)dt <+\infty.$$
Then there is a filtered probability space $(\Omega, \mathcal{F}, \mathbb{F}, \mathbb{P} )$, an adapted Brownian motion $(B_t)_{t\geq 0}$ and an adapted process $(X_t)_{0 \leq t \leq T}$ such that
  
$$ \mathcal{L}(X_0 ) = m_0, $$
$$ dX_t = b(t,X_t)dt + \sqrt{2}\sigma(t,X_t) dB_t. $$
\noindent Moreover, for all $t \in [0,T]$, $\mathcal{L}(X_t) = m(t)$.

\item Conversely, suppose that $(X_s)_{s\geq 0}$ is a strong solution of the stochastic differential equation

\begin{equation*}
\left \{
\begin{array}{ll}
dX_s = b(s,X_s)ds + \sqrt{2}\sigma(s,X_s)dB_s \\
X|_{t=0} = X_0
\end{array}
\right.
\end{equation*}
on some filtered probability space $(\Omega, \mathcal{F}, \mathbb{F},\PP)$ endowed with an adapted Brownian motion $(B_t)$ with $ b : [0,T] \times \R^d \rightarrow \R^d$ and $\sigma : [0,T] \times \R^d \rightarrow \mathbb{M}_d(\R) $ Borel-measurable functions such that 

$$\displaystyle \PP  \left[ \int_0^T \left( |b(s,X_s)| + |\sigma (s,X_s)|^2 \right) ds < +\infty \right ] =1$$
and let $m(t) := \mathcal{L}(X_t) = X_t \# \mathbb{P}$, then $m$ satisfies the Fokker-Planck equation \ref{FPE eq FPE/SDE}.

\end{enumerate}

\end{proposition}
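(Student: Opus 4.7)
The plan is to handle the two implications separately. For the second direction (from SDE to FPE), the standard approach via It\^o's formula works directly: for $\phi \in \mathcal{C}^{1,2}([0,T]\times \R^d)$ with compact support, applying It\^o to $\phi(t,X_t)$ yields
$$\phi(t,X_t) - \phi(0,X_0) = \int_0^t \bigl(\partial_s\phi + b\cdot D\phi + \sigma^t\sigma \cdot D^2\phi\bigr)(s,X_s)\,ds + \sqrt{2}\int_0^t D\phi(s,X_s)\cdot \sigma(s,X_s)\,dB_s.$$
The hypothesis that $|b(s,X_s)|+|\sigma(s,X_s)|^2$ is $\PP$-a.s.\ integrable on $[0,T]$, combined with the compact support of $\phi$ and a standard localization, ensures that the stochastic integral is a true martingale with zero expectation. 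Taking expectations and setting $m(t) := \mathcal{L}(X_t)$ produces exactly the distributional formulation of the Fokker-Planck equation \ref{FPE eq FPE/SDE} with initial condition $m(0) = m_0$.

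The first direction (from FPE to SDE) is the difficult one, and the plan is to invoke the \emph{superposition principle}. In the sharp form proved by Trevisan \cite{Trevisan2016} (and earlier by Figalli \cite{Figalli2008} in the continuous-coefficient case), the assumed integrability bound $\int_0^T\int_{\R^d}(|b| + |\sigma|^2)\,dm(t)\,dt < +\infty$ guarantees the existence of a probability measure $\eta$ on the path space $\Omega := \mathcal{C}^0([0,T],\R^d)$ whose one-dimensional marginals coincide with $m$, i.e.\ $(e_t)_\#\eta = m(t)$ for every $t$, and which is concentrated on trajectories solving the martingale problem associated with the operator $\mathcal{L}_t f := b(t,\cdot)\cdot Df + \sigma^t\sigma(t,\cdot)\cdot D^2 f$. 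On the canonical space $(\Omega,\eta)$ the canonical process $X_t(\omega):=\omega(t)$ then has $M_t := X_t - X_0 - \int_0^t b(s,X_s)\,ds$ as a continuous local martingale with quadratic variation $2\int_0^t (\sigma^t\sigma)(s,X_s)\,ds$. A classical martingale-representation step, possibly after enlarging the probability space by an independent Brownian motion to cover the set where $\sigma$ may fail to be invertible (as in \cite{Stroock1997}), produces an adapted Brownian motion $B$ with $dM_t = \sqrt{2}\,\sigma(t,X_t)\,dB_t$. Thus $X$ satisfies the SDE, and the equality $\mathcal{L}(X_t) = m(t)$ follows from the marginal property of $\eta$.

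The main obstacle is the superposition principle itself: the coefficients $b$ and $\sigma$ are only assumed Borel and the Fokker-Planck equation is understood in the distributional sense, so the original Stroock-Varadhan construction of solutions to the martingale problem does not apply directly. Trevisan's theorem is designed precisely for this merely-measurable situation under the stated integrability bound; once it is invoked, the martingale representation step and the passage to the SDE are standard. All remaining work amounts to verifying the integrability hypotheses from the data, which is routine.
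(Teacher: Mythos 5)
Your proposal is correct and follows essentially the same route as the paper: the second implication by It\^o's formula, and the first by Trevisan's superposition principle \cite{Trevisan2016} to obtain a solution of the martingale problem, followed by the classical martingale-representation/space-enlargement step (as in \cite{Karatzas1991} Chapter 4 or \cite{Stroock1997}) to pass to a weak solution of the SDE. You merely spell out the martingale-representation step that the paper delegates to the cited references.
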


\begin{proof}
\noindent The second part follows from Itô's lemma and is standard. For the first part we need to combine the argument of \cite{Karatzas1991} and \cite{Trevisan2016}. From \cite{Trevisan2016} Theorem 2.5 we know that this statement is equivalent to the existence of a solution to the so-called martingale problem and from \cite{Karatzas1991} Chapter 4, we know that existence of a solution to the martingale problem is equivalent to the existence of a weak solution to the SDE.
\end{proof}

Let $V_{RP}(m_0)$ be the value of the relaxed problem. The link with the usual compactification / convexification (see \cite{ElKaroui1987} and \cite{Lacker2015}) method in stochastic optimal control is the following~:

\begin{proposition}

$$ V_{RP}(m_0) =  \inf_{\mathbf{q}_{A},m} \lbrace \int_0^T \int_{\R^d} \int_{A} f_1(t,x,a) d\mathbf{q}_{A}(t,x)(a) dm(t)(x)dt +\int_0^T f_2(t,m(t))dt+g(m(T)) \rbrace $$

\noindent where the infimum is taken over the couples $(\mathbf{q}_{A},m) \in L^0([0,T]\times \R^d,\mathcal{P}_1(A)) \times \mathcal{C}^0([0,T], \mathcal{P}_1(\R^d)) $ that satisfy in the sense of distributions the Fokker-Planck equation

$$ \partial_t m + \mdiv (\int_{A} b(t,x,a) d\mathbf{q}_{A}(t,x)(a) m) - \sum_{i,j} \partial^2_{ij} ( \left ( \int_{A}\sigma ^t\sigma(t,x,a) d\mathbf{q}_{A}(t,x)(a) \right)_{ij} m ) = 0 $$
together with the initial condition $m(0)=m_0$ and the terminal constraint $\Psi(m(T)) \leq 0$.

\end{proposition}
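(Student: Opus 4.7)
The plan is to use Lemma \ref{dual representation for $H_1^*$ and $H_2^*$} as the bridge between the two formulations: it identifies $L(t,x,q,N)$ with the minimum of $\int_A f_1(t,x,a)\,d\mathbf{q}(a)$ over those $\mathbf{q} \in \mathcal{P}_1(A)$ satisfying $\int_A b\,d\mathbf{q} = q$ and $\int_A \sigma^t\sigma\,d\mathbf{q} = N$. The equality of the two values is then obtained by proving the two opposite inequalities separately: the first (easy) direction amounts to plugging any admissible relaxed control into the Fokker-Planck formulation; the second (reverse) direction relies on a measurable selection of minimizers, which is the main technical step.

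For $V_{RP}(m_0) \leq \mathrm{(RHS)}$, start from any feasible $(\mathbf{q}_A, m)$ for the right-hand side problem and define
\begin{align*}
\omega(dt,dx) &:= \Big(\int_A b(t,x,a)\,d\mathbf{q}_A(t,x)(a)\Big)\,m(t)(dx)\,dt, \\
W(dt,dx) &:= \Big(\int_A \sigma^t\sigma(t,x,a)\,d\mathbf{q}_A(t,x)(a)\Big)\,m(t)(dx)\,dt.
\end{align*}
The bound $f_1(t,x,a) \geq \alpha_2'|b(t,x,a)|^{r_2^*} - C_H$ (equivalent to Assumption \ref{GrA}), combined with Jensen's inequality and the finiteness of the cost, forces $d\omega/d(m\otimes dt)$ to lie in $L^{r_2^*}(m\otimes dt)$ and hence $\omega$ to be a finite measure; for $W$, the uniform bound $\sigma^t\sigma \leq \Lambda^+ I_d$ suffices. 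The triple $(m,\omega,W)$ lies in $\mathbb{K}$ by construction, and applying Lemma \ref{dual representation for $H_1^*$ and $H_2^*$} pointwise yields $L(t,x,d\omega/d(m\otimes dt),dW/d(m\otimes dt)) \leq \int_A f_1\,d\mathbf{q}_A(t,x)$, so $J_{RP}(m,\omega,W)$ is dominated by the cost of $(\mathbf{q}_A,m)$ and the constraint $\Psi(m(T)) \leq 0$ is inherited.

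For the reverse inequality, take $(m,\omega,W) \in \mathbb{K}$ with $J_{RP}(m,\omega,W) < \infty$, set $q(t,x) := d\omega/d(m\otimes dt)$ and $N(t,x) := dW/d(m\otimes dt)$, and work on the $m\otimes dt$-full measure set where $L(t,x,q(t,x),N(t,x)) < \infty$. At each such point Lemma \ref{dual representation for $H_1^*$ and $H_2^*$} produces a nonempty set
$$\Gamma(t,x) := \Big\{\mathbf{q} \in \mathcal{P}_1(A) : \int_A b(t,x,\cdot)\,d\mathbf{q} = q(t,x),\ \int_A \sigma^t\sigma(t,x,\cdot)\,d\mathbf{q} = N(t,x),\ \int_A f_1(t,x,\cdot)\,d\mathbf{q} = L(t,x,q(t,x),N(t,x))\Big\}$$
of minimizers in $\mathcal{P}_1(A)$. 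The crux of the proof is to produce a jointly Borel selection $(t,x) \mapsto \mathbf{q}_A(t,x) \in \Gamma(t,x)$; this is where I expect the main technical difficulty.

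I would handle it via the Kuratowski--Ryll-Nardzewski selection theorem. The coercivity of $f_1$ in $a$ (Assumption \ref{IA}) together with the lower semicontinuity properties of $\mathbf{q} \mapsto \int_A f_1\,d\mathbf{q}$ inherited from \cite{Ambrosio2005} (as invoked already in the proof of Lemma \ref{dual representation for $H_1^*$ and $H_2^*$}) ensure that each $\Gamma(t,x)$ is compact for the weak topology on $\mathcal{P}_1(A)$; measurability of the multifunction $\Gamma$ follows from the joint Borel measurability of $(t,x,\mathbf{q}) \mapsto \big(\int_A b\,d\mathbf{q} - q(t,x),\ \int_A \sigma^t\sigma\,d\mathbf{q} - N(t,x),\ \int_A f_1\,d\mathbf{q} - L(t,x,q(t,x),N(t,x))\big)$, which in turn uses the Borel measurability of $(t,x) \mapsto (q(t,x), N(t,x), L(t,x,q(t,x),N(t,x)))$. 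The resulting Borel selector $\mathbf{q}_A$, extended arbitrarily off the full measure set, yields an admissible $(\mathbf{q}_A,m)$ for the right-hand side problem: the averaged drift and diffusion coincide with $q$ and $N$ respectively, so the relaxed Fokker-Planck equation together with the initial and terminal constraints are satisfied, and the cost identity $\int_A f_1\,d\mathbf{q}_A(t,x) = L(t,x,q(t,x),N(t,x))$ gives matching total cost, yielding $V_{RP}(m_0) \geq \mathrm{(RHS)}$ and completing the proof.
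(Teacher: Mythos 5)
Your proof is correct and follows essentially the same route as the paper: invoke Lemma~\ref{dual representation for $H_1^*$ and $H_2^*$} to identify $L$ with the minimal relaxed cost, then obtain a measurable selection of minimizers to pass between the two formulations. The paper simply cites the measurable selection argument of \cite{Stroock1997}~Theorem~12.1.10 where you appeal to Kuratowski--Ryll-Nardzewski; these are interchangeable here.
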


\begin{proof}

\noindent The proof follows from the dual representation of $L$ in Lemma \ref{dual representation for $H_1^*$ and $H_2^*$} and a measurable selection argument as in \cite{Stroock1997} Theorem 12.1.10. For every competitor $(m,\omega,W)$ such that $\displaystyle L(t,x, \frac{d\omega}{dt\otimes dm}(t,x),\frac{dW}{dt\otimes dm}(t,x)) < +\infty $  one can find a measurable function $\mathbf{q}_{A} : [0,T]\times \R^d \rightarrow \mathcal{P}_1(A)$ such that, for every $(t,x) \in [0,T]\times \R^d$ one has

$$  L(t,x,\frac{d\omega}{dt \otimes dm}(t,x), \frac{dW}{dt \otimes dt}(t,x)) = \int_{A} f_1(t,x,a) d\mathbf{q}_{A}(t,x)(a), $$
and
$$ \left( \frac{d\omega}{dt \otimes dm}(t,x),  \frac{dW}{dt \otimes dm}(t,x) \right)  = \left( \int_{A} b(t,x,a) d\mathbf{q}_{A}(t,x)(a), \int_{A} \sigma^t\sigma(t,x,a) d\mathbf{q}_{A}(t,x)(a) \right). $$

\end{proof}

\subsection{Analysis of the relaxed problem}

\noindent We will need the following facts :

\begin{lemma}
There exists $(m,\omega,W) \in \mathbb{K} $  such that $J_{RP}(m, \omega,W) < +\infty$.
\label{Reachability}
\end{lemma}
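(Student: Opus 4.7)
The plan is to exhibit an explicit admissible triple with finite cost, using Assumption~\ref{FCg} (existence of a measure $\bar m$ with $\Psi(\bar m)<0$) together with the weak controllability of Lemma~\ref{GrH_1 explained}. The SDE--FPE correspondence of Proposition~\ref{Equivalence FPE/SDE} means one may equivalently work at the level of a controlled diffusion.

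First, fix $\bar m\in\mathcal{P}_1(\R^d)$ with $\Psi(\bar m)<0$. By continuity of $\Psi$ one may convolve with a small Gaussian (and truncate if necessary) to assume $\bar m$ has a smooth, strictly positive density with all polynomial moments while still satisfying $\Psi(\bar m)<0$. Next, construct a smooth density path $t\mapsto m(t,\cdot)$ on $[0,T]$ with $m(0)=m_0$ and $m(T)=\bar m$, in two stages. On $[0,T/2]$, pick a baseline constant relaxed control $\mathbf{q}^0_A\in\mathcal{P}_1(A)$ supported where $f_1$ is finite, and let $m$ solve the Fokker--Planck equation with the corresponding drift $v_0(t,x):=\int b\,d\mathbf{q}^0_A$ and diffusion $N_0(t,x):=\int\sigma\sigma^t\,d\mathbf{q}^0_A$; the uniform ellipticity $N_0\geq\Lambda^-I_d$ from Assumption~\ref{IA} together with classical parabolic smoothing produces $m(T/2,\cdot)$ smooth, strictly positive, with rapid decay. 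On $[T/2,T]$, prescribe a smooth strictly positive interpolation of densities from $m(T/2,\cdot)$ to $\bar m$, retain the diffusion $N\equiv N_0$, and \emph{define} the drift $v(t,x)$ so that the Fokker--Planck equation $\operatorname{div}(v m)=-\partial_t m+\sum_{ij}\partial^2_{ij}(N_{0,ij}\,m)$ holds; solving this divergence problem (e.g.\ as $v=w/m$ with $w$ a gradient of a potential produced by a weighted Poisson problem) yields a smooth bounded drift on the support of $m$.

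A measurable-selection argument based on Lemma~\ref{dual representation for $H_1^*$ and $H_2^*$} then produces $(t,x)\mapsto\mathbf{q}_A(t,x)$ realizing the pair $(v(t,x),N_0(t,x))$ pointwise, at instantaneous cost bounded by $\alpha_1'|v(t,x)|^{r_1^*}+C_H$. Setting $\omega:=v\,m\,dx\,dt$ and $W:=N_0\,m\,dx\,dt$ gives $(m,\omega,W)\in\mathbb{K}$: the Fokker--Planck equation holds by construction, $m(0)=m_0$, and $\Psi(m(T))=\Psi(\bar m)<0$. Finiteness of $J_{RP}$ follows from the boundedness of $f_2,g$ (Assumption~\ref{IA}) and the bound $\int_0^T\!\!\int L(t,x,v,N_0)\,dm\,dt\leq \int_0^T\!\!\int(\alpha_1'|v|^{r_1^*}+C_H)\,dm\,dt<+\infty$, using~\eqref{EstimateH'} and the boundedness of $v$ on the fast-decaying support of $m$.

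The main obstacle is the joint realizability of the prescribed pair $(v,N_0)$ at the level of relaxed controls: Lemma~\ref{dual representation for $H_1^*$ and $H_2^*$} requires $(q,N)$ to lie in the image of $\mathbf{q}_A\mapsto(\int b\,d\mathbf{q}_A,\int\sigma\sigma^t\,d\mathbf{q}_A)$, and while Lemma~\ref{GrH_1 explained} hands over the drift coordinate freely, the matching diffusion is implicit in the selection. The device motivating the construction above is to anchor the diffusion on the baseline $N_0$ produced by $\mathbf{q}^0_A$ and to realize the required drift by perturbing $\mathbf{q}^0_A$ with symmetric pairs of controls whose drift perturbations survive but whose diffusion contributions cancel; when such diffusion-neutral drift perturbations are not directly available, an alternative is to let $N$ also vary along $[T/2,T]$ and broaden the class of Fokker--Planck coefficients, at the price of a slightly more delicate construction of the density path.
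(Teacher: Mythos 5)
Your construction hinges on a step that Lemma~\ref{GrH_1 explained} does not support: you prescribe a drift $v(t,x)$ and \emph{also} a diffusion $N_0(t,x)$, and then invoke measurable selection to produce $\mathbf q_A(t,x)$ realizing the \emph{pair} $(v(t,x),N_0(t,x))$ with cost $\le\alpha_1'|v(t,x)|^{r_1^*}+C_H$. But Lemma~\ref{GrH_1 explained} only says that for each target drift $q$ there is \emph{some} relaxed control $\mathbf q_A$ with $\int b\,d\mathbf q_A=q$ and $\int f_1\,d\mathbf q_A\le\alpha_1'|q|^{r_1^*}+C_H$; the associated diffusion $\int\sigma{}^t\sigma\,d\mathbf q_A$ is dictated by that selection and is not a free parameter. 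There is no reason a prescribed $N_0$ should lie in the image $\bigl\{\int\sigma{}^t\sigma\,d\mathbf q_A\bigr\}$ over the $\mathbf q_A$ that realize $q$ cheaply, and in general $L(t,x,v,N_0)$ could be $+\infty$ (Lemma~\ref{dual representation for $H_1^*$ and $H_2^*}$ shows finiteness is precisely realizability of the pair). The "diffusion-neutral perturbation" device you sketch at the end is exactly where the difficulty sits, and it is not resolved. There is also a secondary weakness: the drift obtained by solving $\operatorname{div}(vm)=-\partial_t m+\sum\partial^2_{ij}(N_{0,ij}m)$ from a prescribed density interpolation is $v=w/m$, and its boundedness (or even $r_1^*$-integrability against $m$) is asserted rather than proved; for Gaussian-type interpolations $v$ is typically of linear growth, so some decay argument would be needed.

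The paper's proof sidesteps the joint-realizability problem entirely: by continuity of $\Psi$ it finds finitely many points $x_i$ with $\Psi\bigl(\tfrac1n\sum\delta_{x_i}\bigr)<0$, then for each $x_i$ it selects (via Lemma~\ref{GrH_1 explained}) a relaxed control realizing the strong contracting drift $c(x_i-x)$ and simply \emph{takes the diffusion $\tilde\sigma_c$ that this selection produces}. The resulting SDE with bounded measurable, uniformly elliptic coefficients has a weak solution by Krylov, and a BDG estimate shows $\mathcal L(X^c_T)$ clusters near $\delta_{x_i}$ for $c$ large. Averaging the resulting triples over $i$ and using convexity of the kinetic term gives a finite-cost competitor whose terminal law is close to $\tfrac1n\sum\delta_{x_i}$, hence satisfies the constraint strictly. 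The crucial structural trick — defining the diffusion \emph{after} the drift-selection rather than before — is exactly what your proposal is missing, and without it the proposal has a genuine gap.
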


\begin{proof}

We have to check that we can indeed reach the final constraint with a finite cost. By continuity of $\Psi$ we can find $x_0,...,x_n \in \R^d$ such that $\Psi( \frac{1}{n}\sum_{i=1}^n \delta_{x_i} ) <0$. Fix some $\delta >0$. Let $i$ be in $\llbracket 0,n \rrbracket $. For all $c>0$ we can find $\mathbf{q}^c \in L^0([0,T] \times \R^d, \mathcal{P}_1(A))$ such that $\displaystyle \int_{A} b(t,x,a) d\mathbf{q}^c(t,x)(a) = c(x_i-x)$ and $\displaystyle     \int_A f_1(t,x,a) d\mathbf{q}^c(t,x)(a) \leq \alpha_1' c^{r_1^*}|x_i-x|^{r_1^*} +C_H$ for all $(t,x) \in [0,T] \times \R^d$ (see Lemma \ref{GrH_1 explained}).
We define the measurable function $\displaystyle \tilde{\sigma}_c (t,x) := \left( \int_A \sigma^t\sigma(t,x,a)d\mathbf{q}^c (t,x)(a) \right)^{\frac{1}{2}} $. Notice that $\Lambda^- I_d \leq \tilde{\sigma}_c^t\tilde{\sigma}_c(t,x) \leq \Lambda^+ I_d$ for all $(t,x) \in [0,T] \times \R^d$. We can use the result of Krylov in part 2.6 of \cite{Krylov1980} (existence of weak solutions to stochastic differential equations with bounded measurable coefficients and uniformly non-degenerate volatility) and find a filtered probability space $(\Omega^1, \mathcal{F}^1,\mathbb{F}^1, \mathbb{P}^1)$ satisfying the usual conditions, an adapted Brownian motion $(B_t)$, a $\mathcal{F}^1_0$ measurable random variable $X_0$ with law $m_0$ and a solution $Y_t^c$ of the stochastic differential equation

$$dY_t^c = ce^{ct}x_idt + \sqrt{2} \tilde{\sigma}_c'(t,Y_t^c)dB_t$$
starting from $X_0$ with $\tilde{\sigma}_c'(t,y) = e^{ct}\tilde{\sigma}_c(t,e^{-ct}y)$. By Ito's lemma, $X_t^c:= e^{-ct}Y_t^c$ solves the SDE 

$$dX^c_t = c(x_i - X_t^c)dt + \sqrt{2} \tilde{\sigma}_c(t,X_t^c)dB_t$$
starting from $X_0$ and we have, for all $t\in [0,T]$

$$X_t^c = x_i + (X_0-x_i)e^{-ct} + \sqrt{2}e^{-ct} \int_0^t \tilde{\sigma}_c(s,X_s^c) e^{cs}dB_s.$$

\noindent Using the Burkholder-Davis-Gundy inequality and the upper bound on $\sigma^t\sigma $ we get

\begin{align*}
\mathbb{E}^1(|X_t^c-x_i|^{r_1^*}) & \leq 2^{r_1^*-1} e^{-r_1^*ct} \mathbb{E}^1(|X_0-x_i|^{r_1^*}) + 2^{\frac{3r_1^*-2}{2}}e^{-r_1^*ct} \E^1 \left ( | \int_0^t \tilde{\sigma}_c(s,X_s^c)e^{cs} dB_s |^{r_1^*} \right) \\
& \leq 2^{r_1^*-1} e^{-r_1^*ct} \mathbb{E}^1(|X_0-x_i|^{r_1^*}) + 2^{\frac{3r_1^*-2}{2}}e^{-r_1^*ct} \E^1 \left ( ( \int_0^t \Tr(\tilde{\sigma}^t\tilde{\sigma}_c(s,X_s^c)e^{2cs} ds)^{\frac{r_1^*}{2}} \right) \\
&\leq 2^{r_1^*-1} e^{-r_1^*ct} \mathbb{E}^1(|X_0-x_i|^{r_1^*}) + 2^{\frac{3r_1^*-2}{2}} (d\Lambda^+)^{\frac{r_1^*}{2}} \left( \frac{e^{-2ct}-1}{2c} \right)^{\frac{r_1^*}{2}}
\end{align*}
where $\E^1$ is the expectation under $\mathbb{P}^1$. In particular, taking $t=T$ we see that, for $c$ sufficiently large we have $d_{r_1^*}(\mathcal{L}(X_T^c), \delta_{x_i}) \leq \delta$. Now, for such a $c$, we let $m^{i}(t) = \mathcal{L}(X_t^c)$, $\displaystyle \omega^{i} =c(x_i-x)m^{i}$, $W^{i} = \tilde{\sigma}_c^t\tilde{\sigma}_c(t,x) m^{i}$ . Since $f_2$ and $g$ are bounded functions, and thanks to the upper bound on $f_1$ we have that 

$$J_{RP}( m^{i},\omega^{i},W^{i} ) \leq C\left( 1+ \int_0^T \mathbb{E}^1(|X_t^c-x_i|^{r_1^*}) dt \right) < +\infty. $$
Now we do the same for all $i \in \llbracket 0, n \rrbracket$ and we let $\displaystyle (m,\omega,W) := \frac{1}{n} \sum_{i=1}^n (m^{i},\omega^{i},W^{i})$. The triple $(m,\omega,W)$ solves the Fokker-Planck equation starting from $m_0$. Now by convexity of 

$$\displaystyle (m,\omega,W) \rightarrow \int_0^T \int_{\R^d} L \left( t,x,\frac{d\omega}{dm \otimes dt }(t,x), \frac{dW}{dm \otimes dt}(t,x) \right) dm(t)(x)dt $$ 
and using the fact that $f_2$ and $g$ are bounded we get that $\displaystyle J_{RP}(m,\omega,W) <+\infty$. Finally
$$ d_{r_1^*} \left( \frac{1}{n} \sum_{i=1}^n m^{i}(T) , \frac{1}{n} \sum_{i=1}^n \delta_{x_i} \right) \leq C(n)\delta $$
for some non negative constant $C(n)$. For $\delta$ small enough we get that $J_{RP}(m,\omega,W) <+\infty$ and $\Psi(m(T)) <0$ which concludes the proof.

\end{proof}

\begin{lemma}
\label{AprioriEstimates}

\begin{enumerate}
\item Any point $(m,\omega,W) \in \mathbb{K}$ with $J_{RP}(m,\omega,W) < +\infty$, satisfies the following estimate for some constant $C_{r_2}$ depending only on $r_2$: for any $0<s \leq t<T$,
 
\begin{equation}
 \mathbf{d}_{r_2^*} (m(s),m(t))^{r_2^*} \leq C_{r_2} (t-s)^{r_2^*-1} \int_{\R^d} \int_0^T \left| \frac{d\omega}{dt \otimes dm}(u,x) \right|^{r_2^*} dm(u)(x)du + C_{r_2} \Lambda^+ (t-s)^{\frac{r_2^*}{2}}.
\label{Estimate1}
\end{equation}

\item There exists some $M>0$ such that 
\begin{equation}
\displaystyle \sup_{t \in [0,T]} \int_{\R^d} |x|^{r_2^*} dm(t)(x) + |\omega|([0,T] \times \R^d) + |W|([0,T] \times \R^d)  \leq M \label{Estimate2}
\end{equation}
whenever $J_{RP}(m,\omega,W) \leq \inf J_{RP} +1.$

\end{enumerate}

\end{lemma}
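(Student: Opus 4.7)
The plan is to realize $m$ as the family of time-marginals of a diffusion process, using Lemma \ref{dual representation for $H_1^*$ and $H_2^*$} (dual representation of $L$) combined with Proposition \ref{Equivalence FPE/SDE} (equivalence FPE/SDE), and then to derive both estimates from standard moment bounds on the associated SDE.

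First I set $\tilde b(t,x) := \frac{d\omega}{dt\otimes dm}(t,x)$ and $\tilde N(t,x) := \frac{dW}{dt\otimes dm}(t,x)$. Since $J_{RP}(m,\omega,W)<+\infty$, the integrand $L(t,x,\tilde b,\tilde N)$ is finite $m\otimes dt$-almost everywhere; Lemma \ref{dual representation for $H_1^*$ and $H_2^*$}, together with a Borel measurable selection argument as in the proof of the proposition preceding this subsection (which itself invokes \cite{Stroock1997} Theorem 12.1.10), produces a Borel map $\mathbf{q}_A : [0,T]\times \R^d \to \cP_1(A)$ realizing the minimum. In particular $\tilde N = \int_A \sigma^t\sigma\, d\mathbf{q}_A$, so Assumption \ref{Uniformellipticity} gives $\Lambda^- I_d \leq \tilde N \leq \Lambda^+ I_d$ and the smooth square root $\tilde\sigma := \sqrt{\tilde N}$ is Borel measurable with $\Tr(\tilde\sigma^t\tilde\sigma) \leq d\Lambda^+$. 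The Fokker-Planck equation \ref{FPEncoreUne} then takes the classical form $\partial_t m + \mdiv(\tilde b m) - \sum_{ij}\partial^2_{ij}((\tilde\sigma^t\tilde\sigma)_{ij} m) = 0$ in the distributional sense, and the integrability hypothesis of Proposition \ref{Equivalence FPE/SDE} is satisfied since $\tilde\sigma$ is bounded and $\tilde b \in L^{r_2^*}(m\otimes dt) \subset L^1(m\otimes dt)$ (a consequence of the coercivity bound shown below). Proposition \ref{Equivalence FPE/SDE} thus yields a stochastic basis, a Brownian motion $(B_u)$ and an adapted process $(X_u)$ with $\cL(X_u) = m(u)$ satisfying $dX_u = \tilde b(u,X_u) du + \sqrt{2}\tilde\sigma(u,X_u) dB_u$. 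For $0\leq s\leq t\leq T$, writing $X_t - X_s$ as drift plus stochastic integral, applying Hölder's inequality with exponents $(r_2,r_2^*)$ to the drift term and the Burkholder-Davis-Gundy inequality to the stochastic integral (using $\Tr(\tilde\sigma^t\tilde\sigma) \leq d\Lambda^+$), bounding $\int_s^t$ by $\int_0^T$, and taking expectations, gives an upper bound on $\E|X_t - X_s|^{r_2^*}$ of exactly the form in \eqref{Estimate1}. Since $(X_s,X_t)$ is an admissible coupling of $(m(s),m(t))$, this yields $\mathbf{d}_{r_2^*}(m(s),m(t))^{r_2^*} \leq \E|X_t - X_s|^{r_2^*}$, proving \eqref{Estimate1}.

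For part (2), Lemma \ref{Reachability} guarantees $\inf J_{RP} < +\infty$, so the hypothesis $J_{RP} \leq \inf J_{RP} + 1$ is meaningful. Jensen's inequality applied to $|\cdot|^{r_2^*}$ in the dual representation of $L$, combined with the reformulation of Assumption \ref{GrA} (namely $f_1(t,x,a) \geq \alpha_2'|b(t,x,a)|^{r_2^*} - C_H$), yields $L(t,x,q,N) \geq \alpha_2'|q|^{r_2^*} - C_H$ wherever $L$ is finite. Together with the boundedness of $f_2$ and $g$ from Assumption \ref{IA}, this gives $\int_0^T\int_{\R^d}|\tilde b|^{r_2^*} dm(u) du \leq C_1$ for some constant $C_1$ depending only on the data. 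Applying \eqref{Estimate1} with $s=0$, then using the triangle inequality for $\mathbf{d}_{r_2^*}$ together with $m_0 \in \cP_{r_1^*\vee 2}(\R^d) \subset \cP_{r_2^*}(\R^d)$ (the inclusion holding because $r_1 \leq r_2$ implies $r_2^* \leq r_1^*$), controls $\sup_{t\in[0,T]}\int_{\R^d}|x|^{r_2^*}\, dm(t)(x)$ uniformly. Finally, Hölder's inequality gives $|\omega|([0,T]\times\R^d) \leq \int_0^T\int|\tilde b|\, dm\, du \leq T^{1/r_2}C_1^{1/r_2^*}$, while the entrywise bound on $\tilde N$ yields $|W|([0,T]\times\R^d) \leq \sqrt{d}\Lambda^+ T$. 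Summing these bounds provides the required $M$.

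The main technical point is the first paragraph: verifying that the dual representation of $L$ can be realized by a genuinely Borel $\mathbf{q}_A$, so that the coefficients $(\tilde b,\tilde\sigma)$ are measurable and the hypotheses of Proposition \ref{Equivalence FPE/SDE} hold, which then legitimates the passage from the Fokker-Planck formulation to the SDE. Once this representation is secured, both estimates reduce to routine Hölder and Burkholder-Davis-Gundy computations combined with the coercivity lower bound on $L$.
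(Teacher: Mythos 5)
Your proof is correct and follows essentially the same route as the paper: read off the pointwise bound $\Lambda^- I_d \leq dW/(dt\otimes dm) \leq \Lambda^+ I_d$ from the dual representation of $L$, invoke Proposition \ref{Equivalence FPE/SDE} to represent $m$ as the time-marginals of a weak SDE solution, and then obtain \eqref{Estimate1} from Jensen/H\"older on the drift and Burkholder-Davis-Gundy on the stochastic integral, with part (2) following from the coercivity of $L$ together with H\"older and the eigenvalue bound on $W$. The only (harmless) detour is your appeal to a Borel measurable selection of $\mathbf{q}_A$: for this lemma one never needs a measurable selection, since the Radon-Nikodym derivatives are already Borel and the bound on $\tilde N$ is a pointwise consequence of finiteness of $L$, so Proposition \ref{Equivalence FPE/SDE} applies directly.
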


\begin{proof}

\noindent First observe that, since $J_{RP}(m,\omega,W) < +\infty$, by the dual formula for $L$ of Lemma \ref{dual representation for $H_1^*$ and $H_2^*$}, we know that $m(t) \otimes dt$-almost-everywhere~: $\displaystyle  \Lambda^- I_d \leq \frac{dW}{dt\otimes dm} \leq \Lambda^+ I_d$. Let $(\Omega, \mathcal{F}, \mathbb{F}, \mathbb{P}, (X,B))$ be a weak solution to the SDE 

$$dX_t = \frac{d\omega}{dt\otimes dm}(t,X_t)dt + \sqrt{2\frac{dW}{dt\otimes dm}}(t,X_t)dB_t $$
with $\mathcal{L}(X_t) = m(t)$ for all $t \in [0,T]$. The existence of such a solution is ensured by the fact that $m$ solves the FPE with coefficients  $\displaystyle \frac{d\omega}{dt \otimes dm}, \frac{dW}{dt \otimes dm}$ (see Proposition \ref{Equivalence FPE/SDE}). Now, for all $0 \leq s < t \leq T$, with $M_{r_2}$ and $C_{r_2}$ positive constants depending only on $r_2$ we have

\begin{align*}
\mathbf{d}_{r_2^*} (m(s),m(t)) ^{r_2^*}  & \leq \E \left[ |X_t - X_s |^{r_2^*} \right] \\
& \leq 2^{r_2^* -1} \E \left[ |\int_s^t \frac{d\omega}{dt \otimes dm}(u,X_u)du |^{r_2^*} \right] + 2^{r_2^* -1} \E \left[ |\int_s^t \sqrt{2\frac{dW}{dt \otimes dm}}(u,X_u)dB_u |^{r_2^*} \right] \\
& \leq (2(t-s))^{r_2^* -1} \E \left[ \int_s^t \left| \frac{d\omega}{dt \otimes dm}(u,X_u) \right|^{r_2^*} du \right] \\
&+ 2^{r_2^*} M_{r_2} \E \left( \left[ \int_s^t \Tr(\frac{dW}{dt \otimes dm})(u,X_u)du \right]^{\frac{r_2^*}{2}} \right) \\
& \leq C_{r_2} (t-s)^{r_2^*-1} \int_{\R^d} \int_0^T \left| \frac{d\omega}{dt \otimes dm}(u,x) \right|^{r_2^*} dm(u)(x)du + C_{r_2} \Lambda^+ (t-s)^{\frac{r_2^*}{2}},
\end{align*}
where we used Jensen inequality for the term involving $\omega$ and Burkholder-Davis-Gundy inequality for the other one.

For the second part of the lemma, let us take $(m,\omega,W) \in \mathbb{K}$ such that $J_{RP}(m,\omega,W) \leq \inf J_{RP} + 1$. From the growth assumptions on $L$, there exists $M_1>0$ (which does not depend on the particular $(m,\omega,W)$) such that $$ \int_{\R^d} \int_0^T \left| \frac{d\omega}{dt \otimes dm}(u,x) \right|^{r_2^*} dm(u)(x)du \leq M_1.$$ 
\noindent Using the estimate proven in the first part of the lemma, we see that for all $t,s \in [0,T]$,  $\mathbf{d}_{r_2^*}(m(s),m(t)) \leq M_1'$ for some $M_1' >0$ which, once again, does not depend on the particular choice of $(m,\omega,W)$. This yields the uniform estimate on $\displaystyle \int_{\R^d} |x|^{r_2^*} dm(t)(x) < M_1'' $ for some new $M_1'' >0$. The uniform estimate on $|\omega|$ follows by Hölder's inequality
\begin{align*}
|\omega|([0,T]\times \R^d) &\leq \left ( \int_0^T \int_{\R^d} dm(t)(x)dt \right )^{1/r_2} \left ( \int_0^T \int_{\R^d} \left |\frac{d\omega}{dt \otimes dm}(t,x) \right |^{r_2^*} dm(t)(x)dt \right )^{1/r_2^*} \\
&\leq T^{1/r_2}M_1^{1/r_2^*}.
\end{align*}

\noindent Finally, $m(t)\otimes dt$-almost everywhere $ \displaystyle \Sp \left ( \frac{dW}{dt\otimes dm}(t,x) \right )  \in [\Lambda^-,\Lambda^+ ]$ which means that $|W|([0,T] \times \R^d) \leq \sqrt{d}\Lambda^+ $. The claim follows taking $M = M_1'' + T^{1/r_2}M_1^{1/r_2^*} + \sqrt{d}\Lambda^+$.

\end{proof}

\noindent From this we can conclude with:

\begin{theorem}
\label{Existence for RP}
$J_{RP}$ achieves its minimum at some point $(\tilde{m},\tilde{\omega},\bar{W})$ in $\mathbb{K}$.
\end{theorem}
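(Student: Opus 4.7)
The plan is to apply the direct method of the calculus of variations. Take a minimizing sequence $(m_n,\omega_n,W_n) \in \mathbb{K}$; by Lemma \ref{Reachability} the infimum is finite, so we may assume $J_{RP}(m_n,\omega_n,W_n) \leq \inf J_{RP} + 1$ for all $n$. Lemma \ref{AprioriEstimates} immediately yields uniform bounds on the total variations $|\omega_n|([0,T]\times\R^d)$ and $|W_n|([0,T]\times\R^d)$, a uniform bound on $\sup_{t}\int |x|^{r_2^*}dm_n(t)(x)$, and uniform Hölder equicontinuity of $t\mapsto m_n(t)$ in the Wasserstein distance $\mathbf{d}_{r_2^*}$.

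From the uniform bounds on the total variations, Banach–Alaoglu gives, along a subsequence, $\omega_n \overset{\ast}{\rightharpoonup} \tilde\omega$ and $W_n \overset{\ast}{\rightharpoonup} \tilde W$ in $\mathcal{M}([0,T]\times\R^d)$. The moment bound ensures tightness of $\{m_n(t)\}_{n,t}$, and combined with the Hölder equicontinuity, an Arzelà–Ascoli argument in $\mathcal{C}^0([0,T],\mathcal{P}_1(\R^d))$ (with the Wasserstein-$1$ topology) produces $\tilde m \in \mathcal{C}^0([0,T],\mathcal{P}_{r_2^*}(\R^d))$ with $m_n \to \tilde m$ uniformly in $t$ for $\mathbf{d}_1$. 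Since the Fokker–Planck equation and the initial condition $m(0)=m_0$ are linear in $(m,\omega,W)$, they pass to the limit when tested against $\phi \in \mathcal{C}^{1,2}_c((0,T)\times\R^d)$ and $\varphi \in \mathcal{C}_c(\R^d)$, so $(\tilde m,\tilde\omega,\tilde W)$ satisfies \eqref{FPEncoreUne} with $\tilde m(0)=m_0$. The terminal constraint $\Psi(\tilde m(T)) \leq 0$ follows from the continuity of $\Psi$ on $\mathcal{P}_1(\R^d)$ and the convergence $m_n(T)\to\tilde m(T)$ in $\mathbf{d}_1$.

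It remains to show that $(\tilde m,\tilde\omega,\tilde W) \in \mathbb{K}$, i.e.\ that $\tilde\omega,\tilde W \ll \tilde m(t)\otimes dt$, and that $J_{RP}$ is sequentially lower semi-continuous along the extracted subsequence. The terms $\int_0^T f_2(t,m_n(t))\,dt$ and $g(m_n(T))$ converge to the corresponding limits by the continuity (and boundedness) of $f_2$ and $g$ on $\mathcal{P}_1(\R^d)$. For the main term, one uses the dual representation
\[
\int_0^T\!\!\int_{\R^d} L\!\left(t,x,\tfrac{d\omega}{dt\otimes dm},\tfrac{dW}{dt\otimes dm}\right)dm(t)(x)dt = \sup_{(a,b,c)} \int_0^T\!\!\int_{\R^d} a\,dm + b\cdot d\omega + c\cdot dW,
\]
where the supremum runs over continuous bounded test functions $(a,b,c)$ with $a(t,x) + H(t,x,b(t,x),c(t,x)) \leq 0$. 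Each linear functional on the right is weakly-$\ast$ continuous along $(m_n,\omega_n,W_n)$, so the supremum over such tests is weakly-$\ast$ lower semi-continuous, yielding the desired inequality; in particular the finiteness of the limit forces the required absolute continuity of $\tilde\omega$ and $\tilde W$ with respect to $\tilde m(t)\otimes dt$. Combining everything,
\[
J_{RP}(\tilde m,\tilde\omega,\tilde W) \leq \liminf_{n\to\infty} J_{RP}(m_n,\omega_n,W_n) = \inf_{\mathbb{K}} J_{RP},
\]
so $(\tilde m,\tilde\omega,\tilde W)$ is a minimizer.

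The main obstacle is precisely this lower semi-continuity step: the reference measure $m$ is itself varying, and one has to show simultaneously that $\tilde\omega,\tilde W$ are absolutely continuous with respect to $\tilde m(t)\otimes dt$ and that the convex integrand functional is lower semi-continuous. Passing through the dual representation of $L$ (which reduces the problem to a supremum of weakly-$\ast$ continuous linear functionals) is the cleanest way around this difficulty, and it is the same mechanism used in the Mean Field Games literature cited in the introduction.
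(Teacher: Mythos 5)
Your argument follows the same direct-method outline as the paper: minimizing sequence, a priori estimates from Lemma \ref{AprioriEstimates}, Banach--Alaoglu plus Arzel\`a--Ascoli for compactness, linearity of the Fokker--Planck constraint, continuity of $\Psi$, $f_2$, $g$ for passing the constraint and the linear cost terms to the limit, and finally weak-$*$ lower semi-continuity of the convex integrand together with absolute continuity of the limiting measures; the only difference is that you unfold the last step by writing $L$ as a supremum of affine test functions (which is precisely the mechanism behind Theorem~2.34 of \cite{Ambrosio2000} that the paper cites). One small imprecision: since $L(t,x,q,N)=H^*(t,x,-q,-N)$, the admissible test functions in your dual representation should satisfy $a(t,x)+H(t,x,-b(t,x),-c(t,x))\leq 0$ rather than $a+H(t,x,b,c)\leq 0$, but this sign is harmless and does not affect the argument.
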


\begin{proof}

This follows from the direct method of Calculus of Variations. Let $(m_n,\omega_n,W_n)$ be a minimizing sequence such that, for all $n \in \mathbb{N}$, $ J_{RP}(m_n,\omega_n,W_n) \leq \inf J_{RP}+1$. Using the Estimate \ref{Estimate2} in Lemma \ref{AprioriEstimates} we can use Arzela-Ascoli theorem on the one hand and Banach-Alaoglu theorem on the other hand to extract a subsequence (still denoted $(m_n,\omega_n,W_n)$) converging to $(\tilde{m}, \tilde{\omega}, \tilde{W}) \in \mathcal{C}^0([0,T],\mathcal{P}_{r_2^*}(\R^d)) \times \mathcal{M}([0,T]\times \R^d,\R^d) \times  \mathcal{M}([0,T]\times \R^d,\mathbb{S}_d(\R))$ in $\mathcal{C}^0([0,T],\mathcal{P}_{\delta}(\R^d)) \times \mathcal{M}([0,T]\times \R^d,\R^d) \times  \mathcal{M}([0,T]\times \R^d,\mathbb{S}_d(\R))$ for any $\delta \in (1,r_2^*)$. It remains to show that $(\tilde{m}, \tilde{\omega}, \tilde{W})$ belongs to $\mathbb{K}$ and is indeed a minimum. The Fokker-Planck equation and the initial and final conditions are easily deduced from the weak-$*$ convergence of measures. To conclude we can use Theorem 2.34 of \cite{Ambrosio2000} to show that absolute continuity of $\omega_n$ and $W_n$ with respect to $m_n(t) \otimes dt$ is preserved when we take limits and that $\displaystyle J_{RP}(\tilde{m},\tilde{\omega},\tilde{W}) \leq \liminf_{n} J_{RP}(m_n,\omega_n,W_n)$. So $(\tilde{m},\tilde{\omega},\tilde{W})$ is indeed a minimum of $J_{RP}$ in $\mathbb{K}$.
\end{proof}

\subsection{Necessary conditions for the linear case}

\label{sec: Necessary Conditions}

In this section we suppose that $\Psi$ is linear: there is a function $h : \R^d \rightarrow \R$ such that, for all $m \in \cP_1(\R^d)$, $\displaystyle \Psi(m) = \int_{\R^d} h(x)m(dx) $. We also suppose that $h$ belongs to $\cC_b^{3+\alpha}(\R^d)$ for some $\alpha \in (0,1)$ and that there exists $x_T \in \R^d$ such that $h(x_T) < 0$. Under these assumptions, $\Psi$ satisfies Assumption \ref{FCg}. We also suppose that $f_2$ and $g$ are linear in $m$ with $\displaystyle f_2(t,m) = \int_{\R^d}f'_2(t,x)dm(x)$ 	and $\displaystyle g(m) = \int_{\R^d} g'(x)dm(x)$ with $g' \in \mathcal{C}^{3+\alpha}_b(\R^d)$ and $f_2'$ satisfying the assumptions of Theorem \ref{MainHJB}. Let us introduce a dual problem for \ref{RPg}.

\begin{definition}[Dual Problem]
\label{RP*}

\noindent The dual problem is :
\begin{equation}
\sup_{(\lambda,\phi) \in \R^+ \times \mathbb{A}, \phi \in \mbox{HJ}^-(\lambda h +g) } \int_{\R^d} \phi(0,x) m_0(dx) 
\tag{DP}
\label{DP}
\end{equation}

\end{definition}

\noindent where $ \mathbb{A} = \cC^{1,2}_b([0,T]\times \R^d) $ and, for all $(\lambda, \phi) \in \R^+ \times \mathbb{A} $, $\phi$ belongs to $\mbox{HJ}^-(\lambda h +g')$ if and only if :

\begin{equation}
\left \{
\begin{array}{ll}
-\partial_t \phi(t,x) + H(t,x,D\phi(t,x),D^2 \phi(t,x)) \leq f'_2(t,x) \mbox{ in     } [0,T]\times \R^d  \\
\phi(T,x) \leq \lambda h(x) + g'(x) \mbox{ in   } \R^d
\end{array}
\right.
\end{equation}

The main theorem of this part is a duality result between \ref{RPg} and \ref{DP} :

\begin{theorem}
$$\min_{(m,\omega,W) \in \mathbb{K}} J_{RP}(m,\omega,W) = \sup_{(\lambda,\phi) \in \R^+ \times \mathbb{A}, \phi \in \mbox{HJ}^-(\lambda h +g) } \int_{\R^d} \phi(0,x) m_0(dx). $$
\label{Duality}
\end{theorem}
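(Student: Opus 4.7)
The plan is to establish the two inequalities $\sup DP \leq \min J_{RP}$ and $\sup DP \geq \min J_{RP}$ separately. The first follows from a direct computation using the weak Fokker--Planck formulation, the second from the Fenchel--Rockafellar theorem applied to a well-chosen convex pair.

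For weak duality, fix $(m,\omega,W)\in\mathbb{K}$ with $J_{RP}(m,\omega,W)<+\infty$ and $(\lambda,\phi)\in\R^+\times\mathbb{A}$ feasible for \ref{DP}. Testing the weak Fokker--Planck formulation (recalled just after Equation \ref{FPEncoreUne}) against $\phi$ and combining (i) the Fenchel inequality $L(t,x,q,N)\geq -p\cdot q - M\cdot N - H(t,x,p,M)$ applied at $p=D\phi$, $M=D^2\phi$, with (ii) the subsolution inequality $-\partial_t\phi + H(t,x,D\phi,D^2\phi)\leq f_2'(t,x)$, yields
$$\int\phi(T,\cdot)\,dm(T)-\int\phi(0,\cdot)\,dm_0 \geq -\int_0^T\!\!\int L\,dm\,dt - \int_0^T f_2(t,m(t))\,dt.$$
The terminal inequality $\phi(T,\cdot)\leq \lambda h+g'$ together with $\lambda\geq 0$ and $\Psi(m(T))\leq 0$ bound $\int\phi(T,\cdot)\,dm(T)\leq g(m(T))$, and one concludes $\int\phi(0,\cdot)\,dm_0\leq J_{RP}(m,\omega,W)$.

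For the reverse inequality, apply Fenchel--Rockafellar on $E:=\R\times\mathcal{C}_b^{1,2}([0,T]\times\R^d)$ with the affine linear map
$$\Lambda(\lambda,\phi):=\bigl(\lambda,\ \phi(T,\cdot)-\lambda h-g',\ -\partial_t\phi,\ D\phi,\ D^2\phi\bigr)$$
into $E_1:=\R\times\mathcal{C}_b(\R^d)\times\mathcal{C}_b([0,T]\times\R^d)\times\mathcal{C}_b([0,T]\times\R^d,\R^d)\times\mathcal{C}_b([0,T]\times\R^d,\mathbb{S}_d(\R))$, with linear functional $F(\lambda,\phi):=-\int\phi(0,\cdot)\,dm_0$ and $G:E_1\to\R\cup\{+\infty\}$ the indicator of the convex set $\{\mu\geq 0,\ \psi\leq 0,\ a+H(\cdot,b,c)\leq f_2'\}$ (convex by convexity of $H$ in $(p,M)$). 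A qualification point is $\phi_0(t,x):=Mt+c$ with $M$ large enough that $-M+H(\cdot,0,0)\leq -M+C_H<f_2'$ strictly (using Assumption \ref{GrA}) and $c$ sufficiently negative that $\phi_0(T,\cdot)<\lambda_0 h+g'$ strictly for some fixed $\lambda_0>0$; at this point $G$ is finite and continuous. Writing the dual variable $\xi=(\gamma,\rho,m,\omega,W)\in E_1^*$ and substituting $(\tilde m,\tilde\omega,\tilde W,\tilde\rho):=-(m,\omega,W,\rho)$ to recover non-negative measures, a direct computation shows $F^*(\Lambda^*\xi)+G^*(-\xi)$ is finite precisely when $(\tilde m,\tilde\omega,\tilde W)\in\mathbb{K}$ solves the Fokker--Planck equation with $\tilde m(0)=m_0$, $\tilde m(T)=\tilde\rho$, and $\Psi(\tilde m(T))\leq 0$ (the last coming from $\gamma\geq 0$ combined with $\gamma=-\int h\,d\tilde\rho$); in that case $-F^*(\Lambda^*\xi)-G^*(-\xi)=-J_{RP}(\tilde m,\tilde\omega,\tilde W)$, and Fenchel--Rockafellar yields $-\sup DP=\inf(F+G\circ\Lambda)=-\min J_{RP}$.

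The main obstacle is technical: the topological dual of $\mathcal{C}_b(\R^d)$ on the non-compact space $\R^d$ strictly contains the space of Radon measures, so the Fenchel--Rockafellar theorem a priori delivers only bounded finitely additive functionals, and the weak Fokker--Planck identity extracted from $F^*(\Lambda^*\xi)<+\infty$ initially makes sense only against compactly supported test functions. To identify the multipliers with a genuine minimizer of $J_{RP}$ in $\mathbb{K}$ and upgrade the equation to hold against all of $\mathcal{C}_b^{1,2}$, I would exploit the a priori moment and total-variation bounds of Lemma \ref{AprioriEstimates} together with the existence result of Theorem \ref{Existence for RP}, either by truncation-and-pass-to-the-limit on balls of $\R^d$ or by recasting the argument in weighted spaces of continuous functions taming the growth at infinity.
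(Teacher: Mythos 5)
Your proof takes a genuinely different route from the paper. You split the argument into weak duality (direct testing of the Fokker--Planck equation against a dual-feasible $\phi$) plus strong duality via the Fenchel--Rockafellar theorem. The paper instead writes the relaxed problem directly as an $\inf$-$\sup$ of a Lagrangian $\mathcal{L}$, with the $\inf$ taken over tuples of nonnegative \emph{measures} $(m,\omega,W,n)$ and the $\sup$ over test functions $(\lambda,\phi)$ in an enlarged class $\mathbb{A}'$ allowing linear growth, and then invokes the Von Neumann minimax Theorem~\ref{VN} using the compactness of the sublevel sets supplied by Lemma~\ref{AprioriEstimates}. Both Fenchel--Rockafellar and Von Neumann are classical tools for this kind of duality; the paper explicitly notes this in the introduction when citing \cite{Cardaliaguet2015}, \cite{Briani2018} (Fenchel--Rockafellar) versus \cite{Orrieri2019} (Von Neumann).

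Your weak-duality computation is correct and essentially matches the computation that reappears inside the paper's proofs of Lemma~\ref{lemmaHJB} and Corollary~\ref{OptimalityConditionsLinearConstraint}. The setup of the Fenchel--Rockafellar pair $(F,G,\Lambda)$ and the qualification point $\phi_0(t,x)=Mt+c$ are also plausible; the interior-point conditions you list do hold under Assumption~\ref{GrA} and the boundedness of $h,g'$.

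The genuine gap is the one you flag yourself, and it is not merely a loose end: on the non-compact space $\R^d$, the topological dual of $\mathcal{C}_b$ is the space $ba$ of bounded finitely additive set functions, and the identity $\Lambda^*\xi = F$ produced by Fenchel--Rockafellar only says that the Fokker--Planck equation holds \emph{as a functional on $\mathcal{C}_b^{1,2}$}, not that $(m,\omega,W)$ are $\sigma$-additive Radon measures with a time-continuous marginal in $\cC^0([0,T],\cP_1(\R^d))$. The substitution step where you declare $(\tilde m,\tilde\omega,\tilde W)\in\mathbb{K}$ is therefore unsupported as written. The finiteness of $G^*(-\xi)$ gives nonnegativity and coercivity of $\int L$ against the $ba$ object, but ruling out mass escaping to infinity (i.e.\ showing there is no purely finitely additive part) requires an argument you have not supplied, and the two fixes you sketch (truncation to balls, or weighted spaces with controlled growth) are themselves nontrivial projects: truncation changes the admissible set and requires passing both primal and dual problems to the limit, and the natural weighted space $\{u: u/(1+|x|)\in\mathcal{C}_0\}$ does have Radon dual but then you must re-verify the qualification point and the continuity of $G$ in that norm. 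This is precisely the difficulty the paper sidesteps by choosing the Von Neumann formulation: the measure tuple $(m,\omega,W,n)$ is the \emph{primal variable itself}, living a priori in $\mathcal{M}^+\times\mathcal{M}\times\mathcal{M}\times\mathcal{M}^+$ (genuine Radon measures), and the only compactness needed is weak-$*$ compactness of the sublevel set $\mathbb{B}^*$, which Lemma~\ref{AprioriEstimates} supplies directly via moments and total-variation bounds. Until you close the $ba$-to-Radon gap (or recast the argument in a space whose dual is Radon), the strong-duality half of your proof is incomplete.
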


To prove Theorem \ref{Duality} the idea is to write the relaxed problem \ref{RPg} as a min/max problem and use the Von Neumann theorem to conclude. The statement of the Von-Neumann theorem is given in Appendix \ref{sec: Von-Neumann Theorem and Proof of the dual representation for $H_1^*$ and $H_2^*$}.

\begin{proof}[Proof of Theorem \ref{Duality}]

\textbf{Step 1: Further Relaxation}

First we need to enlarge the space of test functions $\mathbb{A}$ to allow for functions with linear growth. More precisely, we define $\mathbb{A}'$ as the subset of  $\mathcal{C}^{1,2} ([0,T] \times \R^d)$ consisting of functions $\phi$ such that

$$ \|(\partial_t \phi)^- \|_{\infty} + \| \phi^+ \|_{\infty} + \|D\phi\|_{\infty} + \|D^2 \phi \|_{\infty} + \left \| \frac{|\phi| + |\partial_t \phi |}{1+ |x|} \right \|_{\infty} < +\infty. $$

Owing to the estimates of Lemma \ref{AprioriEstimates} and using an approximation argument similar to \cite{Trevisan2016} Remark 2.3 we see that any minimizer of the relaxed problem \ref{RPg} satisfies the Fokker-Planck equation against any function $\phi \in \mathbb{A}'$. Now we define $\mathbb{B}$ to be the set of tuples $(m,\omega,W,n)$ in $\mathcal{M}^+([0,T]\times \R^d) \times \mathcal{M}([0,T]\times \R^d,\R^d) \times  \mathcal{M}([0,T]\times \R^d,\mathbb{S}_d(\R)) \times \mathcal{M}^+(\R^d) $ such that $\omega$ and $W$ are absolutely continuous with respect to $m$. $J_{RP}'$ is defined on $\mathbb{B}$ by

$$J_{RP}'(m,\omega,W,n) = \int_0^T \int_{\R^d} \left[ L \left(t, x,\frac{d\omega}{dm}(t,x), \frac{dW}{dm}(t,x) \right) + f'_2(t,x) \right] dm(t,x) + \int_{\R^d} g'(x) dn(x).  $$
If $(\tilde{m},\tilde{\omega},\tilde{W})$ is a solution of the relaxed problem we claim that

$$ J_{RP}(\tilde{m},\tilde{\omega},\tilde{W}) = J_{RP}'(\tilde{m},\tilde{\omega},\tilde{W},\tilde{m}(T,dx)) = \inf_{(m,\omega,W,n)} J_{RP}'(m,\omega,W,n), $$
where the infimum is taken over the $(m,\omega,W,n)$ in $\mathbb{B}$ satisfying, 

\begin{equation}
\displaystyle \forall \phi \in \mathbb{A}' \mbox{,     } \int_0^T \int_{\R^d} (\partial_t \phi m +  D\phi.\omega +  D^2\phi . W)  + \int_{\R^d} \phi(0,x)dm_0(x) - \int_{\R^d} \phi(T,x)dn(x) = 0,
\label{FP3} 
\end{equation}

\begin{equation}
\displaystyle \int_{\R^d} h(x) dn(x) \leq 0.
\label{ConstraintOnn}
\end{equation}

 Indeed, since $(\tilde{m},\tilde{\omega},\tilde{W},\tilde{m}(T)) $ belongs to $\mathbb{B}$ and satisfies the Fokker-Planck equation, it is clear that $ J_{RP}'(\tilde{m},\tilde{\omega},\tilde{W},\tilde{m}(T)) \geq \inf_{(m,\omega,W,n)} J_{RP}'(m,\omega,W,n)$. Now let us take $(m,\omega,W,n) \in \mathbb{B}$ satisfying \eqref{FP3} for every $\phi \in \mathbb{A}'$ and such that $J_{RP}'(m',\omega',W',n') < +\infty$. Testing \eqref{FP3} against space-independent functions we see that the time marginal of $m$ is the Lebesgue measure on $[0,T]$ and that $\displaystyle \int_{\R^d} dm(t)(x) =1$ $dt$-almost everywhere in $[0,T]$ for any flow of measures $t \rightarrow m(t)$ arising from the disintegration of $m$ with respect to $dt$.  Now we can follow Lemma \ref{Estimate1} and the discussion below in the proof of Theorem \ref{Existence for RP} to deduce that $m$ admits a continuous representative $m' \in \mathcal{C}([0,T], \mathcal{P}_{r_2^*}(\R^d))$. We then get  $n = m'(T) $ from \eqref{FP3}. Therefore $(m',\omega,W)$ belongs to $\mathbb{K} $, $$J_{RP}'(m,\omega,W,n) = J_{RP}(m',\omega,W) \geq J_{RP}(\tilde{m},\tilde{\omega},\tilde{W})$$ and the claim is proved. Now, observe that, for any point $(m,\omega,W,n)$ in $\mathbb{B}$ 

\begin{align*}
 \sup_{ (\lambda, \phi) \in \R^+ \times \mathbb{A'}} &\left[ \int_0^T \int_{\R^d} (\partial_t \phi m  + D\phi.\omega + D^2\phi . W)  + \int_{\R^d} \phi(0,x)dm_0(x) + \int_{\R^d} \left( \lambda h(x) - \phi(T,x) \right) dn(x) \right] \\
&= \left\{ \begin{array}{ll} 0 &\mbox{   if   }(m,\omega,W,n) \mbox{  satisfies \ref{FP3} and \ref{ConstraintOnn},   } \\ +\infty &\mbox{  otherwise. } \end{array} \right.
\end{align*}

\noindent Therefore we deduce that

\begin{align*}
V_{RP}(m_0) := \min_{(m,\omega,W) \in \mathbb{K} } J_{RP}(m,\omega,W) &= \inf_{(m,\omega,W,n) \in \mathbb{B} } \sup_{(\lambda , \phi) \in \R^+ \times \mathbb{A'}}  \mathcal{L}( (\lambda , \phi ), (m,\omega,W,n) ), \\
\end{align*}

\noindent where $\mathcal{L} : \R^+ \times \mathbb{A'}\times \mathbb{B} \rightarrow \R $ is defined by:

\begin{align*}
\displaystyle \mathcal{L}((\lambda,\phi),(m,\omega,W,n)) = &\int_0^T \int_{\R^d} (L \left(t, x,\frac{d\omega}{dm}(t,x), \frac{dW}{dm}(t,x) \right) + f'_2(t,x) ) dm(t,x) + \int_{\R^d} g'(x) dn(x) \\
\displaystyle &+ \int_0^T \int_{\R^d} \partial_t \phi(t,x) dm(t,x) + D\phi(t,x).d\omega(t,x) + D^2\phi(t,x) . dW(t,x) \\
\displaystyle &+ \int_{\R^d} \phi(0,x)dm_0(x) - \int_{\R^d} \phi(T,x)dn(x)  + \lambda \int_{\R^d} h(x) dn(x). 
\end{align*}

\noindent \textbf{Step 2: Analysis of the Lagrangian}  

We immediately check that for all $(\lambda, \phi) \in \R^+ \times \mathbb{A}'$, $(m,\omega,W,n) \rightarrow \mathcal{L}((\lambda , \phi ), (m,\omega,W,n) )$ is convex and for all $(m,\omega,W,n) \in \mathbb{B}$, $(\lambda,\phi) \rightarrow \mathcal{L}((\lambda , \phi ), (m,\omega,W,n) )$ is concave. Now $\mathcal{L}$ can be rewritten as the sum of four terms, $\displaystyle \mathcal{L} = \mathcal{L}_1 +\mathcal{L}_2 +\mathcal{L}_3 + \int_{\R^d} \phi(0,x)m_0(dx)$ where, 

\begin{align*}
\displaystyle &\mathcal{L}_1 ((\lambda,\phi),m) := \int_0^T \int_{\R^d} \left[ \partial_t \phi(t,x) - H(t,x,D\phi(t,x),D^2\phi(t,x)) +f'_2(t,x)\right] dm(t,x), \\
\displaystyle & \mathcal{L}_2((\lambda,\phi),(m,\omega,W )) := \int_0^T \int_{\R^d} f^{(\lambda,\phi)} \left(t,x,\frac{d\omega}{dm}(t,x),\frac{dW}{dm}(t,x) \right) dm(t,x), \\
\displaystyle & \mathcal{L}_3((\lambda,\phi),n)  := \int_{\R^d} \left[ \lambda h(x) + g'(x) - \phi(T,x) \right] dn(x),
\end{align*}

\noindent with $f^{(\lambda,\phi)}:[0,T] \times \R^d \times \R^d \times \mathbb{S}_d(\R) \rightarrow \R$ defined by, 
$$f^{(\lambda,\phi)} (t,x,p,N) =  L\left( t, x, q ,N \right) + H(t,x,D\phi(t,x),D^2\phi(t,x)) + p.D\phi(t,x) + N.D^2 \phi(t,x).$$

\noindent Now suppose that $(m_k,\omega_k, W_k,n_k)_{k \in \mathbb{N}}$ weakly-$*$ converges to some point $(m,\omega,W,n)$ and satisfies the uniform estimate 

\begin{align}
\notag \max \lbrace & \int_0^T\int_{\R^d} (1+ |x|) dm_k(t,x) ,\int_{\R^d} (1+ |x|)dn_k(x),\int_0^T \int_{\R^d} \left | \frac{d\omega_k}{dm_k}(t,x) \right |^{r_2^*} dm_k(t,x), \\
&\int_0^T \int_{\R^d} \chi_{[0,\Lambda^+]} \left( \frac{dW_k}{dm_k}(t,x) \right) dm_k(t,x) \rbrace \leq M
\label{EstimateEncoreEtToujours}
\end{align}
for some $M>0$ and for all $k \in \mathbb{N}$. Then, owing to the fact that the integrands in $\mathcal{L}_1$ and $\mathcal{L}_3$ are bounded from below, we have that, for every $(\lambda, \phi) \in \R^d \times \mathbb{A}'$, $\displaystyle \mathcal{L}_1((\lambda,\phi),m) \leq \liminf_{k\rightarrow + \infty} \mathcal{L}_1((\lambda,\phi), m_k)$ and $\displaystyle \mathcal{L}_3((\lambda,\phi) , n) \leq \liminf_{k\rightarrow + \infty}\mathcal{L}_3((\lambda,\phi) , n_k)$. Moreover, $f^{(\lambda,\phi)}$ is nonnegative, lower-semicontinuous and for all $(t,x) \in [0,T] \times \R^d$, $(q,N) \rightarrow f^{(\lambda,\phi)}(t,x,q,N) $ is convex so we can proceed as in \cite{Ambrosio2000} Theorem 2.34 and Example 2.36 to prove that $\omega,W$ are absolutely continuous with respect to $m$ and $\displaystyle \mathcal{L}_2( ( \lambda,\phi),( m,\omega,W) ) \leq \liminf_{k \rightarrow + \infty} \mathcal{L}_2( ( \lambda,\phi), (m_k,\omega_k,W_k) )$. Finally, we have that

\begin{equation}
 \mathcal{L}( (\lambda, \phi), (m, \omega,W,n)) \leq \limsup_{k\rightarrow + \infty}  \mathcal{L}( (\lambda, \phi), (m_k, \omega_k,W_k,n_k)).
 \label{w-convergence}
\end{equation}

\noindent \textbf{Step 3: Min/Max argument}

 Now we are going to use the Von Neumann Theorem \ref{VN}  to show that 

$$ \inf_{(m,\omega,W,n) \in \mathbb{B}} \sup_{(\lambda,\phi) \in \R^+ \times \mathbb{A}'} \mathcal{L}((\phi,\lambda),(m,\omega,W,n)) = \sup_{(\lambda,\phi) \in \R^+ \times \mathbb{A}'} \inf_{(m,\omega,W,n) \in \mathbb{B}} \mathcal{L}((\lambda,\phi),(m,\omega,W,n)). $$
To check that the hypothesis of the theorem are satisfied, we define $\varphi^*(t,x) :=  \sqrt{1+ |x|^2 }  (t-T-1) $ and $\phi^*(t,x):= \left(\sqrt{1+ |x|^2 } +C_1 \right ) (t-T-1) + C_2$ where 
$$C_1 = \|H \left(., .,D\varphi^*(.,.),D^2 \varphi^* (.,.) \right) -f'_2(.,.)\|_{\infty} +1 $$ 
and $C_2 = - ||g'||_{\infty} - C_1 -1$. Then we let 
$$C^* := \sup_{(\lambda,\phi) \in \R^+ \times \mathbb{A}'} \inf_{(m,\omega,W,n) \in \mathbb{B} } \mathcal{L}(\lambda,\phi),(m,\omega,W,n)) +1 $$ 
and we check that 
$$ \mathbb{B}^* := \left \{ (m,n,\omega,W) \in \mathbb{B} \mbox{   such that   } \mathcal{L}((0,\phi^*),(m,\omega,W,n) ) \leq C^* \right \} $$
is not empty and that there exists some $M>0$ such that any $(m,\omega,W,n) \in \mathbb{B}^*$ satisfies Estimate \ref{EstimateEncoreEtToujours}. We deduce that $\mathbb{B}^*$ is (strongly) bounded and using \ref{w-convergence} we see that $\mathbb{B}^*$ is weakly-$*$ compact. Now we can use \ref{EstimateEncoreEtToujours} and \ref{w-convergence} once again to show that for all $C>0$ and all $(\lambda,\phi) \in \R^+ \times \mathbb{A}'$, 
\begin{equation*}
 \mathbb{B}^* \cap \left \{ (m,n,\omega,W) \in \mathbb{B} \mbox{   such that   } \mathcal{L}((\lambda,\phi),(m,\omega,W,n) ) \leq C \right \} 
 \end{equation*}
is (possibly empty and) compact. Therefore we can apply the Von Neumann theorem, Theorem \ref{VN} to show that  
$$ \inf_{(m,\omega,W,n) \in \mathbb{B}} \sup_{(\lambda,\phi) \in \R^+ \times \mathbb{A}'} \mathcal{L}((\phi,\lambda),(m,\omega,W,n)) = \sup_{(\lambda,\phi) \in \R^+ \times \mathbb{A}'} \inf_{(m,\omega,W,n) \in \mathbb{B}} \mathcal{L}((\lambda,\phi),(m,\omega,W,n)). $$

\noindent \textbf{Step 4: Computation of the dual problem}

 Let $(\lambda, \phi) \in \R^+ \times \mathbb{A}'$ be fixed and consider the problem 
$$ \inf_{(m,\omega,W,n) \in \mathbb{B}} \mathcal{L}((\lambda,\phi),(m,\omega,W,n)). $$
Recall the definitions of $\mathcal{L}_1$, $\mathcal{L}_2$ and $\mathcal{L}_3$ in Step $2$ of the proof and observe first that, for fixed $(m,n)$,
$$ \inf_{(\omega,W)} \mathcal{L}_2 ((\lambda,\phi), (m, \omega,W)) = 0 $$
with the infimum being achieved if and only if,
\begin{equation*}
\left \{
\begin{array}{ll}
\omega = -\partial_p H(t,x,D\phi(t,x),D^2\phi(t,x)) m, \\
 W = -\partial_M H(t,x,D\phi(t,x),D^2\phi(t,x))  m. 
 \end{array}
 \right.
 \end{equation*}
Therefore it holds that
 
\begin{align*}
\inf_{(m,\omega,W,n) \in \mathbb{B}} \mathcal{L}((\lambda,\phi),(m,\omega,W,n)) &= \inf_{m \in \mathcal{M}^+([0,T] \times \R^d)} \mathcal{L}_1 ((\lambda,\phi),m) + \inf_{n \in \mathcal{M}^+(\R^d)} \mathcal{L}_3 ((\lambda,\phi),n) \\
&+ \int_{\R^d} \phi(0,x)dm_0(x) 
\end{align*}
but we have
 
$$ \inf_{m \in \mathcal{M}^+([0,T] \times \R^d)} \mathcal{L}_1 ((\lambda,\phi),m) = 
\left\{ \begin{array}{ll} 0 & \mbox{   if } -\partial_t \phi + H(t,x,D\phi,D^2 \phi)  \leq f'_2(t,x)  \mbox{  in } [0,T] \times \R^d, \\ -\infty & \mbox{  otherwise} \end{array} \right.
$$
and 
$$\inf_{n \in \mathcal{M}^+(\R^d)} \mathcal{L}_3 ((\lambda,\phi),n)  = 
\left\{ \begin{array}{ll} 0 & \mbox{   if } \phi(T,x) \leq \lambda h(x) + g'(x)  \mbox{  in } \times \R^d, \\ -\infty & \mbox{  otherwise.} \end{array} \right. $$
so we can conclude that

\begin{align*}
\inf_{(m,\omega,W,n) \in \mathbb{B}} \sup_{ (\lambda, \phi) \in \R^+ \times \mathbb{A}' } \mathcal{L}( (m,\omega,W,n),(\lambda, \phi)) &= \sup_{ (\lambda, \phi) \in \R^+ \times \mathbb{A'} } \inf_{(m,\omega,W,n) \in \mathbb{B}}  \mathcal{L}( (m,\omega,W,n),(\lambda, \phi)) \\
&= \sup_{ (\lambda, \phi) \in \R^+ \times \mathbb{A}', \phi \in \mbox{HJ}^-(\lambda h + g')} \int_{\R^d} \phi(0,x)m_0(dx),
\end{align*}

\noindent where $\phi \in \mbox{HJ}^-(\lambda h +g')$ for some $(\lambda, \phi) \in \R^+ \times  \mathbb{A}'$ if and only if

\begin{equation*}
\left \{
\begin{array}{ll}
-\partial_t \phi(t,x) + H(t,x,D\phi(t,x),D^2 \phi(t,x)) \leq f'_2(t,x) \mbox{ in     } [0,T]\times \R^d,  \\
\phi(T,x) \leq \lambda h(x) + g'(x) \mbox{ in   } \R^d.
\end{array}
\right.
\end{equation*}

\noindent Finally, we get $\displaystyle \min_{(m,\omega,W) \in \mathbb{K}} J_{RP}(m,\omega,W) = \sup_{(\lambda,\phi) \in \R^+ \times \mathbb{A}', \phi \in \mbox{HJ}^-(\lambda h +g') } \int_{\R^d} \phi(0,x) m_0(dx)$.
\end{proof}

Notice that this duality is not surprising and holds under very general conditions (see for instance \cite{Fleming1989}). In particular the volatility $\sigma$ can be degenerate. However the existence of solutions to the dual problem requires stronger assumptions. In particular we need strong solutions to the HJB equation and that is why we need Theorem \ref{MainHJB}.

\begin{lemma}
The dual problem has a finite value which is achieved at some point $(\tilde{\lambda},\tilde{\phi}) \in \R^+ \times \mathcal{C}_b^{1,2}([0,T] \times \R^d)$ such that :

\begin{equation*}
\left \{
\begin{array}{ll}
\displaystyle -\partial_t \tilde{\phi} + H(t,x,D\tilde{\phi}(t,x),D^2 \tilde{\phi}(t,x)) = f'_2(t,x) \mbox{ in     } [0,T]\times \R^d  \\
\displaystyle \tilde{\phi}(T,x) = \tilde{\lambda} h(x) + g'(x) \mbox{ in   } \R^d.
\end{array}
\right.
\end{equation*}

\label{lemmaHJB}
\end{lemma}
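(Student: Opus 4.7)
The plan is to combine the no-duality-gap result of Theorem \ref{Duality} with the well-posedness for the HJB equation furnished by Theorem \ref{MainHJB} to extract a dual optimizer. Finiteness of the value is immediate: Theorem \ref{Duality} identifies the supremum with $\min_{\mathbb{K}} J_{RP}$, which is finite by Theorem \ref{Existence for RP} and Lemma \ref{Reachability}; and it is not $-\infty$ since $(0, \phi_0) \in \R^+ \times \mathbb{A}$ is admissible, where $\phi_0$ is the solution of the HJB equation with terminal datum $g'$ given by Theorem \ref{MainHJB}. For a maximizing sequence $(\lambda_n, \phi_n)$, Theorem \ref{MainHJB} provides the unique strong solution $\psi_n \in \cC_b^{\frac{3+\alpha}{2}, 3+\alpha}([0,T]\times \R^d)$ of the HJB equation with terminal condition $\lambda_n h + g'$, and the comparison principle gives $\phi_n \leq \psi_n$ pointwise; hence $(\lambda_n, \psi_n)$ is still maximizing and saturates both the PDE and the terminal inequality.

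The quantitative heart of the argument is a uniform bound on $(\lambda_n)_n$. Using Lemma \ref{Reachability} together with Assumption \ref{TransCondU} (which in the linear case reduces to $h(x_T) < 0$), I would select $(\bar{m}, \bar{\omega}, \bar{W}) \in \mathbb{K}$ with $J_{RP}(\bar{m}, \bar{\omega}, \bar{W}) < +\infty$ and $\int_{\R^d} h\, d\bar{m}(T) = -\epsilon < 0$. Testing $\psi_n$ against the Fokker-Planck equation satisfied by $(\bar{m}, \bar{\omega}, \bar{W})$, substituting the equality $\partial_t \psi_n = H(\cdot, D\psi_n, D^2\psi_n) - f_2'$, and invoking the Fenchel inequality
$$L\bigl(t, x, \bar{q}, \bar{N}\bigr) + H\bigl(t, x, D\psi_n, D^2\psi_n\bigr) \geq -D\psi_n \cdot \bar{q} - D^2\psi_n \cdot \bar{N},$$
with $\bar{q} = d\bar{\omega}/(dt \otimes d\bar{m})$ and $\bar{N} = d\bar{W}/(dt \otimes d\bar{m})$, I would arrive at
$$\int_{\R^d} \psi_n(0, x)\, dm_0(x) \leq J_{RP}(\bar{m}, \bar{\omega}, \bar{W}) - \lambda_n \epsilon.$$
The left-hand side converges to the finite supremum, so $\lambda_n \leq C$ uniformly in $n$.

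Along a subsequence, $\lambda_n \to \tilde{\lambda} \geq 0$, and the terminal data $\lambda_n h + g'$ converge in $\cC_b^{3+\alpha}(\R^d)$ to $\tilde{\lambda} h + g'$. Applying Theorem \ref{MainHJB} once more yields a strong solution $\tilde{\phi} \in \cC_b^{\frac{3+\alpha}{2}, 3+\alpha}([0,T]\times \R^d)$ with this limiting terminal datum, and the comparison principle supplies the stability estimate $\|\psi_n - \tilde{\phi}\|_\infty \leq |\lambda_n - \tilde{\lambda}|\,\|h\|_\infty \to 0$. Consequently $\int_{\R^d}\psi_n(0, x)\, dm_0 \to \int_{\R^d} \tilde{\phi}(0, x)\, dm_0$, so $(\tilde{\lambda}, \tilde{\phi})$ attains the supremum and satisfies the HJB equation and terminal condition with equality. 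The main obstacle is precisely the bound on $\lambda_n$: without strict feasibility (equivalent to the transversality in the linear case), the Lagrange multiplier could blow up, and the Fenchel inequality combined with the chosen strictly feasible competitor is the key device for turning qualitative strict feasibility into a quantitative uniform bound.
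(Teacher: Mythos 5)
Your proposal is correct and takes essentially the same route as the paper: both arguments derive the uniform bound on $\lambda_n$ by testing the dual candidate against the Fokker--Planck equation for a strictly feasible competitor $(\bar m,\bar\omega,\bar W)$ and invoking the Fenchel inequality, then pass to the limit using Theorem~\ref{MainHJB} and the comparison principle. You are merely more explicit than the paper in replacing $\phi_n$ by the exact solution $\psi_n$ and in spelling out the stability estimate $\|\psi_n-\tilde\phi\|_\infty\le|\lambda_n-\tilde\lambda|\,\|h\|_\infty$; the paper folds these steps into a terse ``by comparison.''
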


\begin{proof}

The finiteness follows from the fact that 
$$ \sup_{(\lambda,\phi) \in \R^+ \times \mathbb{A}', \phi \in \mbox{HJ}^-(\lambda h +g) } \int_{\R^d} \phi(0,x) dm_0(x) = \min_{(m,\omega,W) \in \mathbb{K}} J_{RP}(m,\omega,W) < +\infty.$$
Now take $(\bar{m},\bar{\omega},\bar{W}) \in \mathbb{K}$ such that $J_{RP}(\bar{m},\bar{\omega},\bar{W}) <+\infty$ and $\displaystyle \int_{\R^d} h(x)d \bar{m}(T)(x) < 0$ and $(\lambda, \phi)$ a candidate for the dual problem. Since $(\bar{m},\bar{\omega},\bar{W})$ satisfies the Fokker-Planck equation we have, taking $\phi$ as a test function

\begin{align*}
& \int_{\R^d} \phi(T,x)d\bar{m}(T)(x) = \int_{\R^d} \phi(0,x)dm_0(x) \\
&+ \int_0^T \int_{\R^d} \left[ \partial_t \phi(t,x) + \frac{ d\bar{\omega}}{d \bar{m} \otimes dt}(t,x).D\phi(t,x) + \frac{d\bar{W}}{d \bar{m} \otimes dt}(t,x).D^2\phi(t,x) \right]d\bar{m}(t)(x)dt. 
\end{align*}
Using the inequations satisfied by $\phi$ and the definition of $L$ we get after reorganizing the terms

\begin{equation}
\lambda \left( - \int_{\R^d} h(x)d\bar{m}(T)(x) \right) \leq J_{RP}(\bar{m}, \bar{\omega}, \bar{W}) - \int_{\R^d} \phi(0,x) dm_0(x).
\label{Estimatelambda2020}
\end{equation}
Now if we take $(\phi_n, \lambda_n)$ a maximizing sequence, the above inequality shows that $(\lambda_n)$ is bounded. Taking a subsequence we can suppose that $(\lambda_n)$ converges to some $\tilde{\lambda} \geq 0$. By comparison, $(\tilde{\phi},\tilde{\lambda})$ is a solution of the dual problem where $\tilde{\phi} \in \mathcal{C}_b^{1,2}([0,T] \times \R^d)$ is solution to 

$$
\left \{
\begin{array}{ll}
-\partial_t \phi(t,x) + H(t,x,D\phi(t,x),D^2 \phi(t,x)) = f_2'(t,x) \mbox{ in     } [0,T]\times \R^d  \\
\phi(T,x) = \tilde{\lambda} h(x) + g'(x) \mbox{ in   } \R^d.
\end{array}
\right.
$$  

\end{proof}

\begin{remark}
In the proof of the previous lemma, we showed as a by product that $\lambda$ is bounded independently from $\phi, m$. In particular using  inequality \ref{Estimatelambda2020} for a maximizing sequence and using the duality result of Theorem \ref{Duality} we get that $\tilde{\lambda}$ satisfies
\begin{equation*}
\displaystyle \tilde{\lambda} \leq \frac{ J_{RP}(\bar{m}, \bar{\omega}, \bar{W}) - V_{RP}(m_0) }{ \displaystyle - \int_{\R^d} h(x)d\bar{m}(T)(x) }
\end{equation*}
for any candidate $(\bar{m},\bar{\omega},\bar{W})$ such that $\displaystyle \int_{\R^d} h(x) d\bar{m}(T)(x) < 0$.
\end{remark}

\begin{corollary}
\label{OptimalityConditionsLinearConstraint}
If $(\tilde{m},\tilde{\omega},\tilde{W})$ and $(\tilde{\lambda},\tilde{\phi})$ are points where respectively the primal and the dual problems are achieved, then
$$ \tilde{\omega} = -\partial_p H(t,x,D\tilde{\phi}, D^2\tilde{\phi}(t,x)) \tilde{m}(t) \otimes dt,$$
$$\tilde{W} =-\partial_M H(t,x,D\tilde{\phi}, D^2\tilde{\phi}(t,x)) \tilde{m}(t) \otimes dt$$
and $(\tilde{\lambda}, \tilde{\phi}, \tilde{m})$ satisfies the optimality conditions

\begin{equation}
\left \{
\begin{array}{ll}
\displaystyle -\partial_t \tilde{\phi}(t,x) + H(t,x, D\tilde{\phi}(t,x),D^2\tilde{\phi}(t,x)) = f'_2(t,x) & \mbox{in } [0,T]\times \R^d\\
\displaystyle \partial_t \tilde{m} - \mdiv (\partial_p H(t,x,D \tilde{\phi}(t,x), D^2 \tilde{\phi}(t,x))\tilde{m}) \\
\displaystyle \hspace{60pt}+ \sum_{i,j} \partial_{ij}^2 ((\partial_M H(t,x,D \tilde {\phi} (t,x),D^2\tilde{\phi}(t,x)))_{ij}\tilde{m}) = 0 & \mbox{in } [0,T]\times \R^d \\
\displaystyle \tilde{\phi}(T,x) = \tilde{\lambda} h(x) + g'(x)  \mbox{   in   } \R^d \mbox{,   }
\displaystyle \tilde{m}(0) = m_0 \\
\displaystyle \tilde{\lambda} \int_{\R^d} h(x) d\tilde{m}(T)(x) = 0 \mbox{,       }
\displaystyle  \int_{\R^d} h(x) d\tilde{m}(T)(x) \leq 0 \mbox{,        }
\displaystyle \tilde{\lambda} \geq 0. \\
\end{array}
\right.
\label{OC2}
\end{equation}

\end{corollary}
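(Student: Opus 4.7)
The plan is to exploit the zero duality gap established in Theorem \ref{Duality} together with Lemma \ref{lemmaHJB}: since the value of \ref{RPg} equals the value of \ref{DP} and both are achieved at $(\tilde m,\tilde\omega,\tilde W)$ and $(\tilde\lambda,\tilde\phi)$ respectively, every inequality used to bound one by the other must be an equality. The optimality conditions will emerge as the cases of equality in (i) the Fenchel--Young inequality $L(t,x,q,N)+H(t,x,p,M)\geq -p\cdot q-M\cdot N$, (ii) the HJB inequality (which is already an equality by Lemma \ref{lemmaHJB}), and (iii) the complementary slackness produced by the terminal condition $\tilde\phi(T,x)=\tilde\lambda h(x)+g'(x)$ together with $\tilde\lambda\geq 0$ and the constraint $\int h\,d\tilde m(T)\leq 0$.

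Concretely I would first use $\tilde\phi\in\cC^{1,2}_b([0,T]\times\R^d)$ as a test function against the Fokker--Planck equation satisfied by $(\tilde m,\tilde\omega,\tilde W)$, to obtain
\[
\int_{\R^d}\tilde\phi(T,x)\,d\tilde m(T)(x)-\int_{\R^d}\tilde\phi(0,x)\,dm_0(x)=\int_0^T\!\!\int_{\R^d}\!\Bigl[\partial_t\tilde\phi+D\tilde\phi\cdot\tfrac{d\tilde\omega}{dt\otimes d\tilde m}+D^2\tilde\phi\cdot\tfrac{d\tilde W}{dt\otimes d\tilde m}\Bigr]d\tilde m(t)(x)\,dt.
\]
Inserting the HJB equation $-\partial_t\tilde\phi+H(t,x,D\tilde\phi,D^2\tilde\phi)=f_2'$ and the Fenchel--Young bound $L+H\geq -p\cdot q-M\cdot N$ applied to $(p,M)=(D\tilde\phi,D^2\tilde\phi)$ and $(q,N)=(\tfrac{d\tilde\omega}{dt\otimes d\tilde m},\tfrac{d\tilde W}{dt\otimes d\tilde m})$, then using the terminal condition to rewrite $\int\tilde\phi(T)\,d\tilde m(T)=\tilde\lambda\int h\,d\tilde m(T)+\int g'\,d\tilde m(T)$, I get
\[
\int_{\R^d}\tilde\phi(0,x)\,dm_0(x)\;\leq\;J_{RP}(\tilde m,\tilde\omega,\tilde W)+\tilde\lambda\int_{\R^d}h(x)\,d\tilde m(T)(x).
\]

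By the duality equality $\int\tilde\phi(0,x)\,dm_0(x)=J_{RP}(\tilde m,\tilde\omega,\tilde W)$ and the two sign conditions $\tilde\lambda\geq 0$ and $\int h\,d\tilde m(T)\leq 0$, every inequality above must be saturated. The saturation of Fenchel--Young $\tilde m(t)\otimes dt$-a.e.\ is precisely the statement $(q,N)\in-\partial_{(p,M)}H(t,x,D\tilde\phi,D^2\tilde\phi)$; since $H$ is $\cC^1$ in $(p,M)$ by Assumption \ref{C1HAss}, the subdifferential is a singleton and we read off
\[
\tfrac{d\tilde\omega}{dt\otimes d\tilde m}=-\partial_p H(t,x,D\tilde\phi,D^2\tilde\phi),\qquad \tfrac{d\tilde W}{dt\otimes d\tilde m}=-\partial_M H(t,x,D\tilde\phi,D^2\tilde\phi),
\]
which, multiplying by $\tilde m(t)\otimes dt$, yields the two claimed measure identities. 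Saturation of the product term $\tilde\lambda\int h\,d\tilde m(T)\leq 0$ forces the exclusion condition $\tilde\lambda\int h\,d\tilde m(T)=0$.

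Finally I would collect the ingredients of the system \ref{OC2}: the HJB equation and the terminal condition $\tilde\phi(T,x)=\tilde\lambda h(x)+g'(x)$ come from Lemma \ref{lemmaHJB}; the Fokker--Planck equation in its drift/diffusion form follows by substituting the newly identified $\tilde\omega,\tilde W$ into $\partial_t\tilde m+\mdiv\tilde\omega-\sum_{i,j}\partial^2_{ij}\tilde W_{ij}=0$; the initial condition and the inequality $\int h\,d\tilde m(T)\leq 0$ come from $\mathbb{K}$; and $\tilde\lambda\geq 0$ is built into the dual problem. The main (and only) subtle point is ensuring that the integration by parts against $\tilde\phi$ in the Fokker--Planck equation is licit: since $\tilde\phi\in\cC^{1,2}_b$ this is immediate from the definition of solution to \eqref{FPEncoreUne} recalled before Proposition \ref{Equivalence FPE/SDE}, and no further approximation argument is required.
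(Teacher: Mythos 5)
Your proposal is essentially correct and follows the same duality/complementary-slackness structure as the paper: test the Fokker--Planck equation against $\tilde\phi\in\cC^{1,2}_b$, apply the Fenchel--Young inequality $L+H\geq -p\cdot q-M\cdot N$ at $(p,M)=(D\tilde\phi,D^2\tilde\phi)$, use the sign conditions $\tilde\lambda\geq 0$ and $\int h\,d\tilde m(T)\leq 0$, and then squeeze against the zero duality gap of Theorem \ref{Duality} to saturate every step. The differentiability of $H$ in $(p,M)$ (Assumption \ref{C1HAss}) then turns the saturated Fenchel--Young identity into the two measure formulas, and the saturated product term gives the exclusion condition.

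The one place where you diverge from the paper is in invoking Lemma \ref{lemmaHJB} to assert the HJB and terminal equalities \emph{a priori}. Lemma \ref{lemmaHJB} only constructs \emph{one} maximizer $(\tilde\lambda,\tilde\phi)$ which solves the HJB with equality; it does not claim this for an arbitrary optimal pair (and for a general $m_0$ the dual maximizer need not be unique). The paper's proof is deliberately agnostic on this: it starts from only the two admissibility inequalities $-\partial_t\phi+H\leq f_2'$ and $\phi(T,\cdot)\leq\lambda h+g'$, and derives the equalities as consequences of the saturation argument, so they emerge $\tilde m(t)\otimes dt$-a.e.\ and $\tilde m(T)$-a.e.\ respectively, without ever passing through Lemma \ref{lemmaHJB}. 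Your version is a bit cleaner (two of the three inequality steps collapse to equalities from the start) but is rigorous only if you interpret $(\tilde\lambda,\tilde\phi)$ as the maximizer supplied by Lemma \ref{lemmaHJB}, which is indeed the one used in the downstream proof of Theorem \ref{MainGeneral}. So the gap is harmless in context, but when writing up it is worth either (a) explicitly committing to that specific maximizer, or (b) running the argument from the raw inequalities as the paper does, which proves the same conclusion for any optimal $(\tilde\lambda,\tilde\phi)$.

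Two small checks you handled correctly: using $\cC^{1,2}_b$ test functions against the FPE is indeed licit without approximation because $\tilde\omega,\tilde W\ll\tilde m(t)\otimes dt$; and the concluding inequality $\int\tilde\phi(0)\,dm_0\leq J_{RP}(\tilde m,\tilde\omega,\tilde W)+\tilde\lambda\int h\,d\tilde m(T)$, which is exactly the weak-duality estimate whose saturation you exploit, is the right thing to write down.
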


\begin{proof}[Proof of Corollary \ref{OptimalityConditionsLinearConstraint}]

Let $(\phi,\lambda) \in \mathbb{A}$ and $(m,\omega,W) \in \mathbb{K}$ points where the primal and the dual problems are achieved. One has $\displaystyle \int_{\R^d} \phi(0,x)dm_0(x) = J_{RP}(m,\omega,W)$. Given the constraint on $\phi$ and the fact that $m$ is non-negative we get

\begin{align*}
\int_{\R^d} \phi(0,x) dm_0(x) &- \int_0^T \int_{\R^d} (-\partial_t \phi (t,x)+ H(t,x,D\phi(t,x),D^2\phi(t,x)))dm(t)(x)dt  \\
& \geq \int_0^T \int_{\R^d} \left[ L \left(t,x,\frac{d\omega}{dt \otimes dm}(t,x),\frac{dW}{dt\otimes dm}(t,x) \right) \right] dm(t)(x)dt + \int_{\R^d} g'(x)dm(T)(x). 
\end{align*}
Yet, $(m,\omega,W)$ solves the Fokker-Planck equation and $\phi(T,x) \leq \lambda h(x) + g'(x)$ for all $x \in \R^d$ so

\begin{align*}
& \lambda \int_{\R^d} h(x)dm(T)(x) - \int_0^T \int_{\R^d} D\phi (t,x). d\omega(t,x) - \int_0^T \int_{\R^d} D^2\phi(t,x).dW(t,x) \\
& \geq \int_0^T \int_{\R^d} \left [ L \left(t,x,\frac{d\omega}{dt\otimes dm}(t,x),\frac{dW}{dt \otimes dm}(t,x) \right) +  H(t,x,D\phi(t,x),D^2\phi(t,x)) \right ] dm(t)(x)dt. 
\end{align*}

\noindent Remember that $\displaystyle \int_{\R^d} h(x)dm(T)(x) \leq 0$ and $\lambda \geq 0$ so

\begin{align*}
& \int_0^T \int_{\R^d} [ L \left(t,x,\frac{d\omega}{dt \otimes dm}(t,x),\frac{dW}{dt\otimes dm}(t,x) \right) +  H(t,x,D\phi(t,x),D^2\phi(t,x)) \\
&- D\phi(t,x) .\frac{d\omega}{dt \otimes dm}(t,x)-D^2\phi(t,x).\frac{dW}{dt \otimes dm}(t,x)) ] dm(t)(x)dt \\
& \leq \lambda \int_{\R^d} h(x)dm(T)(x) \leq 0.
\end{align*}

\noindent But, by definition of $L$, the integrand is always nonnegative. So, $m(t)\otimes dt$-ae we have

\begin{align*} 
& - D\phi (t,x).\frac{d\omega}{dt\otimes dm}(t,x)-D^2\phi(t,x).\frac{dW}{dt \otimes dm}(t,x) \\
&= L \left(t,x,\frac{d\omega}{dt \otimes dm}(t,x),\frac{dW}{dt \otimes dm}(t,x) \right) + H(t,x, D\phi(t,x),D^2\phi(t,x))
\end{align*}
and since $H$ is differentiable, $m(t)\otimes dt$-ae it holds

\begin{equation*}
\left \{
\begin{array}{ll}
\frac{d\omega}{dt \otimes dm} (t,x)= -\partial_pH(t,x,D\phi(t,x),D^2\phi(t,x)) \\
 \frac{dW}{dt \otimes dm}(t,x) = -\partial_MH(t,x,D\phi(t,x),D^2\phi(t,x)). 
\end{array}
\right.
\end{equation*}
Finally, since all the inequalities at the beginning of this proof are actually equalities, we get the necessary conditions for optimality.
\end{proof}

\section{Proof of the main results}

\label{sec: Proof of the Main Theorem}

\subsection{Linearization}

Let us fix $(\tilde{m},\tilde{\omega},\tilde{W})$ a solution of the relaxed problem. The linearized problem is to minimize 

\begin{align}
\notag J^l_{RP}(m,\omega,W) &:= \int_0^T \int_{\R^d} L \left( t,x,\frac{d\omega}{dt \otimes dm}(t,x), \frac{dW}{dt \otimes dm}(t,x) \right) dm(t)(x)dt \\
& +\int_0^T\int_{\R^d} \frac{ \delta f_2}{\delta m}(t,\tilde{m}(t),x)dm(t)(x)dt +\int_{\R^d} \frac{\delta g}{\delta m}(\tilde{m}(T),x)dm(T)(x)
\label{Linearized Problem 2020}
\end{align}
among triples $(m,\omega,W)$ that satisfy the Fokker-Planck equation with $m(0)=m_0$ and with $m$ satisfying the linearized constraint 

\begin{equation}
\int_{\R^d} \frac{\delta \Psi}{\delta m}(\tilde{m}(T),x)dm(T)(x) \leq 0.
\label{linearized constraint}
\end{equation}
Notice that we are in the setting of Section \ref{sec: Necessary Conditions} with $\displaystyle f_2'(t,x)= \frac{\delta f_2}{\delta m}(t,\tilde{m}(t),x)$, $\displaystyle g'(x)= \frac{\delta g}{\delta m}(\tilde{m}(T),x)$ and $\displaystyle h(x) = \frac{\delta \Psi}{\delta m}(\tilde{m}(T),x)$.

\begin{proposition}
Let $(\tilde{m}, \tilde{\omega}, \tilde{W})$ be a fixed solution to the relaxed problem. If $\Psi(\tilde{m}(T))=0$ then $(\tilde{m},\tilde{\omega},\tilde{W})$ is a solution of the linearized problem \ref{Linearized Problem 2020}. If $\Psi(\tilde{m}(T)) <0$ then $(\tilde{m},\tilde{\omega},\tilde{W})$ is a solution of the linearized problem \ref{Linearized Problem 2020} without the final constraint.
\label{From general to linear constraint}
\end{proposition}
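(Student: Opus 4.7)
The plan is to exploit the optimality of $(\tilde m, \tilde\omega, \tilde W)$ for \ref{RPg} by testing it against convex combinations with an arbitrary linearized competitor, then passing to a first-order limit. The key structural facts are: (i) the Fokker-Planck equation and the initial condition are \emph{affine} in $(m,\omega,W)$, so convex combinations stay in the FP-affine set; (ii) the Lagrangian term $\displaystyle (m,\omega,W)\mapsto\int_0^T\!\!\int_{\R^d} L(t,x,\tfrac{d\omega}{dt\otimes dm},\tfrac{dW}{dt\otimes dm})\,dm(t)dt$ is jointly convex in $(m,\omega,W)$ (it is the integral of the perspective of the convex function $L$); (iii) $f_2,g,\Psi$ admit first-order Taylor expansions along straight lines in $\cP_1(\R^d)$ via their functional derivatives. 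Given any competitor $(m,\omega,W)$ I will build an admissible perturbation of $(\tilde m, \tilde\omega, \tilde W)$ for \ref{RPg}, apply optimality, divide by the perturbation parameter $\epsilon$ and send $\epsilon\to 0^+$. The normalization $\int\frac{\delta f_2}{\delta m}(t,\tilde m(t),x)\,d\tilde m(t)(x)=0$ (and the analogue for $g$, $\Psi$) will collapse the leading order into exactly the linearized cost $J^l_{RP}$.

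Case~1 ($\Psi(\tilde m(T))<0$): Let $(m,\omega,W)$ be any competitor of the \emph{unconstrained} linearized problem and set $(m^\epsilon,\omega^\epsilon,W^\epsilon):=(1-\epsilon)(\tilde m,\tilde \omega,\tilde W)+\epsilon(m,\omega,W)$. By continuity of $\Psi$ on $\cP_1(\R^d)$ (Assumption~\ref{FCg}) and $\mathbf d_1(m^\epsilon(T),\tilde m(T))\to 0$, for $\epsilon$ small we still have $\Psi(m^\epsilon(T))<0$, so $(m^\epsilon,\omega^\epsilon,W^\epsilon)\in\mathbb K$ and the optimality of $(\tilde m,\tilde\omega,\tilde W)$ gives $J_{RP}(m^\epsilon,\omega^\epsilon,W^\epsilon)\geq J_{RP}(\tilde m,\tilde\omega,\tilde W)$. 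Using convexity of the Lagrangian term and the first-order expansions of $f_2$, $g$, dividing by $\epsilon$ and sending $\epsilon\to 0$ yields $J_{RP}^l(m,\omega,W)\geq J_{RP}^l(\tilde m,\tilde\omega,\tilde W)$.

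Case~2 ($\Psi(\tilde m(T))=0$): Now the same convex combination may fail $\Psi\leq 0$, because an expansion gives only $\Psi(m^\epsilon(T))=\epsilon\int\frac{\delta\Psi}{\delta m}(\tilde m(T),x)\,dm(T)(x)+o(\epsilon)\leq o(\epsilon)$, with no strict sign. To restore feasibility I add a \emph{corrector} built from the transversality condition~\ref{TransCondU}: there is $x_0\in\R^d$ with $\frac{\delta\Psi}{\delta m}(\tilde m(T),x_0)<0$, and by continuity $\frac{\delta\Psi}{\delta m}(\tilde m(T),\cdot)<-\eta_0$ on a small ball around $x_0$. Adapting the construction in Lemma~\ref{Reachability} (use the weak controllability from Lemma~\ref{GrH_1 explained} to drive the process to concentrate near $x_0$) produces $(\bar m,\bar\omega,\bar W)\in\mathbb K$ with $J_{RP}(\bar m,\bar\omega,\bar W)<+\infty$ and $\gamma:=-\int\frac{\delta\Psi}{\delta m}(\tilde m(T),x)\,d\bar m(T)(x)>0$. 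For $c>0$ fixed and $\epsilon\in(0,(1+c)^{-1})$ I consider the triple convex combination
\begin{equation*}
(m^{\epsilon,c},\omega^{\epsilon,c},W^{\epsilon,c}):=(1-\epsilon-c\epsilon)(\tilde m,\tilde\omega,\tilde W)+\epsilon(m,\omega,W)+c\epsilon(\bar m,\bar\omega,\bar W).
\end{equation*}
The expansion $\Psi(m^{\epsilon,c}(T))=\epsilon\bigl(a-c\gamma\bigr)+o_{c}(\epsilon)$, with $a=\int\frac{\delta\Psi}{\delta m}(\tilde m(T),x)\,dm(T)(x)\leq 0$ by the linearized constraint~\eqref{linearized constraint}, shows $\Psi(m^{\epsilon,c}(T))\leq 0$ for $\epsilon$ small (at fixed $c$), so the triple lies in $\mathbb K$. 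Optimality, convexity of the Lagrangian part, and the Taylor expansions of $f_2,g$ give
\begin{equation*}
0\leq J_{RP}(m^{\epsilon,c},\omega^{\epsilon,c},W^{\epsilon,c})-J_{RP}(\tilde m,\tilde\omega,\tilde W)\leq \epsilon\bigl[J_{RP}^l(m,\omega,W)-J_{RP}^l(\tilde m,\tilde\omega,\tilde W)\bigr]+c\epsilon\bigl[J_{RP}^l(\bar m,\bar\omega,\bar W)-J_{RP}^l(\tilde m,\tilde\omega,\tilde W)\bigr]+o_c(\epsilon).
\end{equation*}
Dividing by $\epsilon$, letting $\epsilon\to 0$ and then $c\to 0^+$ (which is licit because $J_{RP}^l(\bar m,\bar\omega,\bar W)$ is a finite constant) gives the desired $J_{RP}^l(\tilde m,\tilde\omega,\tilde W)\leq J_{RP}^l(m,\omega,W)$.

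The main obstacle is Case~2, specifically producing the corrector $(\bar m,\bar\omega,\bar W)$ with a \emph{strictly negative linearized} constraint value, since Assumption~\ref{FCg} only asserts the existence of some $m$ with $\Psi(m)<0$ and a pointwise transversality condition — neither directly gives what we need. Reconciling these two ingredients with the reachability analysis of Lemma~\ref{Reachability} is the essential step; everything else reduces to bookkeeping of first-order expansions and the convexity/perspective property of the Lagrangian functional.
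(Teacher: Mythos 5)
Your proposal is correct and takes essentially the same route as the paper: produce a corrector with strictly negative linearized‐constraint value via the transversality condition and the reachability construction, mix it into the perturbation of $(\tilde m,\tilde\omega,\tilde W)$ to restore feasibility, then linearize $\Psi,f_2,g$ and use convexity of the Lagrangian term. Your ``triple convex combination'' is just a reparametrization of the paper's two-step combination $(1-\lambda)(\tilde m,\tilde\omega,\tilde W)+\lambda\bigl[(1-\epsilon)(m,\omega,W)+\epsilon(m',\omega',W')\bigr]$ with the corresponding order of limits, and the lone small slip is asserting $(\bar m,\bar\omega,\bar W)\in\mathbb K$ --- what is needed and actually produced is only strict linearized feasibility and finite cost, not $\Psi(\bar m(T))\leq 0$, which doesn't affect the argument since feasibility of $(m^{\epsilon,c},\omega^{\epsilon,c},W^{\epsilon,c})$ is checked directly against $\Psi$ via the first-order expansion.
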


\begin{proof}
Suppose that $\Psi(\tilde{m}(T)) =0$. By condition \ref{TransCondU} there is some $x_0 \in \R^d$ such that $\displaystyle \frac{\delta \Psi}{\delta m}(\tilde{m}(T),x_0) <0$ and we can proceed as in Lemma \ref{Reachability} (the constraint being then the linear one: $\displaystyle \tilde{\Psi}(m) = \int_{\R^d} \frac{\delta \Psi}{\delta m}(\tilde{m}(T),x)dm(x)$) and find $(m',\omega',W')$ such that 
$$\left \{ 
\begin{array}{ll}
\displaystyle m'(0) = m_0 \\
\displaystyle \partial_t m' + \mdiv(\omega') - \sum_{i,j} \partial_{ij}^2 W'_{ij} =0 \\
\displaystyle \int_{\R^d} \frac{\delta \Psi}{\delta m}(\tilde{m}(T),x)dm'(T)(x) < 0 \\
\displaystyle J^l_{RP}(m', \omega', W') < +\infty. 
\end{array}
\right.
$$
Now let $(m,\omega,W)$ be any candidate for the linearized problem (in particular $(m,\omega,W)$ satisfies the linearized constraint \ref{linearized constraint}). Let $\epsilon \in (0,1)$ and define $(m^{\epsilon}, \omega^{\epsilon}, W^{\epsilon}):= (1-\epsilon) (m,\omega,W) + \epsilon (m',\omega',W')$ (we perturb $(m, \omega,W)$ a little bit so that it satisfies strictly the linearized constraint). Let  $\lambda \in (0,1)$ and define $(m_{\lambda}^{\epsilon}, \omega_{\lambda}^{\epsilon}, W_{\lambda}^{\epsilon}):= (1-\lambda) (\tilde{m},\tilde{\omega},\tilde{W}) + \lambda (m^{\epsilon}, \omega^{\epsilon}, W^{\epsilon}) $. We have that 

\begin{equation}
\Psi( m_{\lambda}^{\epsilon}(T) ) = \Psi( \tilde{m}(T) ) + \lambda \int_{\R^d} \frac{\delta \Psi}{\delta m}(\tilde{m}(T),x) dm^{\epsilon}(T)(x) + \circ (\lambda)
\label{DLenlambda}
\end{equation}
but

$$\int_{\R^d}\frac{\delta \Psi}{\delta m}(\tilde{m}(T),x) dm^{\epsilon}(T)(x) = (1-\epsilon) \int_{\R^d}\frac{\delta \Psi}{\delta m}(\tilde{m}(T),x) dm(T)(x) +\epsilon \int_{\R^d}\frac{\delta \Psi}{\delta m}(\tilde{m}(T),x) dm'(T)(x) <0 $$
and therefore $\Psi( m_{\lambda}^{\epsilon}(T) ) \leq 0$ for small enough $\lambda$. Now, by convexity of
 
$$ (m,\omega,W) \rightarrow \Gamma (m,\omega,W) := \int_0^T \int_{\R^d} L \left( t,x,\frac{d\omega}{dt \otimes dm}(t,x), \frac{dW}{dt \otimes dm}(t,x) \right) dm(t)(x)dt $$ 
and optimality of $(\tilde{m},\tilde{\omega},\tilde{W})$ for the linearized problem we have 

\begin{align*}
\Gamma(\tilde{m},\tilde{\omega},\tilde{W}) &\leq \Gamma (m_{\lambda}^{\epsilon}, \omega_{\lambda}^{\epsilon}, W_{\lambda}^{\epsilon}) +\int_0^T \left[ f_2(t, m_{\lambda}^{\epsilon}(t)) - f_2(t,\tilde{m}(t)) \right] dt +  g( m_{\lambda}^{\epsilon}(T)) - g(\tilde{m}(T))  \\
&\leq (1-\lambda) \Gamma(\tilde{m},\tilde{\omega},\tilde{W}) + \lambda \Gamma (m^{\epsilon}, \omega^{\epsilon}, W^{\epsilon}) +\int_0^T \left[ f_2(t, m_{\lambda}^{\epsilon}(t)) - f_2(t,\tilde{m}(t)) \right] dt \\
&+ g( m_{\lambda}^{\epsilon}(T)) - g(\tilde{m}(T))  
\end{align*}
which gives 
$$ \Gamma (\tilde{m},\tilde{\omega},\tilde{W}) \leq \Gamma (m^{\epsilon}, \omega^{\epsilon}, W^{\epsilon}) + \frac{1}{\lambda}\int_0^T \left[ f_2(t, m_{\lambda}^{\epsilon}(t)) - f_2(t,\tilde{m}(t)) \right] dt + \frac{1}{\lambda} \left[ g( m_{\lambda}^{\epsilon}(T)) - g(\tilde{m}(T)) \right]. $$
Now we let $\lambda$ go to $0$ and use once again the convexity of $\Gamma$ to get
\begin{align*}
J_{RP}^l( \tilde{m}, \tilde{\omega},\tilde{W}) &= \Gamma ( \tilde{m}, \tilde{\omega},\tilde{W})  \\
& \leq \Gamma (m^{\epsilon}, \omega^{\epsilon}, W^{\epsilon}) + \int_0^T \int_{\R^d} \frac{\delta f_2}{\delta m}(t,\tilde{m}(t),x)dm^{\epsilon}(t)(x)dt + \int_{\R^d} \frac{\delta g}{\delta m}(\tilde{m}(T),x)dm^{\epsilon}(T)(x) \\
& \leq J_{RP}^l (m,\omega,W) + \epsilon \left( J_{RP}^l (m',\omega',W') - J_{RP}^l(m,\omega,W) \right). 
\end{align*}
We get the result letting $\epsilon \rightarrow 0$. When $\Psi(\tilde{m}(T)) <0$ there is no need to perturb $(m,\omega,W)$ since \ref{DLenlambda} shows that $\Psi( m_{\lambda}^{0}(T) ) \leq 0$ for small enough $\lambda$ independently from the sign of $\displaystyle \int_{\R^d} \frac{\delta \psi}{\delta m}(\tilde{m}(T),x)dm(T)(x)$ and we can take $\epsilon =0$ in the rest of the proof.
\end{proof}

\subsection{General constraint}

\begin{proof}[Proof of Theorem \ref{MainGeneral}]

Recall that, on the one hand we want to prove the existence of optimal Markovian controls for \ref{SP} and on the other hand we want to prove that optimal controls, if Markovian, satisfy some necessary conditions. Let $(\tilde{m},\tilde{\omega},\tilde{W})$ be a solution of the relaxed problem. We can apply Proposition \ref{From general to linear constraint} and Corollary \ref{OptimalityConditionsLinearConstraint} to find some $(\tilde{\lambda}, \tilde{\phi})$ in $\R^+ \times \cC^{1,2}_b([0,T]\times \R^d)$ such that $(\tilde{m},\tilde{\lambda},\tilde{\phi})$ satisfies the system of optimality conditions \ref{OC2} with $\displaystyle f_2'(t,x)= \frac{\delta f_2}{\delta m}(t,\tilde{m}(t),x)$, $\displaystyle g'(x)= \frac{\delta g}{\delta m}(\tilde{m}(T),x)$ and $\displaystyle h(x) = \frac{\delta \Psi}{\delta m}(\tilde{m}(T),x)$. Notice that, when $\Psi(\tilde{m}(T)) < 0$ we can take $\lambda=0$ since $(\tilde{m},\tilde{\omega},\tilde{W})$ is a solution of the linearized problem without constraint in this case.  In general, let $\tilde{\alpha}$ be a measurable function such that, for all $(t,x) \in[0,T] \times \R^d$

\begin{align*}
H(t,x,D\tilde{\phi}(t,x),D^2\tilde{\phi}(t,x)) = &-b(t,x, \tilde{\alpha}(t,x)) . D\tilde{\phi}(t,x) - \sigma^t\sigma(t,x, \tilde{\alpha}(t,x)) . D^2\tilde{\phi}(t,x) \\
&-f_1(t,x, \tilde{\alpha}(t,x)). 
\end{align*}

\noindent We use the assumption that $H$ is continuously differentiable in $(p,M)$. Indeed, in this case one has, thanks to the  Envelope theorem (see \cite{Milgrom2002}),  

\begin{equation}
\label{Envelope}
\left \{
\begin{array}{ll}
 \partial_p H(t,x,D\tilde{\phi}(t,x),D^2\tilde{\phi}(t,x)) = -b(t,x, \tilde{\alpha}(t,x))  \\
\partial_M H(t,x,D\tilde{\phi}(t,x),D^2\tilde{\phi}(t,x))= -\sigma ^t\sigma(t,x, \tilde{\alpha}(t,x)).  
\end{array}
\right .
\end{equation}

\noindent Since $\partial_p H$ and $\partial_M H$ are supposed to be locally Lipschitz continuous respectively in $p$ and $M$ and uniformly in $x$ and since $|\partial_M H|$ is bounded from below by $\sqrt{d}\Lambda^- >0$, using the fact (Theorem \ref{MainHJB}) that $\tilde{\phi}$ belongs to $\cC_b^{\frac{3+\alpha}{2}, 3+ \alpha}([0,T] \times \R^d)$ we see that the coefficients of the functions, $(t,x) \rightarrow \partial_pH(t,x,D\tilde{\phi}(t,x),D^2\tilde{\phi}(t,x))$ and  $(t,x) \rightarrow \partial_M H(t,x,D\tilde{\phi}(t,x),D^2\tilde{\phi}(t,x))$ are Lipschitz in $x$, uniformly in $t$. Thus there is a unique strong solution of the SDE

$$d\tilde{X}_t = b(t,X_t,\alpha(t,X_t))dt  + \sqrt{2} \sigma(t,X_t,\alpha(t,X_t))dB_t$$
starting from $X_0$. Therefore $\cL(\tilde{X}_t) = \tilde{m}(t) $ for all $t \in [0,T]$ and, in particular, $\Psi(\cL(\tilde{X}_T)) \leq 0 $. This means that $\tilde{\alpha}_t := \tilde{\alpha}(t,\tilde{X}_t)$ is admissible for the strong problem. Since $H$ is $\mathcal{C}^1$ we know that for all $(t,x,p,M) \in [0,T] \times \R^d \times \R^d \times \mathbb{S}_d(\R)$, 

\begin{align}
\notag H(t,x,p,M) &= p.\partial_p H(t,x,p,M) + M.\partial_M H(t,x,p,M) \\
&- L(t,x,-\partial_p H(t,x,p,M) , -\partial_M H(t,x,p,M))
\label{Pasdideedenom}
\end{align}
and therefore, \eqref{Envelope} implies that

$$L(t,x,-\partial_p H(t,x,D\tilde{\phi}(t,x),D^2\phi(t,x)),-\partial_M H(t,x,D\phi(t,x),D^2\phi(t,x))) = f_1(t,x,\alpha(t,x))$$
and thus $J_{SP}(\tilde{\alpha}) = J_{RP}(\tilde{m}, \tilde{\omega}, \tilde{W}) = V_{SP}(X_0)$ from which it comes that $V_{RP} (m_0) \geq V_{SP} (X_0)$. The reverse inequality being clear, we get $V_{RP}(m_0) = V_{SP}(X_0) $ and $\tilde{\alpha}$ is a solution to the strong problem. This shows in particular that optimal controls for the strong problem \ref{SP} do exist. Now take $\alpha$ a Markovian solution to the strong problem. If $X$ is the corresponding process, we take $(m, \omega, W) = (m, b(x,\alpha^1) m, \sigma^t\sigma(x,\alpha^2)m) $. Then, $(m, \omega, W)$ is admissible for the relaxed problem and we have $J_{RP} (m, \omega, W) \leq J_{SP} (\alpha^1, \alpha^2) = V_{SP}(X_0)$. And thus, $J_{RP} (m, \omega, W) = V_{RP}(m_0) $.  Finally, $(m, \omega, W)$ is optimal for the relaxed problem and we can apply Proposition \ref{From general to linear constraint} and  Corollary \ref{OptimalityConditionsLinearConstraint} to conclude. Now if we use $\tilde{\phi}$ in \ref{OC} as a test function for the Fokker-Planck equation, recalling \ref{Pasdideedenom} as well as the convention $\displaystyle \int_{\R^d} \frac{\delta U}{\delta m}(\mu,x)d\mu(x) =0$ for the linear functionnal derivative we get that 

\begin{align*}
& \int_{\R^d} \tilde{\phi}(0)dm_0(x) \\
&= \int_0^T \int_{\R^d} L \left( t,x,-\partial_pH(t,x,D\tilde{\phi}(t,x),D^2\tilde{\phi}(t,x)), -\partial_MH(t,x,D\tilde{\phi}(t,x),D^2\tilde{\phi}(t,x)) \right) d\tilde{m}(t)(x)dt
\end{align*}
and therefore $\displaystyle V_{SP}(X_0) = \int_{\R^d} \tilde{\phi}(0)dm_0(x) + \int_{\R^d} f_2(t,\tilde{m}(t))dt + g(\tilde{m}(T))$.

\end{proof}

\subsection{Convex constraint and convex costs}

Now we show that the conditions are also sufficient when $\Psi$, $f_2$ and $g$ are convex functions in the measure variable. Notice that this case covers in particular the problem with expectation constraint and costs in expectation form when $\Psi$, $f_2$ and $g$ are linear.

\begin{proof}[Proof of Theorem \ref{MainConv}]

\noindent Let $(\tilde{\lambda}, \tilde{\phi}, \tilde{m})$ be a solution to the system of optimality conditions \ref{OC} and let $\tilde{X}_t$ be the solution to

\begin{equation*}
\left \{
\begin{array}{ll}
 d\tilde{X}_t = b(t,X_t,\tilde{\alpha}(t,\tilde{X}_t)) dt + \sqrt{2}\sigma(t,\tilde{X}_t,\tilde{\alpha}(t,X_t))dB_t \\
\tilde{X}_0 = X_0. 
\end{array}
\right.
\end{equation*}
for some measurable function $\tilde{\alpha} : [0,T] \times \R^d \rightarrow A$ such that, for any $(t,x) \in [0,T] \times \R^d $, 

$$H(t,x,D\tilde{\phi}(t,x),D^2\tilde{\phi}(t,x)) =-b(t,x,\tilde{\alpha}(t,x)).D\tilde{\phi}(t,x) - \sigma ^t\sigma(t,x,\tilde{\alpha}(t,x)).D^2\tilde{\phi}(t,x) - f_1(t,x,\tilde{\alpha}(t,x)). $$
Since $b(t,x,\tilde{\alpha}(t,x)) = -\partial_pH(t,x,D\tilde{\phi}(t,x),D^2\tilde{\phi}(t,x))$, $\sigma ^t\sigma(t,x,\tilde{\alpha}(t,x)) = -\partial_M H(t,x,D\tilde{\phi}(t,x),D^2\tilde{\phi}(t,x))$ and $\phi$ belongs to $\mathcal{C}_b^{3+\alpha, \frac{3+\alpha}{2}}([0,T] \times \R^d)$ the SDE admits a unique strong solution.

\noindent We are going to show that $\tilde{\alpha}_t := \tilde{\alpha}(t,\tilde{X}_t) $ is a solution to the optimal control problem. The law of $\tilde{X}_t$ is $\tilde{m}(t)$ and we deduce that $\Psi(\cL(\tilde{X}_T)) \leq 0 $ and $\tilde{\alpha}_t$ is admissible. Now we show that $\tilde{\alpha}_t$ is indeed optimal among the admissible strategies. Let $\alpha_t$ be an admissible control, $X_t$ the corresponding process and $m(t):= \mathcal{L}(X_t)$. Let also $J_{SP}'$ be defined on $\mathcal{U}_{ad}$ as follows

$$ J_{SP}'(\alpha_t) := \E \left( \int_0^T \left(f_1(t,X_t,\alpha_t) + \frac{\delta f_2}{\delta m}(t,\tilde{m}(t),X_t)\right)dt + \frac{\delta g}{\delta m}(\tilde{m}(t),X_T) + \tilde{\lambda} \frac{\delta \Psi}{\delta m}(\tilde{m}(T),X_T) \right). $$

\noindent Using a classical verification argument and the fact that $\tilde{\phi}$ solves the HJB equation, we get that $ J_{SP}' (\tilde{\alpha}_t) \leq J_{SP}'(\alpha_t)$. Now by convexity of $\Psi$, $f_2$ and $g$ we get

$$\mathbb{E} \left[ \int_0^T f_2(t,\tilde{m}(t))dt - \int_0^T f_2(t,m(t))dt + \int_0^T \frac{\delta f_2}{\delta m}(t,\tilde{m}(t),X_t)dt \right] \leq 0,$$

$$\mathbb{E} \left[ g(\tilde{m}(T)) - g(m(T)) + \frac{\delta g}{\delta m}(\tilde{m}(T),X_T) \right] \leq 0$$
and

$$\tilde{\lambda} \mathbb{E} \left[ \frac{\delta \Psi}{\delta m}(\tilde{m}(T),X_T) \right] = \tilde{\lambda} \left( \Psi(\tilde{m}(T)) + \mathbb{E} \left[ \frac{\delta \Psi}{\delta m}(\tilde{m}(T),X_T) \right] \right) \leq \tilde{\lambda}\Psi(m(T)) \leq 0.$$
Therefore we get that $J_{SP}(\tilde{\alpha}) \leq J_{SP}(\alpha)$ and $\tilde{\alpha}$ is optimal for the strong problem.
\end{proof}

\section{The HJB equation}

\label{sec: HJB}

The aim of this section is to show that the HJB equation 
\begin{equation}
\left \{
\begin{array}{ll}
\displaystyle -\partial_t u(t,x) + H(t,x, Du(t,x),D^2u(t,x)) = f_2'(t,x) & \mbox{in } [0,T]\times \R^d\\
u(T,x) = g'(x) & \mbox{in } \R^d
\end{array}
\right.
\label{HJBFinalSection}
\end{equation}

\noindent admits a unique strong solution $u \in \cC_b^{ \frac{3+\alpha}{2}, 3+\alpha}([0,T] \times \R^d)$. We first recall that, by classical arguments (see \cite{Fleming2006} Chapter $V$), the equation \eqref{HJBFinalSection} satisfies a comparison principle between bounded uniformly continuous sub and super-solutions and admits therefore a unique bounded uniformly continuous viscosity solution. We denote by $u$ this solution and now observe that it is enough to prove that $u$ is Lipschitz continuous in $(t,x)$ to deduce that it is actually in $\cC_b^{ \frac{3+\alpha}{2}, 3+\alpha}([0,T] \times \R^d)$. Indeed, if $u$ is Lipschitz continuous in space we can use Theorem VII.3 in \cite{Ishii1990} to deduce that $u$ is semi-concave with a modulus of semi-concavity uniform in $(t,x)$. Now using the strict parabolicity of the equation, the fact that $u$ is Lipschitz and semi-concave we can prove that $u$ is also semi-convex (see \cite{Imbert2006} Theorem 4 with the help of \cite{Alvarez1997} ) and therefore $Du$ is continuous and Lipschitz in space. At this point, using the Hölder regularity in time of $f_2'$, we can use the results of  \cite{Wang1992a} and \cite{Wang1992b} (see also the last section of \cite{Bourgoing2004}) to deduce that $u$ belongs to $\mathcal{C}_b^{1,2}([0,T] \times \R^d)$. Finally, by differentiating the equation we can use results on uniformly parabolic linear PDEs (Theorem IV.5.1 of \cite{Ladyzenskaja1968}) to conclude that $u$ belongs to $\mathcal{C}^{\frac{3+\alpha}{2},3+\alpha}_b([0,T] \times \R^d)$. 

Now we proceed to show that $u$ is indeed globally Lipschitz continuous. We first show this when $f_2'$ is also globally Lipschitz continuous (and not just Hölder continuous in time) and then we use an approximation argument.

\begin{lemma}
Suppose that Assumptions \ref{IA} and \ref{HAss} hold. Take $g' \in \mathcal{C}_b^{3+\alpha}(\R^d)$ and suppose that $f_2' \in \mathcal{C}_b([0,T] \times \R^d)$ is globally Lipschitz continuous in $(t,x)$ and $\mathcal{C}^1$ in $x$.  Let $u$ be the unique viscosity solution to \eqref{HJBFinalSection}. Then $u$ is also globally Lipschitz-continuous.
\end{lemma}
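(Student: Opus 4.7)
The plan is to derive separately a uniform bound on $|Du|$ and a uniform bound on $|\partial_t u|$ on $[0,T]\times\R^d$; the two bounds together yield the global Lipschitz regularity claimed.

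For the spatial bound, I would apply the weak Bernstein method, following Ishii--Lions and Lions--Souganidis. After regularizing $u$ into a classical $\mathcal{C}^{1,2}$ solution $u^\varepsilon$ of a uniformly parabolic approximation (e.g.\ by mollifying the coefficients and adding $-\varepsilon\Delta u^\varepsilon$), I would differentiate the equation in $x_i$, multiply by $2D_i u^\varepsilon$, and sum over $i$. Using the identity $2D_i u\,\partial_{jk}D_i u = \partial_{jk}(D_i u)^2 - 2\,\partial_j D_i u\,\partial_k D_i u$, one finds that $w^\varepsilon := |Du^\varepsilon|^2$ satisfies the linear parabolic inequality
\begin{equation*}
-\partial_t w^\varepsilon + \partial_p H \cdot Dw^\varepsilon + \partial_M H : D^2 w^\varepsilon + 2\,\mathrm{Tr}\bigl((-\partial_M H)\,(D^2 u^\varepsilon)^2\bigr) = -2\,Du^\varepsilon \cdot D_x H + 2\,Du^\varepsilon \cdot D_x f_2'.
\end{equation*}
By Assumption \ref{Uniformellipticity} the $\mathrm{Tr}$ term is bounded below by $2\Lambda^-|D^2 u^\varepsilon|^2$. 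Using Assumption \ref{DerGrDxMHA} (Lipschitzness of $D_xH$ in $M$) together with Assumption \ref{DerGrDxHA2} (linear growth of $D_xH(\cdot,\cdot,p,0)$ in $p$), one has $|D_xH(t,x,p,M)| \leq C(1+|p|) + C|M|$, and Young's inequality absorbs the resulting cross term $|Du^\varepsilon||D^2 u^\varepsilon|$ into $\Lambda^-|D^2 u^\varepsilon|^2$; the remaining contributions are bounded by $C(1+w^\varepsilon)$, using also the Lipschitz assumption on $f_2'$. A maximum principle argument (with a cutoff or barrier function to handle the unbounded domain, and terminal control via $\|Dg'\|_\infty^2$ at $t=T$) then yields $\|Du^\varepsilon\|_\infty \leq L$ uniformly in $\varepsilon$. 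Passing to the limit gives the spatial Lipschitz estimate for $u$.

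For the temporal bound the argument is classical. First, I would check that $g'(x)\pm C(T-t)$ are respectively super- and sub-solutions of the HJB equation for $C$ large enough; this only requires that $|H(t,x,Dg',D^2 g')|$ is bounded (which holds because $g'\in\mathcal{C}_b^{3+\alpha}$) and that $f_2'$ is bounded. Comparison with $u$ yields $|u(t,x)-g'(x)|\leq C(T-t)$. Next, to get time-Lipschitzness on the interior, compare $u$ with the time-shifted function $u(\cdot+h,\cdot)\pm Ch$ on $[0,T-h]$: Assumption \ref{DerGrDtHA} (bounded $\partial_tH$) and the Lipschitz regularity of $f_2'$ in $t$ make the shifted function a sub/super-solution up to an $O(h)$ source error, and the near-terminal estimate controls the $O(h)$ terminal mismatch at $T-h$. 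The comparison principle then delivers $|u(t,x)-u(s,x)|\leq C|t-s|$.

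The main obstacle is the Bernstein step. The difficulty is that $Du\cdot D_xH$ a priori involves $D^2 u$, which is not known to be bounded; the only good negative term comes from uniform ellipticity, in the form $-2\Lambda^-|D^2 u|^2$. Absorbing the mixed term $|Du||D^2u|$ into $\Lambda^-|D^2u|^2$ by Young leaves a remainder proportional to $|Du|^2$, which is exactly the quantity one wishes to bound, so the algebra just barely closes. This is precisely why Assumptions \ref{DerGrDxMHA} and \ref{DerGrDxHA2} have the form stated: any stronger growth of $D_xH$ in $p$ or $M$ would destroy the absorption. An alternative implementation via the Ishii--Lions doubling of variables (testing $u(t,x)-u(t,y)-L\psi(x-y)$ and applying the Crandall--Ishii lemma) bypasses the regularization but performs the same structural computation.
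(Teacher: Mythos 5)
Your outline splits the estimate into a spatial bound and a temporal bound; the paper does the same, but in the opposite order (time first, then space). Your temporal argument (barriers of the form $g'(x)\pm C(T-t)$ and comparison with the time-shifted solution) is essentially identical to the paper's. The genuine difference is in the spatial bound. You propose the \emph{classical} Bernstein method: regularize $u$ into smooth solutions $u^\varepsilon$, differentiate the equation, look at $w^\varepsilon=|Du^\varepsilon|^2$, absorb $|Du^\varepsilon|\,|D^2u^\varepsilon|$ into $\Lambda^-|D^2u^\varepsilon|^2$, and run a Gr\"onwall/maximum principle. The paper instead performs a \emph{weak} Bernstein argument directly on the viscosity solution $u$: a doubling-of-variables function $u(t,x)-u(t,y)-K|x-y|-\beta|y|^2-\beta/t$, the Crandall--Ishii lemma, and the matrix-interpolation trick of Armstrong--Cardaliaguet (construct $Z_\lambda$ with $\dot Z_\lambda=\gamma^{-1}Z_\lambda^2$ connecting $X$ to $Y+2\beta I_d$). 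The contradiction there is numerical: the coercivity of Assumption~\ref{GrA} forces $H(\bar t,x_\lambda,\xi_\lambda,0)\gtrsim K^{r_1}$ with $r_1>1$, while the various terms on the right are $O(K)$ or $O(K^2)$; for your Bernstein computation coercivity plays no role at all, you use only Assumptions \ref{DerGrDxMHA} and \ref{DerGrDxHA2}. So your remark that the doubling ``performs the same structural computation'' is not accurate: the contradiction mechanisms are different, and the paper's proof genuinely invokes Assumption~\ref{GrA}.

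The issue with your route is precisely the step you dispose of in a parenthesis, namely ``after regularizing $u$ into a classical $\mathcal{C}^{1,2}$ solution $u^\varepsilon$''. Adding $-\varepsilon\Delta$ does nothing here (the operator is already uniformly parabolic), and mollifying the coefficients does not improve the $p$-growth of $H$, which is up to $|p|^{r_2}$ with $r_2>1$ and with $D_pH$ growing like $|p|^\nu$. This is exactly the regime in which gradient blow-up is a real possibility, and classical solvability of the (regularized) equation is not automatic: one would normally prove it by first establishing the very Lipschitz a priori estimate you are trying to derive. That chicken-and-egg is the reason the paper uses the viscosity-level doubling argument for this lemma, and deploys the classical Bernstein computation you describe only later, in the proof of Theorem~\ref{MainHJB}, once the lemma has produced $\mathcal{C}_b^{(3+\alpha)/2,3+\alpha}$ solutions $u_n$ to be differentiated. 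To close your argument you would need to make the regularization precise: for instance truncate the Hamiltonian at $|p|\le R$ and $|M|\le R$ so the truncated problem is classically solvable by parabolic Schauder theory, show the Bernstein estimate is uniform in $R$ and gives $\|Du_R\|_\infty$ below the truncation level, and then pass to the limit by stability of viscosity solutions. That is doable but nontrivial, and as written your proposal has a genuine gap at this point.
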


\begin{proof}
We first show the regularity in time. We observe that, since $g' \in \cC_b^{2}(\R^d)$,  for 
$$C_{g'} \geq \sup_{(t,x)} |H(t,x,Dg'(t,x),D^2g'(t,x)) -f_2'(t,x)|, $$
$g'(x) - C_{g'} (T-t)$ and $g'(x) + C_{g'}(T-t)$ are respectively viscosity sub-solution and super-solution to \eqref{HJBFinalSection}. By comparison we have that $|u(T-t,x) -g'(x)|\leq C_{g'} t$ for all $t \in [0,T]$. If we fix $s \in [0,T]$ and define for all $(t,x) \in [s,T] \times \R^d$, $v(t,x) = u(t-s,x)$ it is plain to check that $v^+(t,x) := v(t,x) - C'st$ and $v^-(t,x):=v(t,x)+C'st$ are respectively sub and super solutions where $C'$ is such that 

$$ |H(t-s,x,p,M) - H(t,x,p,M)| + |f_2'(t-s,x) - f_2'(t,x) | \leq C's $$
for all $s \in [0,T]$, all $t \in [s,T]$ and all $(x,p,M) \in \R^d \times \R^d \times \mathbb{S}_d(\R)$. By comparison we find that for all $s \in [0,T]$, all $t \in [s,T]$ and all $x \in \R^d$,
\begin{align*}
u(t,x) - v^+(t,x) &\leq \sup_{x \in \R^d} u(T,x) - v^+(T,x) \\
&\leq \sup_{x \in \R^d} g'(x) - u(T-s,x) + C'Ts \leq C_{g'} s + C' T s.
\end{align*}
Doing the same with $v^-$ we get that $|u(t,x) - u(t-s,x)| \leq (C_{g'} + C'T)s$ for all $s \in [0,T]$, all $t \in [s,T]$ and all $x \in \R^d$.

Now we show the space regularity. Let $K > \|Dg'\|_{\infty}$ such that $H(t,x,p,0) > L_T + \|f_2' \|_{\infty}$ for all $(t,x,p) \in [0,T] \times \R^d \times \R^d$ such that $|p| \geq K-1$ and where $L_T$ is an upper bound for the time-Lipschitz constant of $u$. We are going to show that for all $(t,x,y) \in [0,T]\times \R^d\times \R^d$, $ u(t,x) - u(t,y) \leq K|x-y|$ when $K$ is large enough.  Suppose on the contrary that $ \delta := \sup_{(t,x,y) \in \{[0,T]\times \R^d\times \R^d} \{u(t,x) - u(t,y) - K|x-y| \}$ is positive. Let $\beta$ be a small positive parameter and define

$$\phi_{\beta}(t,x,y) := u(t,x) - u(t,y) - K|x-y| - \beta|y|^2-\beta\frac{1}{t}.$$

\noindent The function $\phi_{\beta}$ reaches its maximum at some point $(\bar{t},\bar{x},\bar{y}) \in (0,T]\times \R^d\times \R^d $ and there is $\beta_0 >0$ such that for $0 < \beta \leq \beta_0$

\begin{equation}
\phi_{\beta}(\bar{t},\bar{x},\bar{y}) \geq \frac{\delta}{2}.
\label{In1}
\end{equation}

\noindent Suppose that $\beta \leq \beta_0$ and $\bar{t}=T$, then

\begin{align*}
\frac{\delta}{2} \leq \phi_{\beta}(T,\bar{x},\bar{y}) &= u(T,\bar{x}) - u(T, \bar{y}) -K|\bar{x}-\bar{y}| - \beta|\bar{y}|^2 -\frac{\beta}{T} \\
& \leq (\|Dg'\|_{\infty} - K)|\bar{x}-\bar{y}|.
\end{align*}

\noindent But this is impossible since $K > \|Dg'\|_{\infty}$ and $\delta >0$. Thus for all  $\beta \leq \beta_0$, $\bar{t} \neq T$. From \eqref{In1} we deduce that $\beta |\bar{y}|^2 \leq 2\|u\|_{\infty} $ and thus that $\beta|\bar{y}| \rightarrow 0$ as $\beta \rightarrow 0$ but we also deduce that  

$$ \frac{\delta}{2} \leq u(\bar{t},\bar{x}) - u(\bar{t},\bar{y}) - K|\bar{x}-\bar{y}|, $$
in particular, $\bar{x} \neq \bar{y}$.  Since $\bar{t} \neq T$ for $\beta \leq \beta_0$, we can apply the maximum principle for semi-continuous functions from \cite{Crandall1992}. Let $\varphi_{\beta} (t,x,y)= K|x-y| +\beta \frac{1}{t}$. Computing the various derivatives for $|x-y| >0$ gives 

\begin{equation*}
\left \{
\begin{array}{ll}
\partial_t \varphi_{\beta}(t,x,y) = - \frac{\beta}{t^2} \\
\partial_x \varphi_{\beta}(t,x,y) = \frac{K}{|x-y|} (x-y) \\
\partial_y \varphi_{\beta}(t,x,y) = \frac{K}{|x-y|} (y-x) \\
D^2\varphi_{\beta}(t,x,y) = \frac{K}{|x-y|} \begin{pmatrix}
  I_d & -I_d\\ 
  -I_d & I_d
\end{pmatrix} 
- \frac{K}{|x-y|^3} \begin{pmatrix} (x-y)\otimes(x-y) & -(x-y)\otimes(x-y) \\ -(x-y)\otimes(x-y) & (x-y)\otimes(x-y) \end{pmatrix}.
\end{array}
\right.
\end{equation*}

\noindent In particular, if $N:= (x-y)\otimes(x-y)$, then $N \geq 0$ (rank one symetric matrice with positive trace) and thus it is elementary to show that $\begin{pmatrix} N & -N \\ -N & N \end{pmatrix} \geq 0$. And thus, 
 
 $$ D^2\varphi_{\beta}(t,x,y) \leq \frac{K}{|x-y|} \begin{pmatrix} I_d & -I_d\\ -I_d & I_d \end{pmatrix}.  $$
 
 \noindent Now, from the maximum principle, we get $\nu \in \R$, $X,Y \in \mathbb{S}_d(\R) $ such that

\begin{equation*}
\left \{
\begin{array}{ll}
\left(\nu, K \frac{\bar{x}-\bar{y}}{|\bar{x}-\bar{y}|},X \right) \in \bar{\mathcal{P}}^{2,+}u(\bar{t},\bar{x}) \\
\left(\nu + \frac{\beta}{\bar{t}^2}, K \frac{\bar{x}-\bar{y}}{|\bar{x}-\bar{y}|}-2\beta\bar{y},Y \right) \in \bar{\mathcal{P}}^{2,-}u(\bar{t},\bar{y}) \\
\end{array}
\right.
\end{equation*}

\noindent and

\begin{equation*}
\begin{pmatrix} X & 0 \\ 0 & -(Y+ 2 \beta I_d) \end{pmatrix} \leq 3 \frac{K}{|\bar{x}-\bar{y}|} \begin{pmatrix} I_d & -I_d \\ -I_d & I_d \end{pmatrix}.
\end{equation*}

\noindent Observe that $|\nu|$ is bounded by $L_T$ the time -Lipschitz constant of $u$ and thus $|\nu|$ is bounded independently of $K$. Now we use the equation satisfied by $u$ to get

\begin{equation*}
H \left(\bar{t},\bar{x}, K\frac{\bar{x}-\bar{y}}{|\bar{x}-\bar{y}|},X \right) - f_2'(\bar{t}, \bar{x}) \leq \nu \leq H \left(\bar{t},\bar{y}, K\frac{\bar{x}-\bar{y}}{|\bar{x}-\bar{y}|}- 2\beta \bar{y},Y \right) - f_2'(\bar{t},\bar{y}).
\end{equation*}

\noindent From now on, we let $ \bar{\xi} := K\frac{\bar{x}-\bar{y}}{|\bar{x}-\bar{y}|}$ and $ \gamma := \frac{3K}{|\bar{x}-\bar{y}|}$. We are going to show that the information

\begin{equation}
\left \{
\begin{array}{ll}
|\nu| \leq L_T \\
|\bar{\xi} |= K \\
\begin{pmatrix} X & 0 \\ 0 & -(Y+ 2 \beta I_d) \end{pmatrix} \leq \gamma \begin{pmatrix} I_d & -I_d \\ -I_d & I_d \end{pmatrix} \\
H \left(\bar{t},\bar{x}, \bar{\xi},X \right) - f_2'(\bar{t}, \bar{x})\leq \nu \leq H \left(\bar{t},\bar{y}, \bar{\xi}- 2\beta \bar{y},Y \right)- f_2'(\bar{t},\bar{y}) \\
\end{array}
\right.
\label{Data1}
\end{equation}

\noindent is inconsistent whenever $K$ is sufficiently large. Let $\bar{\eta} := \bar{\xi} - 2\beta \bar{y}$ and for any $\lambda \in [0,1] $, $x_{\lambda} := (1-\lambda)\bar{x} + \lambda \bar{y}$ and $\xi_{\lambda} := (1-\lambda)\bar{\xi} + \lambda \bar{\eta} = \bar{\xi}- 2\lambda \beta \bar{y}$. From \cite{Armstrong2018}, Lemma A.2, there exists a $\mathcal{C}^1$ map, $\lambda \rightarrow Z_{\lambda}$ from $[0,1] \rightarrow \mathbb{S}_d(\R) $ such that

\begin{equation*}
\left \{
\begin{array}{ll}
 \frac{d}{d\lambda}Z_{\lambda} = \gamma^{-1} Z_{\lambda}^2, \\
Z_0 = X \\
\forall \lambda \in [0,1]$, $X \leq Z_{\lambda} \leq Y + 2\beta I_d. 
\end{array}
\right.
\end{equation*}

Let us define $l : [0,1] \rightarrow \R$ by $ l(\lambda) = H(\bar{t},x_{\lambda}, \xi_{\lambda}, Z_{\lambda}) -f_2'(\bar{t},x_{\lambda})$, so that $ l(0) = H \left(\bar{t},\bar{x}, \bar{\xi},X \right) -f_2'(\bar{t},\bar{x})\leq \nu $ and (using $Z_1 \leq Y + 2\beta I_d$ and the boundness of $\partial_M H$)

$$l(1) = H(\bar{t},\bar{y}, \bar{\eta}, Z_1) \geq H(\bar{t},\bar{y}, \bar{\eta}, Y+2\beta I_d) -f_2'(\bar{t}, \bar{y})\geq H(\bar{t},\bar{y}, \bar{\eta},Y) - C\beta -f_2'(\bar{t}, \bar{y})\geq \nu - C\beta,$$
where $C = 2\Lambda^+ \sqrt{d} $. Thus, $l$ being $\mathcal{C}^1$, there exists $\lambda \in [0,1]$ such that 

\begin{align}
l(\lambda) = \nu- C\lambda\beta,
\label{Data2} \\
l'(\lambda) \geq -C\beta.
\label{Data3}
\end{align}

\noindent From inequality \eqref{Data2} we are going to obtain a lower bound on $|Z_{\lambda} | = \sqrt{\Tr(Z_{\lambda}^2)}$ and from inequality \eqref{Data3} we are going to obtain an upper bound on  $|Z_{\lambda} |$. Combining the two bounds will get a contradiction for $K$ large enough. First we exploit \eqref{Data2}. It gives us

\begin{align*}
H(\bar{t},x_{\lambda},\xi_{\lambda}, 0) -f_2'(\bar{t}, x_{\lambda}) - \nu +C\lambda \beta &= H(\bar{t},x_{\lambda},\xi_{\lambda}, 0) - H(\bar{t}, x_{\lambda}, \xi_{\lambda},Z_{\lambda}) \\
&\leq -\partial_M H(\bar{t}, x_{\lambda}, \xi_{\lambda},Z_{\lambda}) . Z_{\lambda} \\
&\leq \sqrt{d} \Lambda^+ |Z_{\lambda}|
\end{align*}
where we used Cauchy-Scharwz inequality at the last step. Therefore we have

\begin{equation}
|Z_{\lambda} | \geq \frac{H(\bar{t},x_{\lambda},\xi_{\lambda},0) -f_2'(\bar{t}, x_{\lambda})- \nu +C\lambda \beta}{\sqrt{d}\Lambda^+}. 
\label{Data2'}
\end{equation}

\noindent Now we use \eqref{Data3}. Computing the derivative of $l$ gives

\begin{align*} 
l'(\lambda) &= \partial_xH(\bar{t},x_{\lambda},\xi_{\lambda},Z_{\lambda}).(\bar{y}-\bar{x}) -\partial_xf_2'(\bar{t},x_{\lambda}).(\bar{y}-\bar{x})- 2\beta \partial_p H(\bar{t},x_{\lambda},\xi_{\lambda},Z_{\lambda}).\bar{y} \\
&+ \gamma^{-1} \partial_MH(\bar{t},x_{\lambda},\xi_{\lambda},Z_{\lambda}).Z_{\lambda}^2 \geq -C\beta 
\end{align*}
and since $\displaystyle -\partial_MH \geq \Lambda^-I_d$ we get

\begin{equation}
|Z_{\lambda}|^2 \leq \frac{1}{\Lambda^-} \left [ \gamma C\beta + \gamma \partial_xH(\bar{t},x_{\lambda},\xi_{\lambda},Z_{\lambda}).(\bar{y}-\bar{x}) -\gamma \partial_xf_2'(\bar{t},x_{\lambda}).(\bar{y}-\bar{x})- 2\beta \gamma \partial_p H(\bar{t},x_{\lambda},\xi_{\lambda},Z_{\lambda}).\bar{y} \right].
\label{Data3'}
\end{equation}
Recalling that $\gamma (\bar{x}-\bar{y}) = 3 \bar{\xi} = 3 \xi_{\lambda} +6 \lambda \beta \bar{y}$, combining \ref{Data2'} and \ref{Data3'} and using Assumption \ref{DerGrDxMHA}, we get that for some new positive constant $C$ independent from $(K,\delta,\beta,\lambda)$,

$$H(\bar{t},x_{\lambda},\xi_{\lambda},0)^2 \leq C \left( 1 +\nu^2 + |\xi_{\lambda}|^2 +\xi_{\lambda} .\partial_xH(\bar{t}, x_{\lambda}, \xi_{\lambda},0)+ \gamma \beta \bar{y}.\partial_p H(\bar{t},x_{\lambda},\xi_{\lambda},Z_{\lambda}) \right). $$
We get a contradiction letting $\beta \rightarrow 0$ as soon as $K$ is big enough since $|\xi| =K$, $H(t,x,p,0) \geq \alpha_1 |p|^{r_1} - C_H$ with $r_1>1$ for all $(t,x,p)$ and $\partial_xH$ satisfies Assumption \ref{DerGrDxHA}.
\end{proof}

To conclude with the proof we need to show Lipschitz estimates which are independent from the time regularity of $f_2'$. This is what we do in the following proof of Theorem \ref{MainHJB}.

\begin{proof}[Proof of Theorem \ref{MainHJB}]
When $f_2'=0$, the previous lemma and the discussion at the beginning of this section are enough to conclude. In the general case, take a smooth kernel $\rho$ with support in $[-1,1]$ and define for all $n \in \mathbb{N}^*$, $\rho_n(r) := n\rho(nr)$ and $\displaystyle f'^{(n)}_2(t,x) := \int_{-1}^1 f_2'(s,x) \rho_n(t-s)ds$ where we extended $f_2'$ to $[-1,T+1] \times \R^d$ by $f_2'(t,x) = f_2'(0,x)$ for $t \in [-1,0]$ and $f_2'(t,x) = f_2'(T,x)$ for $t\in [T, T+1]$. We also define $u_n$ to be the viscosity solution to 

\begin{equation*}
\left \{
\begin{array}{ll}
\displaystyle -\partial_t u_n(t,x) + H(t,x, Du_n(t,x),D^2u_n(t,x)) = f'^{(n)}_2(t,x) & \mbox{in } [0,T]\times \R^d\\
u(T,x) = g'(x) & \mbox{in } \R^d.
\end{array}
\right.
\end{equation*}
Thanks to the previous lemma and the discussion at the beginning of this section we know that $u_n$ actually belongs to $\mathcal{C}_b^{\frac{3+\alpha}{2},3+\alpha}([0,T] \times \R^d)$. If we define $w_n := \frac{1}{2}e^{\mu t} \left |Du_n \right |^2$ for some $\mu >0$ we get, after differentiating the HJB equation and taking scalar product with $e^{\mu t}Du_n$:

\begin{align*}
-\partial_tw_n &+ D_pH.Dw_n + D_MH.D^2w_n = Df_n.Du_ne^{\mu t} - D_xH.Du_ne^{\mu t} + e^{\mu t}D_MH.(D^2u_n)^2 -  \frac{1}{2}\mu e^{\mu t}|Du_n|^2 \\
& \leq Df'^{(n)}_2.Du_n e^{\mu t} + C_{D_xH}(1 + |Du_n| + |D^2u_n|)e^{\mu t} |Du_n| - e^{\mu t} \Lambda^- |D^2u_n|^2 -  \frac{1}{2}\mu e^{\mu t}|Du_n|^2
\end{align*}
where we used the growth assumption \ref{DerGrDxHA2} on $D_xH$, Assumption \ref{DerGrDxMHA} and the uniform ellipticity of $H$. Now we can choose $\mu = \mu(\|Df \|_{\infty}, C_{D_xH}, \Lambda^-) >0$ such that the right-hand side of the above expression is bounded by above and, by the maximum principle for parabolic equations we get that $\|Du_n\|_{\infty} \leq C$ for some $C = C(\|Dg' \|_{\infty}, \|Df_2' \|_{\infty}, C_{D_xH}, \Lambda^-) >0$.

Now we let $v_n := \partial_t u_n$. By differentiating the HJB equation with respect to time we get that $v_n$ solves 

\begin{equation*}
\left \{
\begin{array}{ll}
\displaystyle -\partial_t v_n + D_pH.Dv_n + D_MH.D^2v_n = -\partial_tH + \partial_t f'^{(n)}_2 & \mbox{in } [0,T]\times \R^d\\
v_n(T,x) = H(T,x,Dg',D^2g') - f'^{(n)}_2(T,x) & \mbox{in } \R^d.
\end{array}
\right.
\end{equation*}
Fix $(t_0,x_0) \in [0,T] \times \R^d$ and consider a weak solution $m_n \in \mathcal{C}([t_0,T] , \mathcal{P}_2(\R^d))$ to the adjoint equation 

\begin{equation*}
\left \{
\begin{array}{ll}
\displaystyle \partial_tm_n - \mdiv(D_pHm_n) + \sum_{i,j=1}^d \partial_{i,j}^2 ((D_MH)_{i,j}m) = 0  & \mbox{in } [t_0,T]\times \R^d\\
\displaystyle m_n(t_0) = \delta_{x_0}
\end{array}
\right.
\end{equation*}
Integrating $v_n$ against $m_n$ gives, after integration by part and reorganizing the terms:

\begin{align*}
v_n(t_0,x_0) &=\int_{\R^d} \left[H(T,x,Dg',D^2g')-f'^{(n)}_2(T,x) \right]dm^n(T)(x) \\
&- \int_{t_0}^T \int_{\R^d} \partial_tH(t,x,Du_n,D^2u_n) dm^n(t)(x)dt + \int_{t_0}^T \int_{\R^d} \partial_t f'^{(n)}_2(t,x)dm^n(t)(x)dt
\end{align*}
But, again by integration by part, we have

\begin{align*} 
\int_{t_0}^T &\int_{\R^d} \partial_t f'^{(n)}_2(t,x)dm^n(t)(x)dt = \int_{\R^d} f'^{(n)}_2(T,x)dm(T)(x) - f'^{(n)}_2(t_0,x_0) \\
&+ \int_{t_0}^T \int_{\R^d} \left[D_pH(t,x,Du_n,D^2u_n).D f'^{(n)}_2 (t,x) + D_MH(t,x,Du_n,D^2u_n).D^2 f'^{(n)}_2(t,x) \right] dm^n(t)(x)dt
\end{align*}
and we can conclude, using the growth assumption on $D_pH$, Assumption \ref{DerGrDpHA}, and the boundness of $\partial_MH$ and $\partial_tH$, that $ \|\partial_t u_n \|_{\infty} \leq C$ for some $C >0$ depending only on $\|Du_n \|_{\infty}$, $\|f'_2 \|_{\infty}$, $\|Df'_2 \|_{\infty}$, $\|D^2f_2' \|_{\infty}$, $\|Dg' \|_{\infty}$, $\|D^2g' \|_{\infty}$ but not on $\|\partial_t f'^{(n)}_2 \|_{\infty}$.

Combining the two above estimates, we can use the stability of viscosity solutions to show that $u_n$ converges locally uniformly to $u$ and that $u$ is therefore a globally Lipschitz function. Following the discussion at the beginning of this section this is enough to conclude that $u$ belongs to $\mathcal{C}_b^{\frac{3+\alpha}{2},3+\alpha}([0,T] \times \R^d)$.

\end{proof}

\appendix

\section{Appendix}

\label{sec: Appendix}

\label{sec: Von-Neumann Theorem and Proof of the dual representation for $H_1^*$ and $H_2^*$}

\noindent Since it appears twice in our article and in particular in the proof of Theorem \ref{Duality} we recall the statement of the Von-Neumann theorem we are using. The statement and proof can be found in the Appendix of \cite{Orrieri2019} and in a slightly different setting, in \cite{Simons1998}.

\begin{theorem}(Von Neumann)
\label{VN}
Let $\mathbb{A}$ and $\mathbb{B}$ be convex sets of some vector spaces and suppose that $\mathbb{B}$ is endowed with some Hausdorff topology. Let $\cL$ be a function satisfying :

$$ a \rightarrow \cL (a,b) \mbox{      is concave in }\mathbb{A} \mbox{ for every }b \in \mathbb{B}, $$
$$ b \rightarrow \cL (a,b) \mbox{      is convex in }\mathbb{B} \mbox{ for every }a \in \mathbb{A}. $$

\noindent Suppose also that there exists $a_* \in \mathbb{A}$ and $C_* > \sup_{a \in \mathbb{A} } \inf_{b \in \mathbb{B}} \cL(a,b)$ such that :

$$ \mathbb{B}_*:= \left \{ b \in \mathbb{B}, \cL (a_*,b) \leq C_* \right \} \mbox{      is not empty and compact in } \mathbb{B}, $$
$$ b \rightarrow \cL(a,b) \mbox{        is lower semicontinuous in } \mathbb{B}_* \mbox{   for every  } a \in \mathbb{A}. $$

\noindent Then,

$$ \min_{b\in \mathbb{B} } \sup_{a \in \mathbb{A} } \cL(a,b) = \sup_{ a \in \mathbb{A} } \inf_{b \in \mathbb{B} } \cL(a,b). $$

\end{theorem}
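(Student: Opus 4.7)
The weak inequality $\sup_a \inf_b \cL(a,b) \leq \inf_b \sup_a \cL(a,b)$ is automatic, so the task is the reverse inequality together with attainment of the infimum on the left. My plan is a finite-intersection-property argument supported by the compactness of $\mathbb{B}_*$, combined with Sion's classical minimax theorem applied on the compact convex product $\Delta^F \times \mathbb{B}_*$ for finite $F \subset \mathbb{A}$, and exploiting the strict slack $C_* > V^* := \sup_a \inf_b \cL(a,b)$.

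For any $V' \in (V^*, C_*)$, let $K_a := \{b \in \mathbb{B}_* : \cL(a, b) \leq V'\}$; each $K_a$ is closed in the compact set $\mathbb{B}_*$ by the lower semicontinuity of $\cL(a, \cdot)$. The aim is to show that $\{K_a\}_{a \in \mathbb{A}}$ enjoys the finite intersection property; compactness then yields a $b' \in \bigcap_{a \in \mathbb{A}} K_a \subset \mathbb{B}_*$ with $\sup_a \cL(a, b') \leq V'$, whence $\inf_{b \in \mathbb{B}} \sup_a \cL(a,b) \leq V'$, and letting $V' \downarrow V^*$ closes the duality gap. Attainment of the minimum on the left follows from the observation that any $b \in \mathbb{B}$ with $\sup_a \cL(a,b) < C_*$ necessarily satisfies $\cL(a_*, b) < C_*$ and hence lies in $\mathbb{B}_*$, combined with the lower semicontinuity of $\sup_a \cL(a,\cdot)$ (as a supremum of lsc functions) on the compact set $\mathbb{B}_*$.

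The key step is the finite intersection property. Fix a finite $F \subset \mathbb{A}$, enlarged if necessary so that $a_* \in F$. Applying Sion's classical minimax theorem to the function $(\mu, b) \mapsto \sum_{a \in F} \mu_a \cL(a, b)$---which is linear (hence concave and convex) in $\mu \in \Delta^F$, and convex and lower semicontinuous in $b \in \mathbb{B}_*$---on the compact convex product $\Delta^F \times \mathbb{B}_*$ yields
\[
 \inf_{b \in \mathbb{B}_*} \max_{a \in F} \cL(a, b) \;=\; \max_{\mu \in \Delta^F} \inf_{b \in \mathbb{B}_*} \sum_{a \in F} \mu_a \cL(a, b).
\]
The inclusion $a_* \in F$ is crucial: any $b \in \mathbb{B} \setminus \mathbb{B}_*$ satisfies $\max_{a \in F} \cL(a, b) \geq \cL(a_*, b) > C_* > V'$, so the sublevel set $\{b \in \mathbb{B} : \max_{a \in F} \cL(a, b) \leq V'\}$ is entirely contained in $\mathbb{B}_*$; in particular, the condition $\inf_{b \in \mathbb{B}_*} \max_{a \in F} \cL(a, b) \leq V'$ is equivalent to $\inf_{b \in \mathbb{B}} \max_{a \in F} \cL(a, b) \leq V'$. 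The remaining task is to bound the right-hand side of the Sion identity by $V^* < V'$, using concavity of $\cL(\cdot, b)$ to collapse the convex combination $\sum_a \mu_a \cL(a, b) \leq \cL(\bar a_\mu, b)$ with $\bar a_\mu := \sum_a \mu_a a \in \mathbb{A}$, and then trading the infimum over $\mathbb{B}_*$ against the infimum over $\mathbb{B}$ (where the definition of $V^*$ applies) via the sublevel-set identification above.

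The most delicate point, and the main obstacle in the argument, is precisely this last exchange: reconciling the infimum over the compact sublevel set $\mathbb{B}_*$ with the infimum over the full non-compact $\mathbb{B}$ at the finite-dimensional minimax step. This exchange makes essential use of both the strict slack $C_* > V^*$ and the forced inclusion $a_* \in F$, which together prevent any escape from $\mathbb{B}_*$ at levels below the critical threshold $V'$. Once this identification is justified, the finite intersection property, hence the full minimax identity with attainment of the minimum, follows by routine compactness and Sion's theorem.
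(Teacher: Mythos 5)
The paper offers no proof of this theorem; it simply cites the appendix of \cite{Orrieri2019} and \cite{Simons1998}. So I will assess your proposal on its own merits.

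Your overall architecture (weak inequality for free; finite-intersection property over the compact set $\mathbb{B}_*$; existence of the minimizer from lower semicontinuity) is right, and you correctly identify the one genuinely delicate step. But that step is not merely ``to be justified'': as you have set it up, it is wrong, and the mechanism you gesture at cannot repair it. After applying Sion on $\Delta^F\times\mathbb{B}_*$ and collapsing $\sum_a\mu_a\cL(a,b)\le\cL(\bar a_\mu,b)$, you need a bound of the form $\min_{b\in\mathbb{B}_*}\cL(\bar a_\mu,b)\le V^*$, whereas the definition of $V^*$ only gives $\inf_{b\in\mathbb{B}}\cL(\bar a_\mu,b)\le V^*$. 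The sublevel-set identification you invoke (``$a_*\in F$ plus strict slack prevents escape from $\mathbb{B}_*$'') applies to $\max_{a\in F}\cL(a,\cdot)$, whose sublevel sets below $C_*$ are forced into $\mathbb{B}_*$ precisely because they contain the constraint $\cL(a_*,\cdot)\le C_*$. It does \emph{not} apply to the single convex function $\cL(\bar a_\mu,\cdot)$: near-minimizers of $\cL(\bar a_\mu,\cdot)$ over $\mathbb{B}$ can freely have $\cL(a_*,\cdot)>C_*$, so there is no reason for $\inf_{\mathbb{B}_*}\cL(\bar a_\mu,\cdot)$ to be close to $\inf_{\mathbb{B}}\cL(\bar a_\mu,\cdot)$. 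The strict slack $C_*>V^*$ and the inclusion $a_*\in F$ do nothing here because, once you have collapsed the combination, $a_*$ is no longer distinguished in the expression $\cL(\bar a_\mu,b)$.

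The correct fix is to carry out the finite-dimensional minimax on $\Delta^F\times\mathbb{B}$ \emph{without} compactifying the $b$-side first, using a one-sided minimax theorem (Fan's 1953 theorem for convexlike/concavelike functions, which requires semicontinuity only on the compact $\mu$-side) or, equivalently, a direct Hahn--Banach separation in $\R^F$. Concretely: if $(V^*+\epsilon,\dots,V^*+\epsilon)$ were not in the closure of the convex set $\bigl\{(\cL(a,b))_{a\in F}+\R_+^F:\ b\in\mathbb{B}\bigr\}$, a separating functional $\mu$ would be nonnegative (by the $\R_+^F$-invariance), could be normalized to lie in $\Delta^F$, and would give $\sum_a\mu_a\cL(a,b)\ge V^*+\epsilon$ for all $b\in\mathbb{B}$; but then $\inf_{\mathbb{B}}\cL(\bar a_\mu,\cdot)\ge V^*+\epsilon>V^*$ by concavity in $a$, contradicting the definition of $V^*$. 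This yields $\inf_{b\in\mathbb{B}}\max_{a\in F}\cL(a,b)\le V^*$ directly, with the infimum taken over the full $\mathbb{B}$. Only now should you use compactness and the sublevel-set identification: any approximate minimizer $b_\epsilon$ with $\max_{a\in F}\cL(a,b_\epsilon)\le V^*+\epsilon<C_*$ lies in $\mathbb{B}_*$, so FIP holds for the closed sets $K_a=\{b\in\mathbb{B}_*:\cL(a,b)\le V^*+\epsilon\}$, and compactness of $\mathbb{B}_*$ together with lower semicontinuity finishes the argument exactly as you outline. The moral is that the separation/Fan step must be done on all of $\mathbb{B}$, exploiting that it needs no topology there; Sion's theorem, which demands semicontinuity on both sides and hence pushes you onto $\mathbb{B}_*$, gives you a right-hand side you cannot control.
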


\begin{remark} The fact that the infimum in the “$\inf \sup$" problem is in fact a minimum is part of the theorem.
\end{remark}

\paragraph{Acknowledgments} The author thanks the anonymous referees for their comments and careful proofreading of the paper.

The author was partially supported by the ANR (Agence Nationale de la Recherche) project ANR-16-CE40-0015-01 on Mean Field Games. Part of this research was performed while the author was visiting the Institute for Mathematical and Statistical Innovation (IMSI), which is supported by the National Science Foundation (Grant No. DMS-1929348).

The author wishes to thank Professor Pierre Cardaliaguet (Paris Dauphine) for fruitful discussions all along this work.

\bibliographystyle{plain}
\bibliography{/Users/sam/Documents/Bibtex/DraftPaper1.bib}

\end{document}